\theoremstyle{plain}
\newtheorem{thm}{Theorem}[section]
\newtheorem{prop}[thm]{Proposition}
\newtheorem{cor}[thm]{Corollary}
\newtheorem{lem}[thm]{Lemma}
\theoremstyle{definition}
\newtheorem{exa}[thm]{Example}
\newtheorem{rem}[thm]{Remark}
\newtheorem{defn}[thm]{Definition}
\newtheorem{prob}[thm]{Problem}
\def\det{\mathop{\mathrm{det}}\nolimits}
\def\Im{\mathop{\mathrm{Im}}\nolimits}
\def\Ker{\mathop{\mathrm{Ker}}\nolimits}
\def\Coker{\mathop{\mathrm{Coker}}\nolimits}
\def\Hom{\mathop{\mathrm{Hom}}\nolimits}
\def\Ext{\mathop{\mathrm{Ext}}\nolimits}
\def\Tor{\mathop{\mathrm{Tor}}\nolimits}
\def\End{\mathop{\mathrm{End}}\nolimits}
\newcommand{\tri}{{ \lhd}}
\newcommand{\lra}{\longrightarrow}
\newcommand{\ra}{\rightarrow}
\newcommand{\Q}{{\Bbb Q}}
\newcommand{\R}{{\Bbb R}}
\newcommand{\Z}{{\Bbb Z}}
\newcommand{\As}{{\rm As }}
\newcommand{\D}{{\cal D}_{g}}
\newcommand{\DD}{{{\cal D}^{\rm ns}_{g}}}
\newcommand{\M}{{\cal M}}
\newcommand{\mm}{\overline{\bf{m} }}
\newcommand{\pc}[2]{\mbox{$\begin{array}{c}
\includegraphics[scale=#2]{#1.eps}
\end{array}$}}
\begin{document}
\large
\begin{center}
{\bf\Large Bilinear-form invariants of Lefschetz fibrations over the 2-sphere}
\end{center}
\vskip 1.5pc
\begin{center}
{\Large Takefumi Nosaka}\end{center}\vskip 1pc\begin{abstract}\baselineskip=13pt \noindent
We introduce invariants of, in general, Hurwitz equivalence classes with respect to arbitrary group $G$.
The invariants are constructed from any right $G$-modules $M$ and any $G$-invariant bilinear function on $M$, and are of bilinear forms.
For instance, when $G$ is the mapping class group of the closed surface, $\M_g$, we get an invariant of 4-dimensional Lefschetz fibrations over the 2-sphere.
Moreover, the construction is applicable for the quantum representations of $\M_g $
derived from Chern-Simons field theory.
We also see that our invariant is unstable with respect to fiber sum of Lefschetz fibrations.

\end{abstract}

\begin{center}
\normalsize
{\bf Keywords}
\baselineskip=12pt
\ \ \ bilinear form, 4-dimensional Lefschetz fibration, \ \ \ \\
\ \ \ mapping class group, monodromy, link \ \ \
\end{center}
\large
\baselineskip=16pt
\section{Introduction}
A (genus-$g$) Lefschetz fibration \cite{Kas,Matsumoto} is 
a smooth map $\pi : E \ra S^2$ from a closed smooth 4-manifold $E$
that is a $\Sigma_g$-fiber bundle projection away from finitely many singular points.
Here, $\Sigma_g $ is the closed surface of genus $g$, and the map $\pi$ near the singular points is required to appear in appropriate oriented local complex coordinates as $\pi(z_1,z_2)=z_1z_2.$
Lefschetz fibrations provide a topological method available for studying 4-dimensional geometry,
as in complex surfaces, symplectic structures, and Stain surfaces.
Further,
a careful observation of tubular neighborhoods around the singular fibers
develops the close relation to
3-dimensional contact geometry and the mapping class group, $\M_g$, of $\Sigma_g $; see an explanatory book \cite{OS} for details. 


We now explain an approach to Lefschetz fibrations, as a general setting from Hurwitz equivalence problem.
Let $G$ be a group with identity $1_G$, and let a subset $Z \subset G$ be closed under conjugation. For $m \in \mathbb{N}$,
consider the quotient set, $ {\rm Hur}^m(Z) $, of the set
$$\{ (z_1, \dots, z_m ) \in Z^m | \ z_1 \cdots z_m =1_G\ \}$$
modulo the following relations:
\begin{equation}\label{Hureq1} (z_1, z_2, \dots, z_m ) \sim ( z^{-1}z_1 z, z^{-1}z_2 z, \dots, z^{-1} z_m z ) , \end{equation}
\begin{equation}\label{Hureq2} (z_1, z_2, \dots, z_m ) \sim (z_1, \dots, z_{i-1} ,z_{i+1}, z_{i+1}^{-1}z_i z_{i+1} , z_{i+2}, \dots, z_m ) , \end{equation}
for any $1 \leq i < m$ and $z \in Z$.
An element of this set ${\rm Hur}^m(Z)$ is called a {\it Hurwitz equivalence class}, and
is roughly viewed as a monodromy characterized by $G$
over the 2-sphere $S^2$ with $m$-points removed.
This viewpoint
emphasised the importance of studying $ {\rm Hur}^m(Z) $;
As examples in topology, in the case where $G$ is the mapping class group $\M_g $ (resp. the braid group $B_n$),
the associated set $ {\rm Hur}^m(Z) $ is bijective to the set of isomorphism classes of Lefschetz fibrations (resp. of simple surface-braids); see, e.g., \cite{Kas,Matsumoto,PY,Kam}, Examples \ref{LFrei} and \ref{surfacebraidrei}.
Nevertheless, for any group $G$, there are few invariants with respect to the Hurwitz equivalence, although the definition is seemingly simple.
For instance, even concerning Lefschetz fibrations related to symplectic geometry,
most of useful approaches are (geometric) characteristic classes,
group-theoretic study for the monodromy; see, e.g, \cite{EK,EN,FS,Ozb,Sato,PY}
(Refer to a recent work \cite{EK} from ``chart diagrams").

In this paper, for such any pair $Z \subset G$,
we generally introduce an algorithm to provide invariants of Hurwitz equivalence classes $ {\rm Hur}^m(Z) $.
The idea follows from the Hopf fibration $ S^3 \ra S^2 $.
Since the preimage of $m$-points on $S^2$ is the $(m,m)$-torus link $T_{m,m}$ in $S^3$,
our invariants are roughly defined as link-invariants of the link $T_{m,m}$.
To be precise, we show that (Lemma \ref{aa53}) any invariant of representations of link groups
yields an invariant of $ {\rm Hur}^m(Z) $.
Thus, for concrete computations and applications of such invariants,
this paper mainly employs the link-invariant $\mathcal{Q}_{\psi}$ which was defined from cup products 
(see \S \ref{ss211} for the reason why this paper does not use other standard link-invariants).
The interesting point is that this  $\mathcal{Q}_{\psi}$ is elementally constructed from any
$G$-module $M$ and any $G$-invariant bilinear function $\psi$, and is valued as a bilinear form.
In conclusion, we obtain bilinear forms from $ {\rm Hur}^m(Z) $, 
as mentioned in the abstract above.




For their applications,
this paper mainly focuses on Lefschetz fibrations over $S^2$.
As mentioned above,
we shall let $G$ be the mapping class group $\mathcal{M}_g$.
Then,
by virtue of bilinear form theory (see \cite{HM}) and the help of computer,
we can compute the invariants of many Lefschetz fibrations of low genus fibers
(see \S \ref{ssg2}). 
Furthermore, in \S \ref{ss2119}-\ref{ssg2},
we study properties of the bilinear forms, and obtain some interesting results as follows.

As the simplest case, in \S \ref{ssg2}, we study the resulting bilinear form $\mathcal{Q}_{\psi}$
obtained from the standard symplectic representation of $\M_g$, that is, we set $M:= H^1(\Sigma_g;A )$ and the symplectic form $\psi$ on $M$.
We then analyze the rational part and torsion part in turn.
First, in rational case $A=\Z$, one will show the unimodularity of the form $\mathcal{Q}_{\psi}$ (Theorem \ref{aa333c}), and show that
the signature of $\mathcal{Q}_{\psi}$ is essentially equal to the signature of the total space of the Lefschetz fibration.
As a result, we give a simpler formula for computing 4-dimensional signature (This formula is a modification of Meyer 2-cocycle \cite{Meyer};
see Appendix \ref{asss303k1} for details).
On the other hand, if $A$ is the torsion group $\Z/P$ for some prime $P \in \Z$,
the resulting form $\mathcal{Q}_{\psi}$ is not topological, but capture some group theoretic fiber-structures of Lefschetz fibrations.
A typical evidence is that the form $\mathcal{Q}_{\psi}$ seems something unstable:
Precisely, it does not always have the additivity with respect to (twist) fiber sums of Lefschetz fibrations;
seen Table \ref{G3} and Propotision \ref{vbbw}.
Since the sum is a powerful tool for constructing new symplectic 4-manifolds as in \cite{FS,PY}),
we hope a usefulness of the torsion invariant. 

\

Moreover, we emphasize that the quantum representation is applicable to the setting of our invariants. 
Witten \cite{Wi} had made a prophetic discovery that
the Chern-Simons quantum field theory of level $k$ 
produces 3-manifold invariants and representations of $\mathcal{M}_g $;
afterward the prophecy in some cases are mathematically formulated from several branches,
e.g., conformal field theory.
Here we remark that the formulation has a difficulty that, by an obstacle of ``$p_1$-structures (or 2-framing anomaly"; see \cite{BHMV,Wi}),
the resulting representation is a right module not of $\mathcal{M}_g$,
but of a $\Z$-central extension of $\mathcal{M}_g$. 
However, starting from the study of such extensions (see \cite{Nos1} for its finite presentation),
we will apply the quantum representation to our invariants.
Furthermore, the associated bilinear forms of Lefschetz fibrations are computable in some cases.
For instance, in \S \ref{s3s331wlw},
we compute the associated invariants with $SU(2)$-gauge of level 2 (i.e., a quantization of
the spin structure on $\Sigma_g$); see Table \ref{G3} for the computation.
As a result, 
we can
detect some pairs of non-isomorphic Lefschetz fibrations (Proposition \ref{vbbw});
however, 
unfortunately, the invariants seem not so strong, in contrast to the Jones knot-polynomial.


With respect to other gauges of higher level, the bilinear forms $\mathcal{Q}_{\psi}$ arising from the quantum representations are entirely mysterious.
However, compared with a TQFT-like physical interpretation as in the Donaldson invariant,
it may be worth for the futures to hope
that the philosophy of Chern-Simons perturbative \cite{Wi} would partially produce invariants of 4-manifolds with somewhat structures,
and that Lefschetz fibrations would be good models which are affected by few 3-dimensional quantum obstructions.






\

\noindent
{\bf Conventional notation}.
Throughout
this paper, we write $G$ for a group, and do $Z$ for a subset of $G$ as above.
Furthermore {\it an $m$-tuple (of $Z$)} is $(z_1, \dots, z_m ) \in Z^m$ with $z_1\cdots z_m=1$,
and is denoted by $\mathbf{z}$.
We denote by $\Sigma_{g,r}$ the oriented closed surface of genus $g \geq 2$ with $r$-boundaries.
By $A$ we mean a commutative ring with identity and with involution $\bar{}:A \ra A$.


\section{From Hopf fibration to Hurwitz equivalence classes}\label{ss211}
We will describe explicitly an algorithm to get something invariant with respect to the Hurwitz equivalence relation; see the introduction for the definition of ${\rm Hur}^m(Z) $.

For this purpose, we start with a short review of the Hopf fibration $\mu:S^3 \ra S^2$ with fiber $S^1$. The fibration is
formulated by the restriction on $S^3$ of the following map:
$$ \mu: \mathbb{C}^2 \lra \mathbb{C} \times \R, \ \ \ (z,w) \longmapsto ( 2 z \bar{w}, \ |z|^2-|w|^2). $$
Then we can easily see, by definitions, that the preimage of $m$-points, $\{ b_1, \dots, b_m\} \subset S^2$, is the $(m,m)$-torus link $T_{m,m}$.
Considering the Euler class, we can find a trialization of the $S^1$-bundle restricted on the complementary space $S^2 \setminus \{ b_1, \dots, b_m \} $,
that is, a homeomorphism
$$S^3 \setminus T_{m,m} \cong S^1 \times (S^2 \setminus \{ b_1, \dots, b_m \} ).$$
In particular, $ \pi_1(S^3 \setminus T_{m,m}) \cong \Z \times F_{m-1}$, where $F_{m-1}$ is the free group of rank $m-1$. Hence
\begin{prop}\label{aa51433}
The complementary space $S^3 \setminus T_{m,m}$ is an Eilenberg MacLane space of $\Z \times F_{m-1}$.
\end{prop}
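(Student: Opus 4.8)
The plan is to exploit the homeomorphism $S^3 \setminus T_{m,m} \cong S^1 \times (S^2 \setminus \{ b_1, \dots, b_m \})$ recorded just above, and to recognise each factor as an aspherical space, so that the product is automatically a $K(\pi,1)$ with $\pi$ the product of the two fundamental groups.

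First I would treat the punctured sphere. Placing one of the $b_i$ at infinity identifies $S^2 \setminus \{ b_1, \dots, b_m\}$ with $\R^2$ minus $m-1$ points, which deformation retracts onto a wedge $\bigvee_{m-1} S^1$ of $m-1$ circles. A wedge of circles is a connected graph, hence its universal cover is a tree and therefore contractible; so $\bigvee_{m-1} S^1$ is a $K(F_{m-1},1)$. Since asphericity and the isomorphism type of $\pi_1$ are homotopy invariants, $S^2 \setminus \{ b_1, \dots, b_m\}$ is itself a $K(F_{m-1},1)$. The remaining factor $S^1$ is the standard $K(\Z,1)$, its universal cover being the contractible line $\R$.

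Finally I would assemble the product. For pointed spaces one has $\pi_n(X \times Y) \cong \pi_n(X) \times \pi_n(Y)$ in every degree $n \geq 1$; applied to $X = S^1$ and $Y = S^2 \setminus \{ b_1, \dots, b_m\}$ this kills all higher homotopy and yields fundamental group $\Z \times F_{m-1}$, so the product --- and hence $S^3 \setminus T_{m,m}$ --- is a $K(\Z \times F_{m-1},1)$, in agreement with the computation $\pi_1(S^3 \setminus T_{m,m}) \cong \Z \times F_{m-1}$ already noted. I do not expect a genuine obstacle: each ingredient is elementary algebraic topology. The only step needing a little care is the explicit retraction of the punctured sphere onto $\bigvee_{m-1} S^1$ (equivalently, exhibiting a contractible universal cover, here $\R \times (\text{tree})$), but this is routine.
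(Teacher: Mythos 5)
Your argument is correct and follows essentially the same route as the paper: both start from the trivialization $S^3 \setminus T_{m,m} \cong S^1 \times (S^2 \setminus \{ b_1, \dots, b_m \})$ and deduce asphericity from that of the two factors. You merely make explicit the standard details (punctured sphere retracts to a wedge of circles, universal covers are contractible, $\pi_n$ of a product splits) that the paper compresses into the single word ``Hence.''
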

\noindent
Moreover, the Wirtinger presentation yields a presentation of $\pi_1(S^3 \setminus T_{m,m})$ as follows:
\begin{equation}\label{aa13}
\langle \ a_1, \dots, a_m \ | \ a_1 \cdots a_m = a_2 \cdots a_m a_1 = \cdots = a_m a_1 a_2 \cdots a_{m-1} \ \rangle .
\end{equation}
Here, $a_i$ corresponds to the meridian associated with the arc $\alpha_i$ in Figure \ref{tg222},
and the product $ a_1 \cdots a_m$ generates the summand $\Z \subset \Z \times F_{m-1}$.

Next, to explain Lemma \ref{aa53} below, we now set up terminologies.
Wirtinger presentation in knot theory implies that every homomorphism from every link group to the group $G$ is formulated
as a map $\{ \textrm{arcs of } D\} \ra G $. Let us consider the set of such maps:
$$ \mathfrak{LHom}_{Z}:= \Bigl\{ \ \gamma : \{ \textrm{arcs of } D\} \ra Z \ \Bigl| \
\begin{array}{c}
\!\!\!\!\!\!\!\!\!\!\! D \ \textrm{is a link diagram of some link }L \subset S^3, \textrm{and} \\
\textrm{this } \gamma \ \textrm{defines a homomorphism } \pi_1(S^3 \setminus L ) \ra G.
\end{array} \Bigr\} .$$
Further, we can equip $\mathfrak{LHom}_{Z}$ with the equivalence relation
by considering Reidemeister moves.

Furthermore, we discuss the link-diagram $D$ of $T_{m,m} $ in the left hand side of Figure \ref{tg222}.
Here, any entry of the linking matrix is $1$, and the band in Figure \ref{tg222} means the $(m-2)$-parallel strands, and
the blackline is the $i$-th strand and the dotted line indicates the $(i+1)$-th strand.
Then, given an $m$-tuple $(z_1, \dots, z_m) \in Z^m$ with $z_1\cdots z_m =1 $,
the assignment $f_{\mathbf{z}}(a_i)=z_i$ defines a homomorphism $ f_{\mathbf{z}}: \pi_1(S^3 \setminus T_{m,m}) \ra G$ according to \eqref{aa13}.
Hence we have a map
$$ \mathcal{Z}: \{ \ \mathbf{z}=(z_1, \dots, z_m) \in Z^m \ | \ z_1\cdots z_m =1 \ \} \lra \mathfrak{LHom}_{Z} ; \ \ \ \ \mathbf{z} \mapsto f_{\mathbf{z}}.$$
\begin{lem}\label{aa53} Let $\mathcal{S} $ be a set, and take a map $ \mathcal{I}: \mathfrak{LHom}_{Z} \ra \mathcal{S} $.
Assume that $ \mathcal{I}$ is invariant with respect to Reidemeister moves of type II and III, and assume that
if two homomorphisms $f_{\mathbf{z}}$ and $ f_{\mathbf{z}}': \pi_1(S^3 \setminus L ) \ra G$ coming from $ \mathfrak{LHom}_{Z} $ are conjugate, $\mathcal{I}(f_{\mathbf{z}})= \mathcal{I}(f_{\mathbf{z}}') $.

Then, the composite $ \mathcal{Z} \circ \mathcal{I} $ induces a map
$ \mathrm{Hur}_m(Z) \ra \mathcal{S}$ by passage to the relations \eqref{Hureq1} \eqref{Hureq2}. 
\end{lem}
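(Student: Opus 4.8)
The plan is to verify that the composite $\mathbf{z}\mapsto \mathcal{I}(f_{\mathbf{z}})$ is constant on each Hurwitz class, i.e. that it is left unchanged by both generating relations \eqref{Hureq1} and \eqref{Hureq2}. Once this is shown, a function invariant under each generating relation is invariant under the equivalence they generate, so the universal property of the quotient set immediately produces the asserted map $\mathrm{Hur}_m(Z)\ra \mathcal{S}$.

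First I would dispose of the simultaneous-conjugation relation \eqref{Hureq1}, which is purely algebraic. Writing $\mathbf{z}'=(z^{-1}z_1 z,\dots,z^{-1}z_m z)$, the defining rule $f_{\mathbf{z}}(a_i)=z_i$ gives $f_{\mathbf{z}'}(a_i)=z^{-1}z_i z=z^{-1}f_{\mathbf{z}}(a_i)z$ on every Wirtinger generator of \eqref{aa13}, hence $f_{\mathbf{z}'}=c_z\circ f_{\mathbf{z}}$, where $c_z$ is the inner automorphism $g\mapsto z^{-1}gz$ of $G$. Thus $f_{\mathbf{z}}$ and $f_{\mathbf{z}'}$ are conjugate homomorphisms out of $\pi_1(S^3\setminus T_{m,m})$, and the conjugation hypothesis on $\mathcal{I}$ yields $\mathcal{I}(f_{\mathbf{z}})=\mathcal{I}(f_{\mathbf{z}'})$ at once.

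The substance of the argument is the Hurwitz relation \eqref{Hureq2}. Here I would realize the move $(\dots,z_i,z_{i+1},\dots)\mapsto(\dots,z_{i+1},z_{i+1}^{-1}z_i z_{i+1},\dots)$ geometrically, as the diagram operation that slides the $i$-th strand (the blackline of Figure \ref{tg222}) past the $(i+1)$-th strand (the dotted line) through the band of $(m-2)$ parallel strands and back into standard position. The key claim is that this isotopy of $T_{m,m}$ is realized on diagrams by a finite sequence of Reidemeister moves of types II and III only, with no type I needed, since the slide preserves each pairwise linking number and creates no self-kink, consistent with every entry of the linking matrix being $1$. I would then read off, crossing by crossing via the Wirtinger rules, the effect on the arc labels: the dotted strand takes over position $i$ with its label $z_{i+1}$ unchanged, while the black strand, after passing under the dotted one, acquires the conjugated label $z_{i+1}^{-1}z_i z_{i+1}$ at position $i+1$, all remaining labels being untouched. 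This identifies the relabelled diagram with the element $f_{\mathbf{z}'}\in\mathfrak{LHom}_Z$ attached to the Hurwitz-moved tuple, so the type II/III invariance of $\mathcal{I}$ gives $\mathcal{I}(f_{\mathbf{z}})=\mathcal{I}(f_{\mathbf{z}'})$.

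I expect the main obstacle to be exactly the explicit diagrammatic bookkeeping in this last step: producing the concrete finite sequence of type II and III moves that transports the slid strand across the parallel band, and checking that the accumulated Wirtinger conjugations telescope to precisely the single conjugation by $z_{i+1}$ demanded by \eqref{Hureq2}, rather than some longer word. Verifying that no type I move sneaks in is the delicate point, since the lemma assumes invariance only under moves II and III. Once both relations are confirmed invariant, they generate the Hurwitz equivalence by definition, and the composite descends to $\mathrm{Hur}_m(Z)\ra\mathcal{S}$, completing the proof.
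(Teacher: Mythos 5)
Your proposal is correct and follows essentially the same route as the paper: relation \eqref{Hureq1} is handled by the conjugation hypothesis, and relation \eqref{Hureq2} by realizing the strand exchange on the diagram of $T_{m,m}$ through Reidemeister moves of types II and III only, reading off the conjugated Wirtinger label. The diagrammatic bookkeeping you flag as the delicate point is exactly what the paper delegates to Figure \ref{tg222}.
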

\begin{proof}
It suffices to check the invariance with respect to the relations \eqref{Hureq1} \eqref{Hureq2}.
Since the former \eqref{Hureq1} is clear by assumption, we will discuss another \eqref{Hureq2}.
To this end, consider another diagram $D'$ obtained from the above $D$ by exchanging the $i$-th strand for the $(i+1)$-th one (see the right of Figure \ref{tg222});
Notice that
$D$ is related to $D'$ by a finite sequence of Reidemeister moves of type II and III (see Figure \ref{tg222}). 
Therefore, if two $m$-tuples $\mathbf{z}$ and $ \mathbf{z}' $ are related by \eqref{Hureq2},
then the equality $ \mathcal{I} (f_{\mathbf{z}}) = \mathcal{I} ( f_{\mathbf{z}'})$ results from the assumption.
Hence, we complete the proof.
\end{proof}

\vskip -0.5pc
\begin{figure}[h]
$$
\begin{picture}(200,100)
\put(-125,35){\pc{mmmtorus}{0.2188125}}

\put(233,35){$\sim $}
\put(104,35){$\sim $}
\put(-16,35){$\sim $}

\put(-136,65){\Large $\alpha_{i+1} $}
\put(-126,52){\Large $\alpha_{i} $}

\put(-23,75){\LARGE $D $}
\put(242,75){\LARGE $D'$}

\put(118,35){\pc{mmmmtorus}{0.2188125}}


\end{picture}
$$

\

\vskip -0.5pc
\caption{\label{tg222} The exchange between the $i$-th strand and the $(i+1)$-th one of the $(m,m)$-torus link.}
\end{figure}
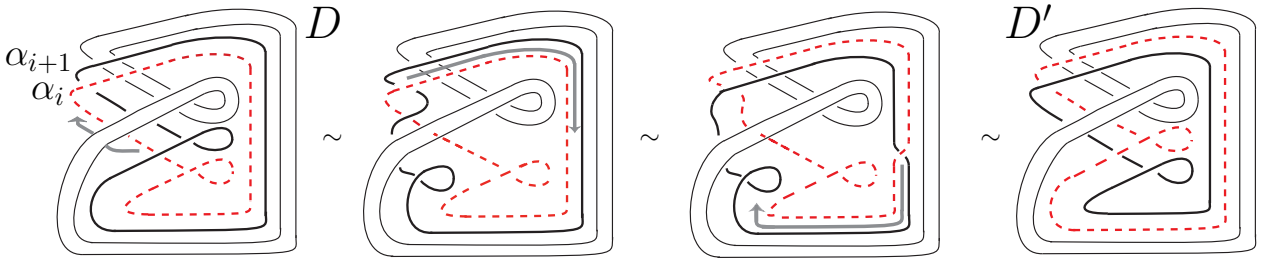
To summarize, every invariant of representations of link groups
produces that of $ \mathrm{Hur}_m(Z) $.

\begin{rem}\label{aa5312}
Here, we remark such invariants arising from group cohomology, 
as in twisted Alexander polynomials (modules) and the (complex) volume. 
However, Proposition \ref{aa51433} implies that
the invariants factor through the group cohomology of $\Z \times F_{m-1}$ which is entirely simple.
In fact, the author found no non-trivial examples in computer experience;
in most cases, the resulting invariants of $ \mathrm{Hur}_m(Z) $ are expected to be trivial.


To solve such triviality, this paper will employ the link-invariants, which are obtained from cup products with quandle theory and were introduced in \cite{Nos3}.
In fact, we will see (Proposition \ref{isnsdvdef}) that the Hopf fibration gives rise to 
an isomorphism between ``the fundamental quandles" of $S^3\setminus T_{m,m}$ and of $S^2 \setminus \{ b_1, \dots, b_m \}$.
Thus, some approach using quandle theory seem reasonable to accomplish Lemma \ref{aa53}, 
In summary, from the viewpoint of quandle theory, such invariants from $S^2 \setminus \{ b_1, \dots, b_m \}$ are equivalent to those from $S^3\setminus T_{m,m}$.
\end{rem}


\section{Invariants as bilinear forms}\label{ss21}
According to Remark \ref{aa5312} from quandle theory,
Section \ref{ss211} briefly formulates the invariants of linear forms.
After that, in \S\S \ref{ss2119} and \ref{ss21192}, we will state some properties of the bilinear forms.

\subsection{Definition of the invariants as bilinear forms}\label{ss211}
In this subsection, we introduce invariants, which are of bilinear forms up to base change, with respect to the Hurwitz equivalence. 
As explained in Proposition \ref{aa1133c}, these invariants are roughly something like the cohomology pairing from cup products on $D^2$. 

We now review some results in the preceding paper \cite{Nos3}. The author introduced invariants,
which are bilinear forms, with respect to representations $\pi_1(S^3 \setminus L) \ra G$. 
This invariant is generally constructed from any right $\mathrm{As}(Z)$-module $M$ over a ring $A $ and any $\mathrm{As}(Z)$-invariant bilinear function $\psi: M^{\otimes 2} \ra A $.
Here $\mathrm{As}(Z)$ is the abstract group generated by $e_z$ with $z \in Z$ subject to the relation $e_{w^{-1}zw }= e_{w}^{-1}e_z e_w $,
and is called {\it the associated group} \cite{Joy}.
Note that the inclusion $Z \hookrightarrow G$ gives rise to a group homomorphism
\begin{equation}\label{bba} \mathcal{E}: \As(Z) \lra G, \end{equation}
which makes any right $G$-module into a right $\As(Z)$-module. 

We will explicitly formulate the invariants of the torus links as follows.
For an $m$-tuple $\mathbf{z}= (z_1, \dots, z_m ) \in Z^m$ and for $k \leq m $,
we set up an $A$-linear map $\Gamma_{\mathbf{z},k} : M^m \ra M$ defined by
\begin{equation}\label{aac}\Gamma_{\mathbf{z},k} ( x_1, \dots, x_m ):= ( x_{k-1} -x_{k} ) + \sum_{j=k}^{m+k-2 }( x_j -x_{j+1} ) \cdot e_{z_{j+1}}e_{ z_{j+2}} \cdots e_{z_{m+k-1}} \in M.
\end{equation} 
Here the indices are of period $m$, e.g., $x_{m+s}:= x_s \in M $.
Let us denote the kernel $\mathrm{Ker}(\oplus_k \Gamma_{\mathbf{z},k }) $ by $\mathrm{Ker}( \Gamma_{\mathbf{z} }) $.
Here remark that $\mathrm{Ker}( \Gamma_{\mathbf{z} }) $ is always not an $\As(Z)$-submodule, but an $A$-submodule.
\begin{defn}[{\cite[\S 4.3]{Nos3}}]\label{aa11c} 
Let $A, \ M, \ \mathbf{z} \in Z^m$ be as above. 
Let $\psi: M^2 \ra A $ be an $\As(Z)$-invariant $A$-linear function. That is, 
$\psi$ is bilinear over $\Z$ and satisfies the equalities
$$ \psi (a x ,b y )= \bar{a} b \psi (x ,y ), \ \ \ \ \ \ \ \ \ \ \ \psi (x \cdot e_z , y \cdot e_z )= \psi (x ,y ), $$
for any $x,y \in M, \ a,b \in A$ and $z \in Z$.
Then, for $ \ell \in \Z_{>0}$, we consider the $A$-bilinear form $\mathcal{Q}_{\psi, \ell }: \mathrm{Ker}(\Gamma_{\mathbf{z}} )^{\otimes 2} \ra A$
that takes $ (x_1, \dots, x_m) \otimes(y_1, \dots, y_m) $ to 
\begin{equation}\label{bbbdd}
\sum_{k=1}^{m-1 } \psi \bigl( x_{k+\ell -1}-x_{k +\ell }+ \sum_{j=1}^{k -1} (x_{j+\ell -1}-x_{j +\ell })\cdot e_{z_{j+\ell }}e_{ z_{j+\ell +1}} \cdots e_{z_{ k +\ell-1}} ,\ y_{k +\ell} \cdot (1 -e_{z_{k +\ell}}^{-1}) \bigr) \in A . \end{equation}
\end{defn}
It is worth noting that the linear forms are invariant with respect to conjugacy operations \cite[Corollary 2.4]{Nos3}.
Hence, according to Lemma \ref{aa53}, we immediately conclude the following:
\begin{thm}\label{aa4221c}
The correspondence from $ (z_1, \dots, z_m) \in Z^m $ to the bilinear form $ \mathcal{Q}_{\psi, \ell}$ 
up to base change gives rise to an invariant of the Hurwitz equivalence classes.
\end{thm}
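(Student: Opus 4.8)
The plan is to realize Theorem~\ref{aa4221c} as a direct instance of Lemma~\ref{aa53}. First I would fix the target set $\mathcal{S}$ to be the collection of $A$-bilinear forms modulo base change (congruence), and define the map $\mathcal{I}\colon \mathfrak{LHom}_{Z}\ra \mathcal{S}$ by sending a homomorphism $\gamma\colon \pi_1(S^3\setminus L)\ra G$, presented via a diagram $D$, to the congruence class of the bilinear form $\mathcal{Q}_{\psi,\ell}$ built from $\gamma$, the module $M$, and the pairing $\psi$ as in Definition~\ref{aa11c}. The point to record here is that formula~\eqref{bbbdd} is merely the specialization of the general diagrammatic construction of \cite{Nos3} to the standard diagram $D$ of the torus link $T_{m,m}$; so $\mathcal{I}$ is defined on all of $\mathfrak{LHom}_{Z}$, and the composite $\mathcal{I}\circ\mathcal{Z}$ recovers exactly the assignment $\mathbf{z}\mapsto\mathcal{Q}_{\psi,\ell}$ of the statement.

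Next I would verify the two hypotheses of Lemma~\ref{aa53} for this $\mathcal{I}$. The invariance of the congruence class of $\mathcal{Q}_{\psi,\ell}$ under Reidemeister moves of type~II and~III is precisely the assertion, proved in \cite{Nos3}, that $\mathcal{Q}_{\psi,\ell}$ is a well-defined link invariant of representations up to base change; the phrase ``up to base change'' is essential, since under these moves both the ambient module and the subspace $\mathrm{Ker}(\Gamma_{\mathbf{z}})$ on which the form lives are altered, and it is only their induced congruence class that is preserved. The second hypothesis, invariance under conjugation of the representation $f_{\mathbf{z}}$, is supplied by \cite[Corollary 2.4]{Nos3}. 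With both hypotheses in hand, Lemma~\ref{aa53} immediately yields that $\mathcal{I}\circ\mathcal{Z}$ descends through the relations \eqref{Hureq1} and \eqref{Hureq2} to a well-defined map $\mathrm{Hur}_m(Z)\ra\mathcal{S}$, which is the desired invariant.

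Since the substantive invariance computations are imported wholesale from \cite{Nos3}, the only genuine work internal to this paper is bookkeeping: confirming that the explicit kernel $\mathrm{Ker}(\Gamma_{\mathbf{z}})$ and formula~\eqref{bbbdd} attached to the distinguished diagram of $T_{m,m}$ agree with the general construction, and that the transposition of the $i$-th and $(i+1)$-th strands realizing relation~\eqref{Hureq2} is indeed effected by type~II and~III moves (as recorded in Figure~\ref{tg222}). The main conceptual obstacle, were one to reprove rather than cite it, would be the Reidemeister~II/III invariance up to base change, which requires exhibiting, for each local move, an explicit isomorphism of the two kernels intertwining the respective pairings; but this is exactly what \cite{Nos3} establishes, so here the theorem follows at once.
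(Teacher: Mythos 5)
Your proposal is correct and follows essentially the same route as the paper: the theorem is deduced by applying Lemma~\ref{aa53} to the bilinear-form link invariant of \cite{Nos3}, with Reidemeister II/III invariance and conjugation invariance (the latter via \cite[Corollary 2.4]{Nos3}) supplying the two hypotheses. The paper additionally records, in Remark~\ref{aa21c}, an alternative direct verification via explicit base-change isomorphisms $\mathcal{B}_{\mathbf{z},\mathbf{z}'}$, but that is presented only as a second route, not the main proof.
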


\begin{rem}\label{aa21c}
This theorem can be proved by a direct computation in another way.
Actually, according to the discussion of \cite[\S 2]{Nos3}, if another tuple $\mathbf{z}' \in Z^m $ is Hurwitz equivalent to $ \mathbf{z} \in Z^m $,
then we can find an $A$-isomorphism $\mathcal{B}_{\mathbf{z}, \mathbf{z}' }: \mathrm{Ker}(\Gamma_{\mathbf{z}} ) \ra \mathrm{Ker}(\Gamma_{\mathbf{z}'} )$
satisfying the equality $\mathcal{Q}_{\psi, \ell } = \mathcal{Q}_{\psi, \ell }' \circ (\mathcal{B}_{ \mathbf{z}, \mathbf{z}' })^{\otimes 2 } $.
More precisely, regarding the relation \eqref{Hureq1}, we define the map by
$\mathcal{B}_{ \mathbf{z}, \mathbf{z}' } (x_i):= x_i \cdot e_{z}$. Further, concerning another \eqref{Hureq2}, we can verify the equality by setting 
$$ \mathcal{B}_{ \mathbf{z}, \mathbf{z}' } (x_1, \dots, x_m):= (x_1, \dots, x_{i-1} , x_{i+1}, x_{i+1} + (x_{i} -x_{i+1} )\cdot e_{z_{i+1 }}, x_{i+2}, \dots, x_m), $$
although the verifications are slightly complicated.
\end{rem} 
As a consequence, we get invariants of Lefschetz fibrations over $S^2$ and
of surface braids, thanks to the following facts:
\begin{exa}[{Lefschetz fibration over $S^2$}]\label{LFrei}
We now review a well-known theorem in \cite{Kas,Matsumoto}.
To describe this, let $G$ be the mapping class group, $\M_g $, of the closed surface $\Sigma_g$ with $g \geq 2$.
Furthermore, we fix two subsets of $G$ as follows:
\begin{equation}\label{defdg}\D:= \{ \ \tau_{\alpha } \in \M_g \ | \ \alpha \textrm{ is an (unoriented) simple closed curve} \ \gamma \ {\rm in \ } \Sigma_g \ \} , \end{equation}
\begin{equation}\label{defdg11}\DD:= \{ \ \tau_{\alpha } \in \D \ | \ \alpha \textrm{ is a non-separating simple closed curve} \ \gamma \ {\rm in \ } \Sigma_g \ \} , \end{equation}
where the symbol $\tau_{\alpha}$ is the (positive) Dehn twist along $\alpha$.
Let $Z $ be the set $ \D$ in \eqref{defdg}. 
Then, given a Lefschetz fibration,
we can observe that the associated monodromy is interpreted as an $m$-tuple.
Moreover, it follows from \cite{Kas} and \cite[Theorems 2.6 and 2.8]{Matsumoto} that
the interpretation gives a bijection between
the Hurwitz equivalence classes ${\rm Hur}^m(Z)$ and fiber-isomorphism classes of Lefschetz fibrations over $S^2$ with $m$-singular fibers.
\end{exa}
\begin{exa}[{Simple surface braids}]\label{surfacebraidrei}
Let $G$ be the braid group $B_n$,
and let $Z$ be the set of all elements conjugate to either $\sigma_i$ or $ \sigma_i^{-1}$.
Kamada \cite{Kam} showed that the Hurwitz equivalence classes are in 1:1-correspondence with the isomorphism classes of
``simple surface-braids with $m$-branch points of degree $n$"; see \cite{Kam} for the details.
\end{exa}


\begin{rem}\label{akj}
Finally, we comment the previous works on Lefschetz fibration invariants constructed from some quandles.
Zablow \cite[Theorem 5.3]{Zab} combinatorially defined a certain invariant of Lefschetz fibrations over the 2-disk,
which is valued in ``the second quandle homology group" $H_2^Q(\D;\Z)$ or $H_2^Q(\DD;\Z) $ associated with the sets $\D$ and $\DD$.
However the author showed that the homology $H_2^Q(\DD;\Z)$ is $ \Z/2 $ for $g \geq 5$; \cite[\S 4.4]{Nos5}.
Thus, it is sensible to conjecture that, in the case $X= \D$, such invariants might be almost trivial.
Actually, there was no non-trivial example of such invariants with $X= \D$.
In contrast, our invariants constructed from $\M_g$-modules
take non-trivial examples (see \S \ref{ss2132}).
\end{rem}
\subsection{A reduction of the invariants of bilinear forms}\label{ss2119}
In this paper,
we focus on the bilinear forms $\mathcal{Q}_{\psi}$ by virtue of bilinear form theory.
This subsection discusses some reductions to compute the invariants of bilinear forms.

First, we briefly make a reduction of the kernel $\mathrm{Ker}( \Gamma_{\mathbf{z} })$ under an assumption.
\begin{lem}\label{pro2331}
Fix an $m$-tuple $ (z_1, \dots, z_m ) \in Z^m$.
Assume the identity $e_{z_1} \cdots e_{z_m}= c \cdot \mathrm{id}_M $ in $\mathrm{End}(M)$ for some $c \in A $.

If $c=1$, then the kernel $\mathrm{Ker}( \Gamma_{\mathbf{z} }) $=$\mathrm{Ker}(\oplus_k \Gamma_{\mathbf{z},k }) $ is equal to the kernel
$\mathrm{Ker}(\Gamma_{\mathbf{z},k }) $ for any $k$.

If $ (1-c) \cdot \mathrm{id}_M $ is injective, then the kernel $\mathrm{Ker}( \Gamma_{\mathbf{z} }) $ is
the diagonal set $ \mathrm{Diag}(M)\subset M^m$.
\end{lem}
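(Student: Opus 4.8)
The plan is to reduce both statements to a single recurrence relating the consecutive maps $\Gamma_{\mathbf{z},k}$ and $\Gamma_{\mathbf{z},k+1}$. Concretely, I would prove that for every $k$ and every $(x_1,\dots,x_m) \in M^m$,
\begin{equation}\label{recGamma}
\Gamma_{\mathbf{z},k+1}(x_1,\dots,x_m) = \Gamma_{\mathbf{z},k}(x_1,\dots,x_m)\cdot e_{z_k} - (c-1)(x_k - x_{k+1}) \in M.
\end{equation}
The only input needed is the \emph{cyclic invariance} of the total product: from the hypothesis $e_{z_1}\cdots e_{z_m} = c\cdot \id_M$, the fact that $c \in A$ is central (so that $c\cdot\id_M$ commutes with every $e_{z}$) and that each $e_{z_i}$ is invertible in $\As(Z)$, one obtains $e_{z_{k+1}}e_{z_{k+2}}\cdots e_{z_{k+m}} = c\cdot\id_M$ for every $k$, with indices read modulo $m$.

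To derive \eqref{recGamma} I would multiply the defining expression \eqref{aac} for $\Gamma_{\mathbf{z},k}$ on the right by $e_{z_k}$ and regroup, using the period-$m$ convention $z_{m+s}=z_s$, $x_{m+s}=x_s$. The $j=k$ summand then completes to the full cyclic product and contributes $c\,(x_k-x_{k+1})$, while the leading term $(x_{k-1}-x_k)\cdot e_{z_k}$ reappears as the top summand of $\Gamma_{\mathbf{z},k+1}$; matching the remaining summands term by term yields \eqref{recGamma}. I expect the index bookkeeping in this step to be the main obstacle, since one must track carefully which summand telescopes into $c\cdot\id_M$ and verify that the boundary terms migrate correctly; the case $m=3$ is a reliable guide.

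Granting \eqref{recGamma}, both claims are immediate. If $c=1$, the relation reads $\Gamma_{\mathbf{z},k+1} = \Gamma_{\mathbf{z},k}\cdot e_{z_k}$, and since $e_{z_k}$ acts invertibly on $M$ the two maps have identical kernels; by induction all $\mathrm{Ker}(\Gamma_{\mathbf{z},k})$ coincide, so their intersection $\mathrm{Ker}(\Gamma_{\mathbf{z}})$ equals any single $\mathrm{Ker}(\Gamma_{\mathbf{z},k})$. If instead $(1-c)\cdot\id_M$ is injective, take $x \in \mathrm{Ker}(\Gamma_{\mathbf{z}})$, so $\Gamma_{\mathbf{z},k}(x)=\Gamma_{\mathbf{z},k+1}(x)=0$; then \eqref{recGamma} forces $(1-c)(x_k-x_{k+1})=0$, whence $x_k=x_{k+1}$ for all $k$ and $x\in\mathrm{Diag}(M)$. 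The reverse inclusion $\mathrm{Diag}(M)\subseteq\mathrm{Ker}(\Gamma_{\mathbf{z}})$ holds trivially, because every difference $x_j-x_{j+1}$ vanishes on the diagonal.
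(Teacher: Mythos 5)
Your proposal is correct and follows essentially the same route as the paper: the paper's entire proof consists of stating the identity $\Gamma_{\mathbf{z},k+1}(\mathbf{x}) = \Gamma_{\mathbf{z},k}(\mathbf{x})\cdot e_{z_{m+k}} + (x_{k-1}-x_k)\cdot(1 - e_{z_k}\cdots e_{z_{m+k-1}})$ and reading off both conclusions, exactly as you do, with your derivation supplying the ``we easily see'' step. (Your version of the correction term, $(1-c)(x_k - x_{k+1})$, is what the telescoping actually gives; the paper's $(x_{k-1}-x_k)$ appears to be an index slip, and either form yields the same conclusions.)
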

\begin{proof}
By definition, for any $ \mathbf{x}=(x_1, \dots, x_m)\in M^m $, we easily see the equation
$$ \Gamma_{\mathcal{S},k+1} ( \mathbf{x}) = \Gamma_{\mathcal{S},k} ( \mathbf{x}) \cdot e_{z_{m+k}} +(x_{k-1}-x_{k} )\cdot (1 - e_{z_k} \cdots e_{z_{m+k-1 }}) \in M . $$
Hence the assumptions on $ (1-c) \cdot \mathrm{id}_M $ deduce the required conclusions.
\end{proof}
Therefore, to obtain non-trivial examples of $\mathcal{Q}_{\psi}$,
it is reasonable to fix the assumption $e_{z_1} \cdots e_{z_m}= \mathrm{id}_M $.

We now investigate this assumption. 
It is worth noticing that the kernel of the homomorphism $ \mathcal{E}: \As(Z) \ra G $ in \eqref{bba} is contained in the center,
because of the equality
$$ g e_{z} g^{-1} = e_{ \mathcal{E} (g) z \mathcal{E}(g)^{-1}} \in \As(Z) \ \ \ \ \mathrm{for \ any \ } z \in Z, \ g\in \As(Z) .$$
Thus, the product $e_{z_1} \cdots e_{z_m} \in \As(Z) $ lies in the center, since $z_1 \cdots z_m =1_G$.
In summary, the assumption is detected by the image of the center in $\mathrm{End}(M)$.
However the assumption is not so strong in practice.
For example,
\begin{exa}\label{exa1}
For any $G$-module $M$, we think of $M$
as an $\As(Z)$-module via the map $\mathcal{E}$ in \eqref{bba}. 
Then the identity $e_{z_1} \cdots e_{z_m} = \mathrm{id}_M$ follows from the condition $z_1 \cdots z_m = 1_G$.
\end{exa}

\begin{exa}\label{exa1222}
Let $G=\M_g$ and $Z=\mathcal{D}_g $ be as in Example \ref{LFrei}.
If $g \geq 3$, consider the two elements of the forms
$$ \kappa_{3\textrm{-}\mathrm{chain }} := (e_{c_1}e_{c_2}e_{c_3})^4 e_{d}^{-1}e_{d'}^{-1}, \ \ \ \ \ \kappa_{\rm lantern} := e_{c_1}^{-1}e_{c_3}^{-1}c_{c_5}^{-1}e_{b_{3}}^{-1} e_{b_1}e_{b_2}e_{d'} \in \As(\D), $$
where $b_i, \ c_i,$ and $ d^{(')}$ are the Dehn twists along the curves $\beta_i, \ \gamma_i,$ and $ \delta^{(')} $ in Figure \ref{tg261}, respectively;
Otherwise, if $g=2$, define $\kappa_{3\textrm{-}\mathrm{chain }} $ to be $(e_{c_1}e_{c_2}e_{c_3})^4 e_{c_5}^{-2} $,
and $\kappa_{\rm lantern} $ to be $\mathrm{1}_{\As (\D)}$.
As is well-known, $ \mathcal{E} (\kappa_{3\textrm{-}\mathrm{chain }} )$ and $ \mathcal{E} (\kappa_{\rm lantern} ) $ are the identity in $\mathcal{M} _g$,
which are commonly called the {\it 3-chain relation} and the {\it lantern relation}, respectively.
As is shown, 
the assumption $e_{z_1} \cdots e_{z_m}= \mathrm{id}_M$ is detected by
two elements $ \kappa_{3\textrm{-}\mathrm{chain }} $, $\kappa_{\rm lantern} $ and the signature of the Lefschetz fibrations;
see the last section in \cite{Nos1} for the details.
\end{exa}

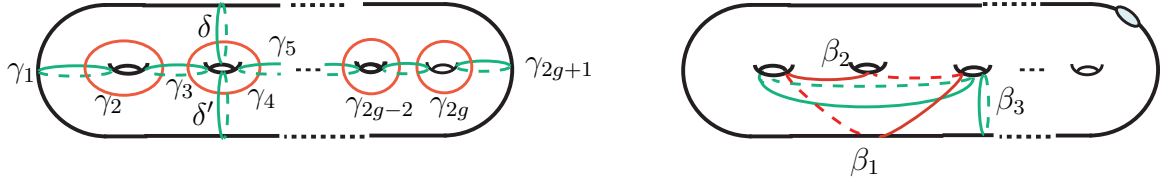
\begin{figure}[h]
$$
\begin{picture}(220,65)
\put(-112,34){\pc{L.generator12}{0.36644466755}}

\put(132,34){\pc{L.generator11}{0.36644466755}}

\put(-118,38){\large $\gamma_1$}
\put(-85,26){\large $\gamma_2$}
\put(-58,32){\large $\gamma_3$}
\put(-27,28){\large $\gamma_4$}
\put(10,25){\large $\gamma_{2g-2}$}

\put(-19,48){\large $\gamma_5$}
\put(-48,20){\large $\delta'$}
\put(-46,53){\large $\delta $}
\put(42,25){\large $\gamma_{2g}$}
\put(78,40){\large $\gamma_{2g+1} $}

\put(201,3){\large $\beta_1 $}
\put(190,44){\large$\beta_2$}
\put(256,26){\large $\beta_3$}

\end{picture}
$$

\vskip -1.225pc
\caption{\label{tg261} Generators of the group $\M_g$ with $g \geq 2$ and, the curves in the lantern relation with $g \geq 3$.}
\end{figure}
Under the assumption, 
we explain a topological interepretation of $\mathcal{Q}_{\psi,\ell} $ (see Proposition \ref{aa1133c}). 
which states a cohomological interpretation of
the bilinear form $\mathcal{Q}_{\psi,\ell} $ in the context of cup products.
Precisely, assume the identity $ e_{z_1} \cdots e_{z_m}= \mathrm{id}_M$  
and fix a tubular neighborhood, $D_i \ ( \cong \R^2)$, of the $i$-th point $\{ b_i\}$.
Let $Y$ be the 2-sphere $S^2$ with the $m$-open discs $D_1 \sqcup \cdots \sqcup D_m $ removed. 
Considering the action of $\pi_1(Y) $ on $M$ induced from that of $\pi_1(S^3 \setminus T_{m,m})$ and recalling \eqref{aa13},
we can define the cohomology $ H^*( Y,\partial Y ; M )$ with local coefficients $M.$
Furthermore, we write $M^{\rm op} $ for the $A$-module obtained by composing the module structure of $M$ with the involution.
\begin{prop}[{\cite[Proposition 4.7]{Nos3}}]\label{aa1133c}
Let $ e_{z_1} \cdots e_{z_m}= {\rm id} \in \End(M)$.
Then, the diagonal map $ M \ra \Ker ( \Gamma_{\mathbf{z}}) $ is a splitting injection, and
the cokernel is isomorphic to the relative cohomology $H^1(Y, \partial Y; M )$.

Moreover, the bilinear form $\mathcal{Q}_{\psi,\ell} $ coincides with the following composite, up to sign:
$$ H^1( Y,\partial Y ; M )^{\otimes 2} \stackrel{\smile }{\lra} H^2( Y ,\partial Y ; M^{\rm op }\otimes M )
\xrightarrow{ \ \langle \bullet , [(Y ,\partial Y )] \rangle \ } ( M^{\rm op } \otimes M )_{\As(Z)} \xrightarrow{ \ \psi(\bullet, \bullet) \ }A . $$
Here, the first map is the cup product, and the second is the pairing with
the fundamental class $[(Y ,\partial Y )] \in H_2(Y ,\partial Y ;\Z )\cong \Z $ in trivial coefficients,
and the third is the evaluation with $ \psi$.
\end{prop}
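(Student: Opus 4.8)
The plan is to realize the algebraic data in Definition \ref{aa11c} as the cellular cochain complex of the pair $(Y,\partial Y)$ with local coefficients in $M$, and then to read off the cup-product pairing at the cochain level. First I would fix a CW structure on $Y=S^2\setminus(D_1\sqcup\cdots\sqcup D_m)$ adapted to the arcs $\alpha_1,\dots,\alpha_m$ of Figure \ref{tg222}: take two interior vertices $N,S$ joined by $m$ ``meridian'' $1$-cells $\eta_1,\dots,\eta_m$ that cut $S^2$ into $m$ lunes, place the $i$-th hole $D_i$ inside the $i$-th lune, and add a cutting arc from $\partial D_i$ so that each punctured lune becomes a genuine relative $2$-cell $f_i$. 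After the obvious elementary collapses of the cutting arcs and of the edge joining $N$ to $S$, the relative cellular complex reduces to
$$ M \xrightarrow{\ \mathrm{diag}\ } M^m \xrightarrow{\ \Gamma_{\mathbf{z}}\ } M^m, $$
where the $1$-cochains are the values $(x_1,\dots,x_m)$ on the $\eta_i$ and the $2$-cochains are indexed by the $f_i$. Here the assumption $e_{z_1}\cdots e_{z_m}=\id$ is exactly what makes the holonomy descend from $\pi_1(S^3\setminus T_{m,m})\cong\Z\times F_{m-1}$ (Proposition \ref{aa51433}) to $\pi_1(Y)\cong F_{m-1}$, and it is also what forces the leading coefficient of $\Gamma_{\mathbf{z},k}$ to be the identity; the substitution $u_j:=x_j-x_{j+1}$ then identifies $\Gamma_{\mathbf{z},k}$ with the Fox/Wirtinger coboundary around the face $f_k$, so that $\Ker(\Gamma_{\mathbf{z}})$ is precisely the group of relative $1$-cocycles.

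Granting this identification, the first assertion is formal. The relative $0$-coboundary is the diagonal, whose image is the space of coboundaries; since $\partial Y\neq\emptyset$ and $Y$ is connected, $H^0(Y,\partial Y;M)=0$, i.e. $\mathrm{diag}$ is injective, and hence
$$ 0\lra M \xrightarrow{\ \mathrm{diag}\ } \Ker(\Gamma_{\mathbf{z}}) \lra H^1(Y,\partial Y;M)\lra 0. $$
This sequence splits because any coordinate projection $M^m\to M$, $(x_1,\dots,x_m)\mapsto x_1$, restricts to a retraction of $\mathrm{diag}$; note that Lemma \ref{pro2331} guarantees $\Ker(\Gamma_{\mathbf{z}})=\Ker(\Gamma_{\mathbf{z},k})$ for every $k$, so no choice of $k$ is privileged.

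For the second assertion I would compute the cellular cup product $H^1\otimes H^1\to H^2(Y,\partial Y;M^{\mathrm{op}}\otimes M)$ after fixing an Alexander--Whitney diagonal approximation for the chosen CW structure. On a $2$-cell the ``front'' factor contributes the accumulated holonomy of the first cocycle read along the boundary path $a_\ell a_{\ell+1}\cdots$ of $f_k$, while the ``back'' factor contributes the local increment of the second cocycle at the terminal edge; summing these pairings over the faces and evaluating against the cellular representative of $[(Y,\partial Y)]$ yields, term by term, the nested partial sum $x_{k+\ell-1}-x_{k+\ell}+\sum_{j=1}^{k-1}(x_{j+\ell-1}-x_{j+\ell})e_{z_{j+\ell}}\cdots e_{z_{k+\ell-1}}$ paired against $y_{k+\ell}(1-e_{z_{k+\ell}}^{-1})$. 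The involution enters through $\psi(ax,by)=\bar a\,b\,\psi(x,y)$, which is why the first slot is taken in $M^{\mathrm{op}}$ so that the evaluation lands in $(M^{\mathrm{op}}\otimes M)_{\As(Z)}$; applying $\psi$ afterward reproduces \eqref{bbbdd}, and the range $k=1,\dots,m-1$ arises from the cellular representative of $[(Y,\partial Y)]$ together with $e_{z_1}\cdots e_{z_m}=\id$, which makes the remaining face contribute trivially.

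The main obstacle is this last step: pinning down the cochain-level cup product with twisted coefficients so that it matches \eqref{bbbdd} exactly. The difficulty is not conceptual but bookkeeping---one must commit to a diagonal approximation, transport the local system along the boundary of each $f_k$ (this is the origin of the cumulative products $e_{z_{j+\ell}}\cdots e_{z_{k+\ell-1}}$ and of the shift by $\ell$), and place the involution on the correct tensor factor before $\psi$ is applied. Getting all the signs and the $\ell$-shift to agree with \eqref{bbbdd} is where the real care is needed; by contrast, the reduction of the cellular complex and the splitting of the short exact sequence are routine.
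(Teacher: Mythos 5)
The first thing to note is that the paper contains no proof of this statement: Proposition \ref{aa1133c} is imported verbatim from \cite[Proposition 4.7]{Nos3} and is used as a black box (the proofs of Theorems \ref{aa333c} and \ref{akdsj} again defer the identification of $\Coker(\delta^*)$ with $\underline{M}_{\mathbf{z}}$ to that reference). So there is no internal argument to compare yours against; your cellular/Alexander--Whitney strategy is the natural one and, judging from Remark \ref{aa5312}, close in spirit to the cited proof, which is carried out diagrammatically via (shadow) colourings.

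As a proof, however, the proposal has two concrete gaps. First, the claimed reduced complex $M\to M^m\to M^m$ cannot be chain homotopy equivalent to $C^*(Y,\partial Y;M)$: elementary collapses preserve Euler characteristic, and $1-m+m=1$, while $\chi(Y,\partial Y)=2-m$ (the uncollapsed relative complex for your CW structure is $M^2\to M^{2m}\to M^m$). The degree-two term must be a single copy of $M$ (with $H^2\cong M_{\pi_1(Y)}$), i.e.\ one face relation, say $\Gamma_{\mathbf{z},k}$, the other faces being redundant because $e_{z_1}\cdots e_{z_m}=\mathrm{id}$; Lemma \ref{pro2331} is then exactly what lets you replace $\Ker(\Gamma_{\mathbf{z},k})$ by $\Ker(\oplus_k\Gamma_{\mathbf{z},k})$, and you should invoke it for that purpose, not only to justify the retraction. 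Relatedly, the identification of the two coboundaries with $\mathrm{diag}$ and $\Gamma_{\mathbf{z}}$ is asserted rather than checked: the $x_i$ are naturally labels of the $m$ lunes, so the genuine relative $1$-cochains are the differences $u_j=x_j-x_{j+1}$, which satisfy $\sum_j u_j=0$ and hence span only an $M^{m-1}$ inside your $M^m$; whether $\Coker(\mathrm{diag})$ equals $H^1(Y,\partial Y;M)$ or merely surjects onto it is decided by precisely this bookkeeping. Second, and more seriously, the entire second assertion --- that the cup-product composite equals \eqref{bbbdd}, with its nested holonomy products, the factor $y_{k+\ell}\cdot(1-e_{z_{k+\ell}}^{-1})$, the $\ell$-shift, and the truncation at $k=m-1$ --- is the substantive half of the proposition, and you explicitly defer it. Nothing in the proposal fixes a diagonal approximation or carries out the evaluation against a cellular representative of $[(Y,\partial Y)]$, so this half remains unproved: a reasonable plan, but not yet a proof.
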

\noindent
As a result, the author \cite{Nos3} showed some properties of 
the bilinear form $ \mathcal{Q}_{\psi, \ell }$: 
\begin{prop}[{\cite[\S 4.3]{Nos3}}]\label{procol31}
Fix an $m$-tuple $ (z_1, \dots, z_m ) \in Z^m$ with the assumption $e_{z_1} \cdots e_{z_m}= \mathrm{id}_M \in \mathrm{End}(M)$.
\begin{enumerate}[(I)]
\item For any $\ell$ with $1 \leq \ell \leq m$, the equality $\mathcal{Q}_{\psi, 1 } = \mathcal{Q}_{\psi, \ell }$ holds as a map.
\item
If $\psi$ is hermitian, then the bilinear form $\mathcal{Q}_{\psi, \ell }$ is skew-hermitian.
Furthermore, if $\psi$ is skew-hermitian, then $\mathcal{Q}_{\psi, \ell }$ is hermitian.
\end{enumerate}
\end{prop}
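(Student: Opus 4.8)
The plan is to deduce both assertions from the cohomological description of $\mathcal{Q}_{\psi,\ell}$ furnished by Proposition \ref{aa1133c}, which under the standing assumption $e_{z_1}\cdots e_{z_m}=\id_M$ identifies $\mathcal{Q}_{\psi,\ell}$ (up to an overall sign) with the composite of the cup product $H^1(Y,\partial Y;M)^{\otimes 2}\to H^2(Y,\partial Y;M^{\rm op}\otimes M)$, the pairing with the fundamental class $[(Y,\partial Y)]$, and the evaluation by $\psi$. The crucial feature for part (I) is that this composite carries no reference to the index $\ell$: both the splitting of $\Ker(\Gamma_{\mathbf{z}})$ as $M\oplus H^1(Y,\partial Y;M)$ and the cup-product pairing are fixed once $\mathbf{z}$, $M$, and an orientation of $(Y,\partial Y)$ are chosen, none of which involves $\ell$. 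Hence, by Proposition \ref{aa1133c}, all the forms $\mathcal{Q}_{\psi,\ell}$ coincide on $\Ker(\Gamma_{\mathbf{z}})^{\otimes 2}$ up to a single sign that is itself independent of $\ell$.

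To promote this to the exact equality $\mathcal{Q}_{\psi,1}=\mathcal{Q}_{\psi,\ell}$ asserted in (I), rather than merely equality up to a sign, I would run a direct reindexing of the defining sum \eqref{bbbdd}. Writing the first argument of the $k$-th summand as $X^{(\ell)}_k:=\sum_{j=1}^{k}(x_{j+\ell-1}-x_{j+\ell})\,e_{z_{j+\ell}}\cdots e_{z_{k+\ell-1}}$ and the second as $Y^{(\ell)}_k:=y_{k+\ell}(1-e_{z_{k+\ell}}^{-1})$, one checks the two recursions $X^{(\ell)}_{k+1}=(x_\ell-x_{\ell+1})\,e_{z_{\ell+1}}\cdots e_{z_{k+\ell}}+X^{(\ell+1)}_k$ and $Y^{(\ell+1)}_k=Y^{(\ell)}_{k+1}$. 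Substituting these into $\mathcal{Q}_{\psi,\ell+1}$ and shifting the summation index collapses $\mathcal{Q}_{\psi,\ell+1}-\mathcal{Q}_{\psi,\ell}$ to the boundary contributions $\psi(X^{(\ell)}_m,Y^{(\ell)}_m)-\psi(X^{(\ell)}_1,Y^{(\ell)}_1)$ together with a correction sum in which the $\As(Z)$-invariance $\psi(u\cdot e_z,v)=\psi(u,v\cdot e_z^{-1})$ lets one transfer all the Dehn-twist letters onto the second slot; the resulting telescope, read off against the kernel relations $\Gamma_{\mathbf{z},k}(\mathbf{x})=0$ and the closure $e_{z_1}\cdots e_{z_m}=\id_M$, cancels. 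Iterating in $\ell$ then gives the claim.

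For part (II) the cohomological picture does the work cleanly, and here the undetermined overall sign of Proposition \ref{aa1133c} is harmless, since it is a real unit $\pm1$ equal to its own conjugate and so cancels when one compares $\mathcal{Q}_{\psi,\ell}(\mathbf{x},\mathbf{y})$ with $\overline{\mathcal{Q}_{\psi,\ell}(\mathbf{y},\mathbf{x})}$. The key input is the graded commutativity of the cup product in complementary degree: for $\alpha,\beta\in H^1(Y,\partial Y;M)$ one has $\alpha\smile\beta=(-1)^{1\cdot 1}\tau_*(\beta\smile\alpha)$, where $\tau$ swaps the two tensor factors of the coefficient module $M^{\rm op}\otimes M$. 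Pairing with the fundamental class commutes with $\tau_*$, so after evaluation the single interchange of arguments introduces the sign $-1$ from graded commutativity together with whatever sign $\psi$ contributes under swapping its two entries. Since $\psi$ hermitian means $\psi(u,v)=\overline{\psi(v,u)}$, these combine to yield $\mathcal{Q}_{\psi,\ell}(\mathbf{x},\mathbf{y})=-\overline{\mathcal{Q}_{\psi,\ell}(\mathbf{y},\mathbf{x})}$, i.e.\ skew-hermitian; if instead $\psi$ is skew-hermitian, the extra minus sign from $\psi$ cancels the one from graded commutativity and $\mathcal{Q}_{\psi,\ell}$ is hermitian.

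The main obstacle I expect is bookkeeping rather than conceptual. In (I) the delicate point is verifying that the boundary and correction terms genuinely cancel: this is precisely where the hypothesis $e_{z_1}\cdots e_{z_m}=\id_M$ is indispensable, as it is what closes up the cyclic reindexing in the spirit of Lemma \ref{pro2331}, and one must track the period-$m$ convention $x_{m+s}=x_s$ with care. In (II) the only real care needed is the compatibility of graded commutativity with the local coefficient system and with the involution built into $M^{\rm op}$, so that the swap $\tau$ on $M^{\rm op}\otimes M$ matches exactly the interchange of the two arguments of $\psi$; once the conventions for $M^{\rm op}$ and for the orientation of $(Y,\partial Y)$ are pinned down, both signs are forced and the statement follows.
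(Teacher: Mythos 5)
Your proposal is essentially correct, but note that the paper itself gives no proof of Proposition \ref{procol31}: it is quoted verbatim from \cite[\S 4.3]{Nos3}, so there is nothing internal to compare against except the tools the paper supplies. Your reconstruction uses exactly those tools. For (II), the route through Proposition \ref{aa1133c} --- graded commutativity of the cup product in degree $1$ contributing a sign $-1$, combined with the (skew-)hermitian symmetry of $\psi$, with the undetermined global sign $\pm 1$ of Proposition \ref{aa1133c} cancelling because it is real --- is the standard argument and is sound; the one point requiring care, which you correctly flag, is that the swap $\tau$ on $M^{\rm op}\otimes M$ has the right variance so that $\overline{\psi\circ\tau}$ is again $\As(Z)$-invariant and sesquilinear in the same sense as $\psi$. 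For (I), you are right that the cohomological picture alone only gives equality up to a possibly $\ell$-dependent sign, and your direct telescoping is the correct fix: the recursions $X^{(\ell)}_{k+1}=(x_\ell-x_{\ell+1})e_{z_{\ell+1}}\cdots e_{z_{k+\ell}}+X^{(\ell+1)}_k$ and $Y^{(\ell+1)}_k=Y^{(\ell)}_{k+1}$ check out, the boundary term $\psi(X^{(\ell)}_m,Y^{(\ell)}_m)$ vanishes because $X^{(\ell)}_m=\Gamma_{\mathbf{z},\ell}(\mathbf{x})=0$ on the kernel (using periodicity $x_{m+s}=x_s$), and the remaining correction sum, after transferring the $e_z$-letters to the second slot by $\As(Z)$-invariance, collapses via the closure $e_{z_1}\cdots e_{z_m}=\id_M$ to $-\psi\bigl(x_\ell-x_{\ell+1},\ \Gamma_{\mathbf{z},\ell+1}(\mathbf{y})\bigr)=0$. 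So the cancellation you assert does hold, though it uses $\Gamma(\mathbf{y})=0$ for the correction sum rather than only $\Gamma(\mathbf{x})=0$; writing out that one identity is the only substantive gap between your sketch and a complete proof.
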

\noindent
Since $\mathcal{Q}_{\psi ,\ell } $ does not
depend on $\ell$ under the assumption, we later only discuss $\mathcal{Q}_{\psi ,1 } $ with $\ell=1$, and denote it by $\mathcal{Q}_{\psi} $.
We further review a reduction of $\mathcal{Q}_{\psi} $, which implies
that a large part of $ \mathrm{Ker}( \Gamma_{\mathbf{z} }) $ is contained in the kernel of $\mathcal{Q}_{\psi} $; Precisely,
\begin{prop}[{\cite[Proposition 4.7]{Nos3}}]\label{pprop1}
Consider an element $ \mathbf{x} \in M^m$ of the form
$$ (x, \dots, x) , \ \ \ {\rm or }\ \ \ \ (x_1, \dots,x_m ) $$
for some $x \in M$ and $ x_i \in \Ker (1 -e_{z_i }) \subset M$.
Then, $ \mathbf{x}$ lies in the kernel $\mathrm{Ker}(\oplus_k \Gamma_{\mathbf{z},k }) $=$\mathrm{Ker}( \Gamma_{\mathbf{z} }) $,
and the vanishing $ \mathcal{Q}_{\psi,\ell} ( \mathbf{x}, \mathbf{y})= \mathcal{Q}_{\psi,\ell } ( \mathbf{y}, \mathbf{x})= 0$ holds for any $ \mathbf{y} \in \mathrm{Ker}( \Gamma_{\mathbf{z} }) $.
\end{prop}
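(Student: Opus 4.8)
The plan is to prove the two assertions separately and, within each, to treat the diagonal tuple $(x,\dots,x)$ and the ``locally fixed'' tuple $(x_1,\dots,x_m)$ with $x_i\in\Ker(1-e_{z_i})$ in parallel. Throughout I work under the standing assumption $e_{z_1}\cdots e_{z_m}=\id_M$, so that $\Ker(\Gamma_{\mathbf{z}})=\Ker(\Gamma_{\mathbf{z},k})$ for every $k$ by Lemma \ref{pro2331}, and I may take $\ell=1$ by Proposition \ref{procol31}(I). The single tool I use repeatedly is the $\As(Z)$-invariance of $\psi$, rewritten as $\psi(u\cdot e_z,v)=\psi(u,v\cdot e_z^{-1})$, which slides a generator from one slot of $\psi$ to the other; combined with $x_i\cdot e_{z_i}=x_i$ it shows at once that $\psi(x_i,\,w\cdot(1-e_{z_i}^{-1}))=0$ for every $w\in M$.

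For membership in $\Ker(\Gamma_{\mathbf{z}})$ the diagonal case is immediate, since every difference $x_j-x_{j+1}$ occurring in \eqref{aac} vanishes. For the locally fixed case I compute $\Gamma_{\mathbf{z},k}(\mathbf{x})$ by telescoping: writing $W_{a,b}:=e_{z_a}\cdots e_{z_b}$ and using $x_{j+1}\cdot e_{z_{j+1}}=x_{j+1}$, each summand $(x_j-x_{j+1})\cdot W_{j+1,m+k-1}$ rewrites as $x_j\cdot W_{j+1,m+k-1}-x_{j+1}\cdot W_{j+2,m+k-1}$, so the sum in \eqref{aac} collapses to $x_k\cdot W_{k+1,m+k-1}-x_{k-1}$. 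The standing assumption makes the cyclic word $W_{k,m+k-1}$ equal to $\id_M$, whence $W_{k+1,m+k-1}=e_{z_k}^{-1}$ and $x_k\cdot W_{k+1,m+k-1}=x_k$; adding back the leading term $x_{k-1}-x_k$ gives $\Gamma_{\mathbf{z},k}(\mathbf{x})=0$.

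For the vanishing I reduce each of the four evaluations of $\mathcal{Q}_{\psi}$ to a single application of $\psi$. Two of them cost nothing: for the diagonal tuple the first entry of \eqref{bbbdd} is identically zero, so $\mathcal{Q}_{\psi}(\mathbf{x},\mathbf{y})=0$; for the locally fixed tuple the second entry $x_{k+\ell}\cdot(1-e_{z_{k+\ell}}^{-1})$ vanishes term by term, so $\mathcal{Q}_{\psi}(\mathbf{y},\mathbf{x})=0$. The remaining two genuinely use $\mathbf{y}\in\Ker(\Gamma_{\mathbf{z}})$. For the diagonal tuple in the second slot, invariance turns $\mathcal{Q}_{\psi}(\mathbf{y},\mathbf{x})$ into $\psi\!\bigl(\sum_{k=1}^{m-1}P_k(\mathbf{y})\cdot(1-e_{z_{k+1}}),\,x\bigr)$, where $P_k(\mathbf{y})$ is the first entry of \eqref{bbbdd}, and I verify by an Abel summation fed by the relation $\Gamma_{\mathbf{z},1}(\mathbf{y})=0$ that the bracketed vector is literally $0\in M$. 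For the locally fixed tuple in the first slot, I telescope $P_k(\mathbf{x})$ as in the membership step, discard the part annihilated against the factor $(1-e_{z_{k+1}}^{-1})$, and slide the surviving word across $\psi$ to land on $\psi(x_1,\Xi)$ with $\Xi=\sum_k y_{k+1}\cdot(V_k-V_{k+1})$ a telescoping expression in the $\mathbf{y}$-entries; substituting the kernel relation and using $x_1\cdot e_{z_1}=x_1$ forces this last pairing to vanish.

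The main obstacle is this locally fixed case in the first slot. Since $\psi$ is not assumed (skew-)hermitian, it cannot be deduced from the second-slot computation and must be collapsed on its own, and the cancellation only surfaces after the kernel relation is substituted; the delicate point is the cyclic-index bookkeeping and the interplay between the forward words $W_{a,b}$ that appear in $\Gamma_{\mathbf{z},k}$ and their inverses that appear in $\Xi$. As a conceptual cross-check against sign and indexing errors I would invoke Proposition \ref{aa1133c}: the diagonal tuple is the image of the splitting injection $M\ra\Ker(\Gamma_{\mathbf{z}})$, hence maps to $0$ in the cokernel $H^1(Y,\partial Y;M)$ on which the cup-product form is defined, while the locally fixed tuples lie in the image of the connecting map $H^0(\partial Y;M)\ra H^1(Y,\partial Y;M)$, i.e. they are classes coming from the boundary $\partial Y$. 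Both families therefore sit in the radical of the relative cup product on either side, which is exactly the asserted two-sided vanishing.
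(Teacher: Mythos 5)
Your proof is correct, but note that the paper itself contains no proof of Proposition \ref{pprop1}: the statement is imported verbatim from \cite[Proposition 4.7]{Nos3}, so there is no in-text argument to match yours against. Your direct computation checks out in all four cases --- the telescoping of \eqref{aac} using $x_{j+1}\cdot e_{z_{j+1}}=x_{j+1}$ does collapse $\Gamma_{\mathbf{z},k}(\mathbf{x})$ to $x_k\cdot(e_{z_{k+1}}\cdots e_{z_{m+k-1}}-1)=0$; the sum $\sum_{k}P_k(\mathbf{y})\cdot(1-e_{z_{k+1}})$ does equal $-\Gamma_{\mathbf{z},1}(\mathbf{y})$ by Abel summation; and in the hardest case your $\Xi$ reduces, after substituting $\Gamma_{\mathbf{z},1}(\mathbf{y})=0$, to $y_1\cdot(1-e_{z_1})$, which pairs to zero against $x_1$ since $x_1\cdot e_{z_1}=x_1$. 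Two remarks. First, the ``conceptual cross-check'' you relegate to the end is in fact the argument the paper itself leans on: in the proof of Theorem \ref{aa333c} the author identifies $\mathrm{Diag}(M)+\bigoplus_i\Ker(1-e_{z_i})$ with (the preimage of) $\Im(\delta^*)$ and invokes $\langle u\smile p^*(v),[Y,\partial Y]\rangle=0$ to kill exactly these classes on both sides; so the cohomological route via Proposition \ref{aa1133c} is the ``official'' proof, and your algebraic computation is the more elementary and self-contained alternative (it does not need $Y$ or Poincar\'e duality at all). Second, you are right to install $e_{z_1}\cdots e_{z_m}=\mathrm{id}_M$ as a standing hypothesis even though the Proposition as printed omits it: for the tuple $(x_1,\dots,x_m)$ with $x_i\in\Ker(1-e_{z_i})$ both the kernel membership and the reduction of $\Xi$ genuinely use it (without it $\Gamma_{\mathbf{z},k}(\mathbf{x})=x_k\cdot(c-1)$ for the central value $c$, which need not vanish), whereas the diagonal case and the ``second-slot'' vanishings survive without it. Flagging that dependence explicitly, as you did, is the only point where your write-up goes beyond what the statement literally licenses.
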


\subsection{Unimodularity of the invariants of bilinear forms}\label{ss21192}
Next, we will discuss the unimodularity of $\mathcal{Q}_{\psi} $.
According to Proposition \ref{pprop1}, we shall regard $ \mathcal{Q}_{\psi}$ as a bilinear form on 
the quotient module of the form
\begin{equation}\label{mzmz} \underline{M}_{\mathbf{z} }:= \mathrm{Ker}( \Gamma_{\mathbf{z} }) / \bigl( \mathrm{Diag}(M) + ( \bigoplus_{ i: \ 1 \leq i \leq m } \Ker(1- e_{z_i}:M \ra M ) ) \bigr).
\end{equation}
To analyse the unimodularity in more details, 
we assume that $A$ is a Dedekind domain (because every known quantum representation is closed under a Dedekind domain; see \cite[Theorem 1,1]{Gil}), 
and $M$ is a finitely generated projective $A$-module.
Let $\Tor(M)$ denote the torsion submodule of $M$.
To the end, we suppose the following basic results on Dedekind domains:
\begin{enumerate}[(i)]
\item Every finitely generated $A$-module $M$ admits an ideal $\mathcal{I} \subset A$ and a unique integer $k \in \mathbb{N}$ for which
the $A$-isomorphism $M \cong A^k \oplus \mathcal{I} \oplus \Tor(M) $ holds. 
\item (Semi-heredity) Every $A$-submodule of a projective $A$-module is also projective.
\end{enumerate}
In particular, the canonical projection $ \mathrm{Ker}( \Gamma_{\mathbf{z} }) \ra \underline{M}_{\mathbf{z} }/\Tor (\underline{M}_{\mathbf{z} }) $
splits since the image is projective by (ii).
Therefore, denoting by $ M_{\mathbf{z} } \subset \mathrm{Ker}( \Gamma_{\mathbf{z} }) $ the summand obtained from the splitting, 
we have a decomposition 
$$ \mathrm{Ker}( \Gamma_{\mathbf{z} }) \cong M_{\mathbf{z} } \oplus (\mathrm{Ker}( \Gamma_{\mathbf{z} }) / M_{\mathbf{z} } )$$
as projective modules.
Then, by Proposition \ref{pprop1}, the vanishings $ \mathcal{Q}_{\psi} ( \mathbf{x}, \mathbf{y})= \mathcal{Q}_{\psi} ( \mathbf{y}, \mathbf{x})= 0$ hold for any
$ \mathbf{x} \in \mathrm{Ker}( \Gamma_{\mathbf{z} }) $ and $ \mathbf{y} \in \mathrm{Ker}( \Gamma_{\mathbf{z} })/ M_{\mathbf{z} } $.
Thus we shall address non-degeneracy restricted on the summand $ M_{\mathbf{z}} $: 
\begin{thm}\label{aa333c}
Let $A$ be a field or a Dedekind domain, and $M$ a finitely generated free $A$-module. 
Fix an $m$-tuple $ (z_1, \dots, z_m ) \in Z^m$ with $e_{z_1} \cdots e_{z_m}= \mathrm{id}_M $.
Take the projective submodule $ M_{\mathbf{z} } \subset \mathrm{Ker}( \Gamma_{\mathbf{z} }) $ explained above.
\begin{enumerate}[(I)]
\item If the $\As(Z)$-invariant bilinear function $\psi:M^2 \ra A$ is non-degenerate,
so is the resulting bilinear form $\mathcal{Q}_{\psi} $ restricted on the direct summand $M_{\mathbf{z} }$. 
\item Assume that $\psi$ is unimodular. Then, the restriction $\mathcal{Q}_{\psi} $ on the summand $M_{\mathbf{z} }$ is unimodular if and only if
the following map $\chi^*$ from a cokernel $\Coker(\lambda^{*})$ is injective:
$$ \chi^*: \Coker \Bigl( \Ext^1_A ( \frac{M}{ \mathcal{A}_{\mathbf{z}}}, A) \stackrel{ \lambda^{*}}{\lra} \Ext^1_A ( \bigoplus_{ 1 \leq i \leq m} \frac{M}{ \Ker(1-e_{z_i}) }, \ A ) \Bigr)\lra \Ext^1_A \bigl( \frac{M^{m-1}}{ \mathcal{B}_{\mathbf{z}} }, A \bigr) ,$$
where the maps $\iota^* $ and $ \kappa^* $ are induced by the homomorphisms
\[ \chi:M^{m-1}\lra M^m ; \ \ \ (a_1, \dots, a_{m-1}) \longmapsto (a_1, \dots, a_{m-1}, -a_1 -a_2 - \cdots -a_{m-1}) , \]
\begin{equation}\label{suzuki2} \lambda :M^{m}\lra M; \ \ \ \ \ \ \ \ (b_1,\dots,b_m) \longmapsto b_1 + \cdots +b_m, \end{equation}
respectively, and the submodules $ \mathcal{A}_{\mathbf{z}} $ and $\mathcal{B}_{\mathbf{z}} $ are
defined to be the followings:
\[ \mathcal{A}_{\mathbf{z}}:=\{\ b_1 \cdot ( 1- e_{z_1}) + \cdots + b_{m-1} \cdot (1 - e_{z_{m-1}}) \in M \ | \ b_1, \dots, b_{m-1} \in M \ \} ,\]
\[ \mathcal{B}_{\mathbf{z}}:= \{ \ ( b- b \cdot e_{z_1}, \dots, b - b \cdot e_{z_{m-1}}) \in M^{m-1}\ | \ b\in M \}. \]
\end{enumerate}

\end{thm}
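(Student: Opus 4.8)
The plan is to transport the whole question into cohomology via Proposition \ref{aa1133c} and then run twisted Poincar\'e--Lefschetz duality together with the universal coefficient theorem over $A$. Since $\psi$ is (in (I)) non-degenerate and (in (II)) unimodular, it identifies $M$ with its contragredient $A$-dual $M^{*}$ as an $\As(Z)$-module up to the involution of $A$; under this identification Proposition \ref{aa1133c} rewrites $\mathcal{Q}_{\psi}$, up to sign, as the cup-product pairing on $H^{1}(Y,\partial Y;M)$ composed with evaluation against $[(Y,\partial Y)]$. Via Proposition \ref{aa1133c} and the splitting \eqref{mzmz}, I regard $M_{\mathbf{z}}$ as the free part of the quotient of $H^{1}(Y,\partial Y;M)$ obtained after also dividing out the boundary-invariant classes $\bigoplus_{i}\Ker(1-e_{z_{i}})$. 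The whole problem thus becomes the study of the adjoint $\widehat{\mathcal{Q}}_{\psi}\colon M_{\mathbf{z}}\to M_{\mathbf{z}}^{*}$.

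For (I) I would observe that non-degeneracy is detected over the fraction field $F=\mathrm{Frac}(A)$ (or over $A$ itself when $A$ is a field). Tensoring the relevant free complexes with $F$, Poincar\'e--Lefschetz duality supplies a perfect pairing $H^{1}(Y,\partial Y;M_{F})\times H^{1}(Y;M_{F})\to F$, and $\psi_{F}$ is perfect, so the cup-product form has radical exactly the image of the boundary. By Proposition \ref{pprop1} the submodule $\mathrm{Diag}(M)+\bigoplus_{i}\Ker(1-e_{z_{i}})$ already lies in the radical of $\mathcal{Q}_{\psi}$, and the perfectness just quoted shows that over $F$ the radical is no larger. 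As this submodule is precisely what is divided out in forming $M_{\mathbf{z}}$, the adjoint $\widehat{\mathcal{Q}}_{\psi}$ is injective after $\otimes F$, i.e. $\mathcal{Q}_{\psi}|_{M_{\mathbf{z}}}$ is non-degenerate.

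For (II) the form is unimodular exactly when the already-injective adjoint $\widehat{\mathcal{Q}}_{\psi}$ is onto, i.e. $\Coker(\widehat{\mathcal{Q}}_{\psi})=0$, so I must compute this cokernel integrally. I would use the explicit $A$-free chain complex $M\xrightarrow{\partial_{2}}M^{m}\xrightarrow{\partial_{1}}M$ coming by Fox calculus from the presentation $\langle a_{1},\dots,a_{m}\mid a_{1}\cdots a_{m}\rangle$ of $\pi_{1}(Y)$, for which $H_{0}=M/\mathcal{A}_{\mathbf{z}}$, $H_{2}=0$ and $H_{1}\cong H^{1}(Y,\partial Y;M)$, together with the boundary subcomplex $\bigoplus_{i}\bigl(M\xrightarrow{1-e_{z_{i}}}M\bigr)$ computing $H_{*}(\partial Y;M)$. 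Applying $\Hom_{A}(-,A)$ and the universal coefficient exact sequences introduces precisely the $\Ext^{1}_{A}$-terms of the statement, and identifies $\lambda$ and $\chi$ with the augmentation and the boundary-inclusion chain maps --- the constraint $\sum a_{i}=0$ built into $\chi$ reflecting that the boundary circles of the planar surface $Y$ sum to a null-homologous $1$-cycle. Splicing the long exact cohomology sequence of the pair $(Y,\partial Y)$ into these universal-coefficient sequences yields a commutative ladder, and a diagram chase then identifies $\Coker(\widehat{\mathcal{Q}}_{\psi})$ with the failure of $\chi^{*}$ to be injective on $\Coker(\lambda^{*})$, which is the asserted criterion.

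The main obstacle will be this last diagram chase in (II): making the abstract connecting maps and the universal-coefficient $\Ext^{1}$-terms coincide on the nose with the explicit modules $\mathcal{A}_{\mathbf{z}}$, $\mathcal{B}_{\mathbf{z}}$ and the maps $\lambda^{*}$, $\chi^{*}$. The delicate bookkeeping is of two kinds: tracking the involution of $A$ and the contragredient twist when carrying $\psi$ across the duality, and correctly interchanging the invariants $\Ker(1-e_{z_{i}})$ with the coinvariants on each boundary circle so that the torsion contributions land in the $\Ext^{1}$-groups as written. I would also have to check that the splitting defining the free summand $M_{\mathbf{z}}$ is compatible with the duality, so that $\Tor(\underline{M}_{\mathbf{z}})$ produces nothing spurious in $\Coker(\widehat{\mathcal{Q}}_{\psi})$. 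By comparison, Part (I) and the reductions through Propositions \ref{aa1133c} and \ref{pprop1} are routine.
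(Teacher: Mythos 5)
Your proposal is correct and follows essentially the same route as the paper's own proof: both pass through Proposition \ref{aa1133c} to the cup-product pairing on $H^1(Y,\partial Y;M)$, prove (I) by checking injectivity of the adjoint after tensoring with the fraction field via Poincar\'e--Lefschetz duality, and prove (II) by splicing the long exact sequence of the pair with the universal-coefficient/$\Ext^1$ sequences (the paper uses the standard free resolution of $\pi_1(Y)\cong F_{m-1}$ rather than Fox calculus on $\langle a_1,\dots,a_m\mid a_1\cdots a_m\rangle$, but this is the same computation) and a snake-lemma chase identifying $\Coker$ of the adjoint with $\Ker(\chi^*)$ on $\Coker(\lambda^*)$. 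The bookkeeping you flag as the main obstacle is exactly where the paper's argument does its work, and it goes through as you outline.
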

\noindent
The proof of Theorem \ref{aa333c} will appear in \S \ref{ss2092}.
Since the statement (II) seems a little complicated, we should mention an applicable corollary, which is immediately obtained from Theorem \ref{aa333c}:
\begin{cor}\label{aa3333c}
With notation as above, if $\psi$ is unimodular and all the cokernel $ \Coker(1- e_{z_i} )$ with $i \leq m$ has no torsion,
then the restriction $\mathcal{Q}_{\psi} $ on the summand $M_{\mathbf{z} }$ is unimodular.
\end{cor}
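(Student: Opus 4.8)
The plan is to feed the torsion-freeness hypothesis directly into the $\Ext$-criterion of Theorem \ref{aa333c}(II) and show that it forces the source of $\chi^*$ to vanish, so that the injectivity of $\chi^*$ becomes automatic. Recall that $\chi^*$ is defined on the cokernel $\Coker(\lambda^*)$, which is by construction a quotient of the middle $\Ext$-group, whose $i$-th summand is controlled by the torsion of $\Coker(1-e_{z_i})$. Hence it suffices to prove that this middle $\Ext$-group is zero; once that is done, $\Coker(\lambda^*)=0$, the source of $\chi^*$ is the zero module, $\chi^*$ is trivially injective, and Theorem \ref{aa333c}(II) immediately yields the unimodularity of $\mathcal{Q}_{\psi}$ restricted to $M_{\mathbf{z}}$.

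The one homological input I would isolate first is the behaviour of $\Ext^1_A(-,A)$ over the coefficient ring. If $A$ is a field there is nothing to prove, since every module is free and $\Ext^1_A(-,A)=0$. If $A$ is a Dedekind domain, then $A$ has global dimension at most $1$, so $\Ext^2_A(-,A)$ vanishes identically, and for a finitely generated $A$-module $N$ the short exact sequence $0\to \Tor(N)\to N\to N/\Tor(N)\to 0$ splits off a projective quotient $N/\Tor(N)$ by fact (i). Applying $\Hom_A(-,A)$ and using $\Ext^1_A(N/\Tor(N),A)=0$ gives a natural isomorphism $\Ext^1_A(N,A)\cong \Ext^1_A(\Tor(N),A)$; in particular $\Ext^1_A(N,A)=0$ precisely when $N$ is torsion-free.

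First I would apply this to $N=\Coker(1-e_{z_i})$ for each $i\le m$. By hypothesis every $\Coker(1-e_{z_i})$ is torsion-free, so $\Ext^1_A(\Coker(1-e_{z_i}),A)=0$. Since $\Ext^1_A(-,A)$ carries a finite direct sum in its first variable to the finite direct sum of the corresponding $\Ext$-groups, the entire middle term is zero. Consequently $\Coker(\lambda^*)$, being a quotient of the zero module, is itself zero, so $\chi^*$ is (vacuously) injective, and Theorem \ref{aa333c}(II) completes the proof.

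The only genuine point of care is the homological identification $\Ext^1_A(N,A)\cong \Ext^1_A(\Tor(N),A)$ and its vanishing exactly for torsion-free $N$. This rests squarely on the Dedekind hypothesis, through the global-dimension bound that kills the $\Ext^2$-obstruction and through the structure result (i), which exhibits $N/\Tor(N)$ as projective; everything else is formal long-exact-sequence bookkeeping together with the additivity of $\Ext$, and requires no calculation with the explicit submodules $\mathcal{A}_{\mathbf{z}}$ and $\mathcal{B}_{\mathbf{z}}$.
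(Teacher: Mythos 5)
Your proof is correct and is exactly the derivation the paper intends when it calls this corollary ``immediately obtained'' from Theorem \ref{aa333c}: the torsion-freeness of each $\Coker(1-e_{z_i})$ forces the middle $\Ext^1$-term to vanish, hence $\Coker(\lambda^*)=0$ and $\chi^*$ is vacuously injective. Note only that you have (correctly) read that middle term as $\Ext^1_A\bigl(\bigoplus_i \Coker(1-e_{z_i}),A\bigr)$, in line with the identification $H_0(\partial Y;M)\cong \bigoplus_i\Coker(1-e_{z_i})$ used in the proof of Theorem \ref{aa333c}(II), rather than as the literal $\Ext^1_A\bigl(\bigoplus_i M/\Ker(1-e_{z_i}),A\bigr)$ printed in the theorem statement, which would be the $\Ext^1$ of a projective module and hence always zero regardless of the hypothesis.
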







\section{Invariants from symplectic representations}\label{ss2132}

We will study the bilinear forms $\mathcal{Q}_{\psi}$ obtained from
the standard symplectic representations, and give some applications.
Section \ref{ssg2} examines the rational part. 
To this end, Section \S \ref{ss2112} explains an outline for computing $\mathcal{Q}_{\psi}$.

\subsection{Preliminaries to compute the bilinear forms}\label{ss2112}
We first describe an outline for computing the isomorphisms class of $\mathcal{Q}_{\psi}$, on the basis of some knowledge in \cite{HM}.
This section assumes
that $M$ is a finitely generated free $A$-module.

Following Theorem \ref{aa333c},
we will discuss $\mathcal{Q}_{\psi}$ over a Dedekind domain $A$.
Recall the projective submodule $M_{\mathbf{z} } \subset M^m $ defined in \S \ref{ss21192}.
Then, as the properties (i)(ii) of Dedekind domains, this $ M_{\mathbf{z} }$ is a direct summand of $M^m$, and is isomorphic to
$A^{N-1} \oplus \mathcal{I} $ for some $N \in \mathbb{N}$ and ideal $\mathcal{I} \subset A $.
Therefore, we can choose a basis $v_1, \dots, v_{N} $, and 
we therefore formulate the bilinear form $ \mathcal{Q}_{\psi}$
as the $N \times N$-matrix, $W$, with entries $ \mathcal{Q}_{\psi}(v_i,v_j)\in A$.
Here we remark, thanks to Theorem \ref{aa333c} (i), that this $W$ is equivalent to the original $ \mathcal{Q}_{\psi} $,
and that, if $\psi$ is non-degenerate, then this $W$ is non-degenerate, i.e., $\det(W) \in A \setminus\{ 0\}$. 
In practice, although 
the ranks of $W$ are often large,
we can list such $W$ by the help of computer programs found
in software such as Mathematica.
Then, using the study of non-degenerate (or unimodular) bilinear forms over some rings (see, e.g., \cite{HM}),
we can sometimes determine some isomorphism classes of $W$ and obtain something quantitative from
the class (e.g., rank, signature) in some cases. 

As an example, we particularly comment the forms $ \mathcal{Q}_{\psi} $ when $A = \Z$ and $\psi$ is anti-symmetric and unimodular.
Theorem \ref{aa333c} implies that the matrix $W$ is symmetric and non-degenerate.
Moreover, if the condition in Corollary \ref{aa3333c} holds and
the matrix $W$ is indefinite, then
we may compute only the eigenvalues of $W$ and check whether the all
diagonal entries $ \mathcal{Q}_{\psi} (v_i,v_i)$ are even or not.
Actually, by virtue of Serre's classification theorem (see \cite[Chapters I, II]{HM} for details),
one can determine the bilinear form $W$ up to bilinear isomorphisms over $\Z$.
Furthermore, we also mention some definite matrices;
Actually, we see later some definite $W$'s.
However, the ranks are fortunately $\leq 16$; by the classifications of the definite unimodular bilinear forms of small rank (see \cite[Chapter II.6]{HM}),
we later determine some isomorphism classes of $\mathcal{Q}_{\psi}$ following the above line;
see \S \ref{ssg2} for concrete calculations.

For \S \ref{s3s331wlw}, we also comment coefficient rings of the forms $A= \mathbb{F}_p [T]/(T^p) $ for some prime $p \in \mathbb{N} $.
In general, it is hard to determine isomorphism classes of $\mathcal{Q}_{\psi}$, even if $\mathcal{Q}_{\psi}$ is symmetric. 
However, as a special case, if the image of $\mathcal{Q}_{\psi}$ is over the ideal $(T^{p-1})$,
the class of $ \mathcal{Q}_{\psi}$ is detected by the study of the Witt group $ W(\mathbb{F}_p )$ (see \cite[\S I.7 and Appendixes]{HM});
For example, if $p=2$, the class of $ \mathcal{Q}_{\psi}$ is detected by the
rank and the parity of diagonal entries $ \mathcal{Q}_{\psi} (v_i,v_i)$ that is
called Arf invariant.



\subsection{Computations in symplectic case}\label{ssg2}
Henceforth, we will compute the invariants of some Lefschetz fibrations
by means explained in the preceding section.
So, throughout this section, we assume that $ G$ is the mapping class group $\mathcal{M}_g$ with $g \geq 2$,
and $Z$ is the set $\D$ of the Dehn twists as in Example \ref{LFrei}.
This subsection discusses the standard symplectic representation of $\M_g$.
Namely, $M= H_1(\Sigma_g ;A )=A^{2g}$
and we set $\psi =\omega_{A} : M ^2 \ra A$ as the symplectic form.

We start by establishing some terminologies. 
For an $m$-tuple $(z_1, \dots, z_m) \in (\D)^m$,
let $E_{\mathbf{z}} $ be the (4-dimensional) total space of the corresponding fibration. 
The symbol $\sigma $ denotes the signature of the closed 4-manifold $E_{\mathbf{z}}$, 
and the Euler characteristic of the space $E_{\mathbf{z}}$ is known to be $ 4 - 4g + m $ (see \cite{EN,Ozb,Matsumoto}).
The fibration $ E_{\mathbf{z}} \ra S^2 $ 
is said to be {\it of type} $(m_{\rm ns},m-m_{\rm ns} ) $ if $m_{\rm ns}$ is
the number of $z_i$'s contained in $\DD$ \eqref{defdg11}.
Furthermore, for another fibration $\eta$
associated with $(z_1', \dots, z'_{m'}) \in (\D)^{m'}$,
{\it the (twist) fiber sum along $h \in \M_g$} is
defined to be the fibration arising from the $(m+m')$-tuples
$(z_1, \dots, z_m, h^{-1}z_1'h , \dots, h^{-1}z'_{m'}h) \in (\D )^{m+m'}$,
for which we write $ \xi \sharp_{h } \eta $.

In addition, we will explain some examples of Lefschetz fibrations.
Consider three tuples of the forms
\[ \xi_1 : \ (c_1 \cdot c_2 \cdots c_{2g} \cdot c_{2g+1}^2 \cdot c_{2g}\cdot c_{2g-1} \cdots c_1 )^2, \]
\[ \xi_2 : \ (c_1 \cdot c_2 \cdots c_{2g} \cdot c_{2g+1} )^{2g+2},\]
\[ \xi_3 : \ (c_1 \cdot c_2 \cdots c_{2g} )^{4g+2} .\]
Here $c_i$ is the Dehn twist of the curves $\gamma_i $ in Figure \ref{tg261}. 
Then, they are well-known to be identities in $\M_g$ as the chain relation
(The associated Lefschetz fibrations are understood well; see \cite[\S 4.1]{EN}).
Furthermore, as seen in Table \ref{G3},
we also deal with other Lefschetz fibrations.


Next, we comment rationally the rank and unimodularity of the bilinear forms $ \mathcal{Q}_{\omega_{\Z}}$,
and show that $ \mathcal{Q}_{\omega_{\Z}}$ includes rationally topological information of Lefschetz fibrations as follows: 
\begin{thm}\label{akdsj} Let $A=\Z$. 
Let $\xi_{\mathbf{z}} $ be a Lefschetz fibration $ E_{\mathbf{z}} \ra S^2$ associated with an $m$-tuple $\mathbf{z} $ of type $(m_{\rm ns},m- m_{\rm ns})$.
Take the homomorphism $\Gamma_{\mathbf{z}}$ in Definition \ref{aa11c} and the free module $\underline{M}_{\mathbf{z}}$ explained in Theorem \ref{aa333c}.
Then, the ranks are computed as
$$ \mathrm{rk} (\underline{M}_{\mathbf{z}} ) = m_{\rm ns} -4g + 2 b_1( E_{\mathbf{z}}), \ \ \ \ \ \ \ \mathrm{rk} (\Ker \Gamma_{\mathbf{z}} ) = 2gm-2g+  b_1( E_{\mathbf{z}}), $$
where $ b_1( E_{\mathbf{z}})$ is the first betti number of the 4-manifold $ E_{\mathbf{z}}$.

Moreover, the signature of the resulting symmetric form $ \mathcal{Q}_{\omega_{\Z}} $ is equal to $\sigma_{\mathbf{z}} -m+m_{\rm ns} $.
\end{thm}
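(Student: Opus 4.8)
The plan is to handle the two rank formulas and the signature formula separately, using the cohomological interpretation of $\mathcal{Q}_{\psi}$ from Proposition \ref{aa1133c} as the main bridge in each case.

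First, for the ranks, I would invoke Proposition \ref{aa1133c}, which applies since the standing assumption $e_{z_1}\cdots e_{z_m}=\mathrm{id}_M$ holds here by Example \ref{exa1}; it splits $\Ker(\Gamma_{\mathbf{z}})$ as $\mathrm{Diag}(M)\oplus H^1(Y,\partial Y;M)$ with $M=H_1(\Sigma_g;\Z)$ the symplectic local system and $Y$ the sphere $S^2$ with $m$ discs removed. It then suffices to compute $\mathrm{rk}\,H^1(Y,\partial Y;M)$. I would use that each Dehn twist acts on $M$ as a transvection, so $\mathrm{rk}\,\Ker(1-e_{z_i})$ equals $2g-1$ for a non-separating cycle and $2g$ for a separating one, whence $\sum_i \mathrm{rk}\,\Ker(1-e_{z_i})=2gm-m_{\rm ns}$. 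Combining multiplicativity of the twisted Euler characteristic, $\chi(Y;M)=(2-m)\cdot 2g$, with the standard identification of $b_1(E_{\mathbf{z}})$ with the rank of the invariants (equivalently the coinvariants) of the monodromy action on $M$, I obtain $\mathrm{rk}\,H^1(Y,\partial Y;M)=b_1(E_{\mathbf{z}})+2gm-4g$, which yields the claimed value $\mathrm{rk}(\Ker\Gamma_{\mathbf{z}})=2gm-2g+b_1(E_{\mathbf{z}})$.

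For $\underline{M}_{\mathbf{z}}$ I would subtract from $\mathrm{rk}(\Ker\Gamma_{\mathbf{z}})$ the rank of $\mathrm{Diag}(M)+\bigoplus_i \Ker(1-e_{z_i})$, all of which lies in $\Ker\Gamma_{\mathbf{z}}$ by Proposition \ref{pprop1}. By inclusion–exclusion this rank is $2g+(2gm-m_{\rm ns})-b_1(E_{\mathbf{z}})$, since the intersection $\mathrm{Diag}(M)\cap\bigoplus_i\Ker(1-e_{z_i})$ is the diagonal copy of the invariants $\bigcap_i\Ker(1-e_{z_i})$, of rank $b_1(E_{\mathbf{z}})$. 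Subtracting gives $\mathrm{rk}(\underline{M}_{\mathbf{z}})=m_{\rm ns}-4g+2b_1(E_{\mathbf{z}})$, as asserted.

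For the signature I would again start from Proposition \ref{aa1133c}: over $A=\R$ the form $\mathcal{Q}_{\omega_{\R}}$ is, up to sign and modulo its radical (which is exactly what the passage to $\underline{M}_{\mathbf{z}}$ discards), the cup-product pairing on $H^1(Y,\partial Y;H_1(\Sigma_g;\R))$ followed by symplectic evaluation. This is precisely the intersection form computed by Meyer's signature cocycle for the $\Sigma_g$-bundle over the planar surface $Y$, so its signature equals the signature $\sigma(E^{\circ})$ of the bundle part $E^{\circ}$ obtained by deleting neighborhoods of the singular fibers. It then remains to compare $\sigma(E^{\circ})$ with $\sigma_{\mathbf{z}}$. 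I would decompose $S^2$ as $Y\cup(D_1\sqcup\cdots\sqcup D_m)$ and apply Novikov additivity together with Wall's non-additivity correction along the separating hypersurfaces (the $\Sigma_g$-bundles over the boundary circles), each singular-fiber neighborhood being a Lefschetz fibration over a disc with one singular fiber whose local signature I would compute directly.

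The hardest part is this last comparison: pinning down the local signature contribution of each singular fiber and correctly accounting for the Wall/Novikov correction terms upon regluing. The delicate point is that a separating vanishing cycle acts trivially on $H_1(\Sigma_g;\R)$ and is therefore invisible to $\mathcal{Q}_{\omega_{\R}}$, yet each such fiber still shifts the four-dimensional signature; the net effect is that non-separating cycles contribute $0$ while separating ones contribute $+1$, giving a total discrepancy $m-m_{\rm ns}$ and hence $\mathrm{sign}(\mathcal{Q}_{\omega_{\Z}})=\sigma_{\mathbf{z}}-m+m_{\rm ns}$. This is exactly the content of the modified Meyer cocycle, whose careful verification I would defer to Appendix \ref{asss303k1}.
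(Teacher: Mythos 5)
Your overall strategy is the paper's: both rank formulas and the signature are routed through Proposition \ref{aa1133c}, which turns $\mathcal{Q}_{\omega}$ into the cup-product pairing on $H^1(Y,\partial Y;M)$. Where you genuinely diverge is the rank computation. The paper runs the Leray--Serre spectral sequence of $\pi:(E^{\rm reg}_{\mathbf{z}},\partial E^{\rm reg}_{\mathbf{z}})\to(Y,\partial Y)$, computes $H^2(E^{\rm reg}_{\mathbf{z}},\partial E^{\rm reg}_{\mathbf{z}};\Q)$ from the long exact sequence of the pair $(E_{\mathbf{z}},W)$ with $W=\bigsqcup_i\pi^{-1}(D_i)$ together with $\chi(E_{\mathbf{z}})=4-4g+m$ and Lemma \ref{a2a5dd523}, and then extracts $\rank\underline{M}_{\mathbf{z}}$ as $\rank\Coker(\delta^*)$ from the sequence \eqref{ajsk}. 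You stay entirely on the base: Lefschetz duality $H^1(Y,\partial Y;M)\cong H_1(Y;M)$, the twisted Euler characteristic $\chi(Y;M)=(2-m)\cdot 2g$, and the single topological input $M_{\pi_1(Y)}\cong H_1(E_{\mathbf{z}};\Q)$ (the last part of Lemma \ref{a2a5dd523}); then inclusion--exclusion inside $M^m$ for $\underline{M}_{\mathbf{z}}$, using the transvection count $\sum_i\rank\Ker(1-e_{z_i})=2gm-m_{\rm ns}$ and $\mathrm{Diag}(M)\cap\bigl(\bigoplus_i\Ker(1-e_{z_i})\bigr)\cong M^{\pi_1(Y)}$. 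This is shorter, avoids the four-dimensional spectral sequence and the Betti numbers of the pieces $\pi^{-1}(D_i)$, and lands exactly on the stated formulas (it also sidesteps some arithmetic slips in the paper's own chase). One small thing to add: a justification that $\rank M^{\pi_1(Y)}=\rank M_{\pi_1(Y)}$, e.g.\ via the symplectic self-duality of $M$, since you use invariants and coinvariants interchangeably.

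For the signature your route is the same as the paper's (cup product equals the Meyer signature of $E^{\rm reg}_{\mathbf{z}}$, then additivity against the local pieces), except that you propose to compute the local contributions yourself rather than cite Ozbagci or Endo--Nagami; note that no Wall correction is actually needed, since the gluing is along the closed $3$-manifolds $\partial\pi^{-1}(D_i)$ and plain Novikov additivity applies. The one step that would fail as written is the local bookkeeping: for a separating vanishing cycle the standard handle picture gives intersection form $0\oplus(-1)$ on $H_2(\pi^{-1}(D_i))$, i.e.\ local signature $-1$, not $+1$, which via Novikov additivity yields $\sigma(E^{\rm reg}_{\mathbf{z}})=\sigma_{\mathbf{z}}+m-m_{\rm ns}$. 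That is in fact what the paper's own proof concludes, and it disagrees in sign with the displayed statement $\sigma_{\mathbf{z}}-m+m_{\rm ns}$ (the two agree only when $m=m_{\rm ns}$, which happens to cover most rows of Table \ref{G3}). So your final formula matches the theorem as printed, but the argument you sketch, carried out correctly, produces the other sign; this discrepancy sits in the paper itself and needs to be resolved rather than absorbed into the choice of local contribution.
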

\noindent
We defer the proof until \S \ref{ss2092}.
As a corollary, by the unimodularity in Corollary \ref{aa3333c}, we immediately have the following:
\begin{cor}\label{akdsj1}
The bilinear form $ \mathcal{Q}_{\omega_{\Z}}$
is equivalent to a unimodular symmetric bilinear form on $\Z^{m_{\rm ns} -4g + 2 b_1 }$ of signature $\sigma_{\mathbf{z}} -m+m_{\rm ns} $.
\end{cor}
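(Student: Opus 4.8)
The plan is to deduce Corollary \ref{akdsj1} by combining the rank and signature computation of Theorem \ref{akdsj} with the unimodularity criterion of Corollary \ref{aa3333c}. First I would observe that the hypotheses of both results are compatible: we are in the symplectic setting $A=\Z$, $M=H_1(\Sigma_g;\Z)=\Z^{2g}$, and $\psi=\omega_{\Z}$ is the standard symplectic form, which is anti-symmetric and unimodular. By Proposition \ref{procol31}(II), since $\psi$ is skew-hermitian, the resulting form $\mathcal{Q}_{\omega_{\Z}}$ is hermitian; over $A=\Z$ with trivial involution this means $\mathcal{Q}_{\omega_{\Z}}$ is a symmetric bilinear form. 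Theorem \ref{akdsj} already records that its signature is $\sigma_{\mathbf{z}}-m+m_{\rm ns}$ and that it lives on the free module $\underline{M}_{\mathbf{z}}$ of rank $m_{\rm ns}-4g+2b_1(E_{\mathbf{z}})$.

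Next I would verify the torsion-freeness hypothesis of Corollary \ref{aa3333c}, namely that each cokernel $\Coker(1-e_{z_i}:M\ra M)$ has no torsion. Here each $z_i\in\D$ is a Dehn twist $\tau_{\alpha}$ acting on $H_1(\Sigma_g;\Z)$ by the Picard--Lefschetz (transvection) formula $x\mapsto x+\langle x,[\alpha]\rangle[\alpha]$, so the endomorphism $1-e_{z_i}$ sends $x$ to $-\langle x,[\alpha]\rangle[\alpha]$, whose image is the cyclic subgroup generated by $[\alpha]$. Since $[\alpha]$ is a primitive class in $H_1(\Sigma_g;\Z)$ (for a non-separating curve, and for a separating curve the twist acts trivially so the cokernel is all of $\Z^{2g}$, which is free), the cokernel $\Z^{2g}/\langle[\alpha]\rangle$ is free. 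Thus the torsion-freeness condition holds for every index $i$, and Corollary \ref{aa3333c} applies to give that $\mathcal{Q}_{\omega_{\Z}}$ restricted to $M_{\mathbf{z}}$ is unimodular.

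Finally I would assemble the pieces. By the construction recalled in \S\ref{ss2112} and by Theorem \ref{aa333c}(I) (using non-degeneracy of $\psi$), the form $\mathcal{Q}_{\omega_{\Z}}$ on $\underline{M}_{\mathbf{z}}$ is equivalent to its restriction to the free summand $M_{\mathbf{z}}\cong\Z^{N}$, and the preceding paragraph upgrades non-degeneracy to unimodularity. The rank $N=m_{\rm ns}-4g+2b_1(E_{\mathbf{z}})$ and the signature $\sigma_{\mathbf{z}}-m+m_{\rm ns}$ are exactly those supplied by Theorem \ref{akdsj}, and symmetry follows from Proposition \ref{procol31}(II) as above. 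Collecting these facts yields precisely the assertion that $\mathcal{Q}_{\omega_{\Z}}$ is equivalent to a unimodular symmetric bilinear form on $\Z^{m_{\rm ns}-4g+2b_1}$ of signature $\sigma_{\mathbf{z}}-m+m_{\rm ns}$.

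The main obstacle I expect is not conceptual but bookkeeping: one must be careful that the torsion-freeness of $\Coker(1-e_{z_i})$ really holds uniformly over all curves appearing in $\mathbf{z}$, including the interaction between the primitivity of homology classes of non-separating curves and the vanishing action of separating twists, so that Corollary \ref{aa3333c} genuinely applies rather than only the weaker Theorem \ref{aa333c}(II). Once that verification is in place, the corollary is essentially a direct transcription of Theorem \ref{akdsj} through the unimodularity criterion.
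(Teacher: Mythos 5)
Your proposal is correct and follows essentially the same route as the paper, which obtains this corollary immediately by combining the rank and signature computations of Theorem \ref{akdsj} with the unimodularity criterion of Corollary \ref{aa3333c}. Your explicit verification via the Picard--Lefschetz transvection formula that each $\Coker(1-e_{z_i})$ is torsion-free (primitive class for a non-separating curve, trivial action for a separating one) is a detail the paper leaves implicit, and you carry it out correctly.
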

\noindent

Accordingly, following the outline mentioned in \S \ref{ss2112}, if
knowing whether $ \mathcal{Q}_{\omega_{\Z}} $ is of type I or II,
we succeeded in determining the isomorphism class of $\mathcal{Q}_{\omega_{\Z}} $.
In fact, here is Table \ref{G3} on
the resulting forms of some Lefschetz fibrations with $g=2$ and $g=3$; 
Especially, the column on the symbol $\mathcal{Q}_{\omega_{\Z}} $
is with respect to isomorphism classes of the bilinear forms associated with the symplectic form $\omega_{\Z}$. 
Here $E_8$ is the famous Cartan matrix as a unimodular quadratic form of rank $8$
and of signature $-8$, and
$\mathcal{H}_c$ is the hyperbolic form $ {\small \left(
\begin{array}{cc}
0 & c \\
c & 0
\end{array}
\right)} $ for some $c \in A $.
As seen in the column, the type of $ \mathcal{Q}_{\omega_{\Z}} $ is frequently useful:
Actually, the pairs ($ \xi_{PJ}$, $\xi_{RI}$) and
$(\xi_1 \sharp_{c_1} \xi_1, \xi_3 ) $ with $g=2$ are shown to be non-isomorphic by the types (though these results are
proved also by considering the images of the tuple $ (z_1, \dots, z_m)$ in the modular group $ Sp(2g;\Z) $).
Here we shall cite a stronger result of H. Sato \cite{Sato}: Precisely, the total spaces associated with $\xi_{PJ}$ and $\xi_{RI}$
are non-diffeomorphic as a result in studying the (relative) Kodaira dimensions.


\begin{table}[htbp]
\large
\begin{center}
\begin{tabular}{|c|c| c|c|c|c|c|} \hline
Fibration & $g$ & Type& $\sigma$ & $ \mathcal{Q}_{\omega_{\Z}}$ &$ \mathcal{Q}_{\omega_{\rm spin}^{\rm odd}}$ & $ \mathcal{Q}_{\omega_{\rm spin}^{\rm ev}}$ \\ \hline \hline
$\xi_1$& 2 & (20,0) &$-12$ & $ (-1)^{12} 0^{64}$ & $ \mathcal{H}_{ {\tt y}}^{28} 0^{ 87}$ & $ \mathcal{H}_{ {\tt y}}^{50} 0^{ 141}$ \\ \hline
$ \xi_2$& 2 & (30,0)& $-18$ & $(-1)^{20} 1^2 0^{94}$ & $ \mathcal{H}_{ {\tt y}}^{48} 0^{ 127}$ & $ \mathcal{H}_{ {\tt y}}^{80} 0^{ 211}$ \\ \hline
$ \xi_3 $&2 & (40,0)& $-2$4& $ \mathcal{H}_1^4 (E_8)^3 0^{124}$ & $ \mathcal{H}_{ {\tt y}}^{70} 0^{ 166}$ & $ \mathcal{H}_{ {\tt y}}^{108} 0^{ 283}$ \\ \hline
$\xi_1 \sharp_{\rm id} \xi_1$ &2 & (40,0) & $-24$& $(-1)^{28} 1^4 0^{124}$ & $ \mathcal{H}_{ {\tt y}}^{68} 0^{ 169}$ & $ \mathcal{H}_{ {\tt y}}^{110} 0^{ 281}$ \\ \hline
$\xi_{RI}$ & 2 & (28,1)& $-17$ &$ 1^{2} (-1)^{18} 0^{92}$ & $ \mathcal{H}_{ {\tt y}}^{56} 0^{ 112}$& $ \mathcal{H}_{\tt y}^{74} 0^{207}$ \\ \hline
$\xi_{PJ}$ & 2 & $(28,1)$ & $ -17 $ & $\mathcal{H}_1^2 (E_8)^{2} 0^{92}$ & $ \mathcal{H}_{ {\tt y}}^{54} 0^{ 115}$& $ \mathcal{H}_{\tt y}^{72} 0^{209}$\\ \hline \hline
$\xi_1$ & 3 &$(28,0)$ & $ -16 $& $(E_8)^{2} 0^{146 }$ & $ \mathcal{H}_{\tt y}^{114} 0^{643}$ & $ \mathcal{H}_{\tt y}^{238} 0^{735}$\\ \hline
$\xi_1\sharp_{\rm id} \xi_1 $ &3 & $(56 ,0)$ & $ -32 $& $ \mathcal{H}_1^{6}(E_8)^{4}0^{286}$ & $ \mathcal{H}_{ {\tt y}}^{282} 0^{ 1259}$ & $ \mathcal{H}_{ {\tt y}}^{546}0^{1436} $ \\ \hline
$\xi_1\sharp_{ d } \xi_1 $ &3 & $(56 ,0)$ & $ -32 $& $ 1^{6} (-1)^{38 } 0^{286}$ & $ \mathcal{H}_{ {\tt y}}^{282} 0^{ 1259}$ & $ \mathcal{H}_{ {\tt y}}^{544}0^{1437} $ \\ \hline
$\xi_2$ &3 & $(56,0)$ & $ -32$& $ \mathcal{H}_1^{6}(E_8)^{4}0^{286}$ & $ \mathcal{H}_{ {\tt y}}^{282} 0^{ 1259}$ & $ \mathcal{H}_{ {\tt y}}^{546}0^{1436}$ \\ \hline
$\xi_3$ &3 & $(84,0)$ & $ -48 $& $ \mathcal{H}_1^{12}(E_8)^{6}0^{426}$ & $ \mathcal{H}_{ {\tt y}}^{452} 0^{ 1874} $ & $ \mathcal{H}_{ {\tt y}}^{854} 0^{2136 }$ \\ \hline
$\xi_2\sharp_{\rm id} \xi_1 $ &3 & $(84,0)$ & $ -48 $& $ \mathcal{H}_1^{12}(E_8)^{6}0^{426}$&$ \mathcal{H}_{ {\tt y}}^{450} 0^{ 1875} $ & $ \mathcal{H}_{ {\tt y}}^{854} 0^{2136} $ \\ \hline
$\xi_2\sharp_{ d} \xi_1 $ &3 & $(84,0)$ & $ -48 $& $ 1^{12}(-1)^{60} 0^{426}$&$ \mathcal{H}_{ {\tt y}}^{450} 0^{ 1876} $ & $ \mathcal{H}_{ {\tt y}}^{852} 0^{2137 }$ \\ \hline
$\xi_{CK}$ &3 & $(16,0)$ & $-8 $&$ E_80^{84}$ & $ \mathcal{H}_{ {\tt y}}^{54} 0^{373}$ & $ \mathcal{H}_{ {\tt y}}^{110} 0^{434}$ \\ \hline
$M_Q$ &3& $(44,1)$ & $ -25 $ & $ \mathcal{H}_1^{4}(E_8)^{3}0^{232}$ & $ \mathcal{H}_{ {\tt y}}^{210} 0^{1023}$ & $ \mathcal{H}_{ {\tt y}}^{414}0^{1172} $\\ \hline
$M_R$ &3& $(44,1)$ & $ -25$ & $ \mathcal{H}_1^{4}(E_8)^{3}0^{232}$ & $ \mathcal{H}_{ {\tt y}}^{210} 0^{1023}$ & $ \mathcal{H}_{ {\tt y}}^{414 }0^{1172} $ \\ \hline
\end{tabular}
\end{center}
\caption{The bilinear-form invariants associated with some $g$-genus Lefschetz fibrations with $g=2, \ 3 $.
Here global monodromies of the fibrations 
$\xi_{PJ}, \ \xi_{RI}$ with $g=2$ and $\xi_{CK}, \ M_Q, \ M_R $ with $g=3 $ are formulated in \cite[\S 4.1-2]{Endo};
Furthermore, the sixth and seventh columns $ \mathcal{Q}_{\omega_{\rm spin}^{\rm ev}}$ and $ \mathcal{Q}_{\omega_{\rm spin}^{\rm odd}}$
will be explained in \S \ref{a2k2lf}.
We write $\sharp_{ d}$ for the Dehn twist along the curve in Figure \ref{tg261}.
}
\label{G3}
\end{table}


To end the integral symplectic case,
we demonstrate from Table \ref{G3} that the bilinear form $\mathcal{Q}_{\omega_{\Z}}$ does not coincide with the usual cohomology ring structures with integral coefficients.
First, the form of the fibration $\xi_{CK}$ with $g=3$ is $E_8$
(cf. Donaldson's Theorem $A$ which implies that no differential closed 4-manifold admits the cohomology ring of the form $E_8$).
Furthermore,
while the simple connected space $\xi_3 $ with $g=2$ has no spin structure (use the Rokhlin theorem),
the form $\mathcal{Q}_{\omega_{\Z}}$ contain the matrix $E_8$. Thus, we pose a problem:
\begin{prob}\label{new1331}
Describe a sufficient and necessary condition for which
the unimodular form $ \mathcal{Q}_{\omega_{\Z}} $ is of type II.
\end{prob}
As seen in the diferrence between $\xi_2\sharp_{\rm id} \xi_1 $ and $\xi_2\sharp_{ d} \xi_1 $ in Table {G3}, 
we see that the torsion parts are not always topological.
Thus, we hope a usefulness of the torsion invariant. 
 




\section{Invariants from the quantum representations}\label{s3s331wlw}
In this section, we will explain
that the formulation of our invariants is applicable for
the quantum representations.

For the purpose, we briefly review the quantum representations.
First we fix a semi-simple Lie group $\mathcal{G}$ and an integer $k \in \Z $ called {\it level}.
Then E. Witten \cite{Wi} has predicted that the Chern-Simons quantum field theory of level $k$
yields a (2+1)-topological quantum field theory $\mathcal{Q}_{\mathcal{G},k }$ (with ambiguity of framings).
As a result, the restriction of $\mathcal{Q}_{\mathcal{G},k }$ on the
single object $\Sigma_g$ is regarded as a linear representation $\M_g$ of some $\mathbb{C}$-vector space $ V_{\mathcal{G},k}$,
which is commonly 
called the {\it quantum representation}.
However, to strictly formulate the TQFT in mathematical terms,
we have to equip the TQFT with ``$p_1$-structure" (see \cite[Appendix B]{BHMV} for the definition); thus,
by the reason of the obstruction of $p_1$,
the associated representations are often right modules not of $\M_{g,r}$, but of $\mathcal{T}_{g,r}$.
Here, $\mathcal{T}_{g,r} $ is a central $\Z$-extension of $\M_{g,r}$,
\begin{equation}\label{tyuusing2}0 \lra \Z \lra \mathcal{T}_{g,r} \xrightarrow{\ \mathrm{proj.}\ } \M_{g,r} \lra 0 \ \ \ \ \ \ \ {\rm (central \ extension) }. \end{equation}
Further, if $g \geq 3$, this $\mathcal{T}_{g,r}$ is universal in the sence of the group {\it co}homology $H^2(\M_{g,r} ;\Z) \cong \Z$; see \cite{FM}.
Moreover, since the $\mathcal{T}_{g,r}$-module $V_{\mathcal{G},k} $ is unitary by the topological invariance of the TQFT, we get
a non-singular $\mathcal{T}_{g,r} $-invariant Hermite bilinear form on $V_{\mathcal{G},k} $.
Furthermore, there is a conjecture that
the space $V_{\mathcal{G},k} $ would admit a lattice over the ring of integers in some number field $F \subset \mathbb{C}$
under which the action of $\mathcal{T}_{g,r} $ is closed.
Although we only refer the reader to \cite{BHMV,GMW,Ker,Wi} for more details,
we remark that 
the dimension of $V_{\mathcal{G},k} $ 
becomes exponentially more complicated as $k $-increase, that is, Velinder formula.


Thus, to specialize $\mathcal{Q}_{\psi}$ of small ranks,
we shall mention only easy examples of the TQFT.
First, if $ \mathcal{G}$ is the unitary group $U(1)$, then
the resulting module is the symplectic representation on $H_1(\Sigma_g)$,
and the bilinear form is the symplectic form, which we used in \S \ref{ss2132}.
In addition, if $ \mathcal{G}$ is $SU(2)$ or $SO(3)$,
the Kauffman bracket gives a description of the TQFT explicitly (see \cite{BHMV});
further, for any prime $p \in \mathbb{N}$,
the $SO(3)$-quantum representation of level $(p-1)/2$ is
over the ring of integers in the cyclotomic field $\Q(\zeta_{p})$ (see, e.g., \cite{Gil,GMW} and references therein),
though
the computations in $\Q(\zeta_{p})$ are often hardly sophisticated as usual in quantum topology. 
However, if the TQFT is closed under the integral closed ring $\Z[\zeta_{p^t }] $ for some $t \in \mathbb{N}$
such as $SO(3)$-case, the reduced TQFT via a base change 
\begin{equation}\label{opp} \Z[\zeta_{p^t}] \lra \mathbb{F}_p [ {\tt y}]/( {\tt y}^{p}); \ \ \ \ \ \zeta_{p^t} \longmapsto 1+{\tt y}
\end{equation}
is more manageable, together with a relation to Casson invariant modulo $p$ (see \cite{Ker}).
Such being the case, the next subsection will mainly focus on such reduced TQFTs.


\subsection{With the $SU(2)$-gauge of level $2 $. }\label{a2k2lf}
We will return to study Lefschetz fibrations.
Let $G = \M_g$ and $Z = \D$ as in Example \ref{LFrei}.
Then, as was shown \cite{Nos1}, $\As(\D) $ includes the universal central extension \eqref{tyuusing2} as follows:
\begin{thm}[{\cite{Nos1}}]\label{Prop12}
(I) If $ g \geq 3$, there is a group isomorphism $\As(\D) \cong \mathcal{T}_{g} \times \Z^{[g/2]+2} $.

\noindent
(II) If $g=2$, there are a $\Z$-central extension $ \mathcal{T}_{2} \ra \M_2$ and
an isomorphism $\As(\D) \cong \mathcal{T}_{2} \times \Z^2 $.
\end{thm}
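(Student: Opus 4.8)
The plan is to identify $\As(\D)$ explicitly by combining a known finite presentation of $\As(\D)$ with the structure of the mapping class group. First I would recall that the associated group $\As(\D)$ is generated by the symbols $e_{\tau_\alpha}$ subject to the relations $e_{w^{-1}\tau_\alpha w}=e_w^{-1}e_{\tau_\alpha}e_w$, and that the homomorphism $\mathcal{E}:\As(\D)\to\M_g$ of \eqref{bba} is surjective since the Dehn twists generate $\M_g$. The key observation, already exploited in the paper, is that $\Ker(\mathcal{E})$ is central; so $\As(\D)$ is a central extension of $\M_g$ by the abelian group $\Ker(\mathcal{E})$. The whole problem therefore reduces to two tasks: computing the abelian group $\Ker(\mathcal{E})$, and identifying the class of the extension in $H^2(\M_g;\Ker(\mathcal{E}))$.

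For the computation of $\Ker(\mathcal{E})$, I would use the standard finite presentation of $\M_g$ by Dehn twists (Wajnryb-type generators and relations) and lift each defining relation of $\M_g$ to a word in the generators $e_{\tau_\alpha}$ of $\As(\D)$. Each such lift gives a central element of $\As(\D)$, and $\Ker(\mathcal{E})$ is precisely the abelianization of $\As(\D)$ generated by these lifted relators together with the fact that all $e_{\tau_\alpha}$ with $\alpha$ non-separating become identified in the abelianization (since non-separating curves form a single mapping class group orbit, their twists are all conjugate, hence their $e$-classes agree abelianly). The relevant relators are the ones appearing in Example \ref{exa1222}: the $3$-chain relation produces $\kappa_{3\text{-}\mathrm{chain}}$ and the lantern relation produces $\kappa_{\rm lantern}$, and these, together with the separating-twist classes indexed by genus, are exactly the central generators. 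A careful bookkeeping of how many independent central generators survive yields the rank $[g/2]+2$ of the free abelian complementary factor $\Z^{[g/2]+2}$ (the separating simple closed curves fall into $[g/2]$ topological types, contributing that many independent classes, plus the two classes coming from the chain and lantern relations after one relation is absorbed into $\mathcal{T}_g$).

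For the extension class, I would invoke the fact cited in the excerpt that $H^2(\M_g;\Z)\cong\Z$ for $g\geq 3$ and that $\mathcal{T}_g$ is by definition the universal central extension realizing a generator. The plan is to show that the pullback of the $\mathcal{T}_g$-extension under $\mathcal{E}$ splits off a free abelian direct factor, so that $\As(\D)\cong\mathcal{T}_g\times\Z^{[g/2]+2}$; the crucial point is that the map $\Ker(\mathcal{E})\to\Z$ recording the $\mathcal{T}_g$-coordinate is surjective and its kernel is a free direct summand, which one checks by exhibiting an explicit splitting sending the $\Z^{[g/2]+2}$ factor to the separating-twist and auxiliary relator classes. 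For $g=2$ one loses the identification $H^2(\M_2;\Z)\cong\Z$ (the second cohomology is larger), so I would separately construct the central extension $\mathcal{T}_2\to\M_2$ and the complementary $\Z^2$ by the same relator analysis, noting that the lantern relation is vacuous (whence $\kappa_{\rm lantern}=1$) and the chain relation takes the modified form $(e_{c_1}e_{c_2}e_{c_3})^4e_{c_5}^{-2}$ recorded in Example \ref{exa1222}.

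The hard part will be the precise determination of $\Ker(\mathcal{E})$ and proving that exactly $[g/2]+2$ independent free generators survive: one must verify that no further relations among the lifted relators are forced by the group structure of $\As(\D)$, and that the separating-curve classes are genuinely independent rather than collapsing. This requires the full finite presentation of $\As(\D)$ established in \cite{Nos1}, and the bulk of the work — which I would cite rather than reproduce — is the relation-chasing that pins down the abelian group and shows the extension splits as a direct product. The signature-detection remark in Example \ref{exa1222} suggests that one of the central coordinates is pinned down by a signature cocycle, which is the mechanism guaranteeing that the $\mathcal{T}_g$-factor is not absorbed into the free part.
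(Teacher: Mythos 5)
This statement is not proved in the paper at all: Theorem \ref{Prop12} is imported verbatim from \cite{Nos1} and used as a black box, so there is no in-paper argument to compare your attempt against. Your proposal correctly identifies the two steps any proof must take --- (a) $\mathcal{E}:\As(\D)\to\M_g$ is surjective with central kernel, so $\As(\D)$ is a central extension of $\M_g$, and one must compute $\Ker(\mathcal{E})$ by lifting the defining relations of a Dehn-twist presentation of $\M_g$ (chain, lantern, hyperelliptic relations); (b) one must then show the extension splits as a \emph{direct product} of the universal central extension $\mathcal{T}_g$ with a free abelian complement. But you explicitly defer both the relator-chasing and the splitting to \cite{Nos1}, so in the end your ``proof'' is the same citation the paper itself gives, wrapped in a plausible strategy sketch.

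Two concrete soft spots if you wanted to upgrade the sketch to an argument. First, your bookkeeping for the exponent $[g/2]+2$ (``$[g/2]$ separating types, plus two relator classes, minus one absorbed'') does not obviously produce that number, and your intermediate claim that ``$\Ker(\mathcal{E})$ is precisely the abelianization of $\As(\D)$'' is wrong as written: the kernel is the central subgroup generated by the lifted relators, not the abelianization. A cleaner way to pin down the free rank is to note that $\As(\D)^{\mathrm{ab}}$ is free abelian on the set of conjugation-orbits of $\D$ (i.e.\ on the topological types of simple closed curves), while $\mathcal{T}_g$ is perfect for $g\geq 3$ since $\M_g$ is perfect and $\mathcal{T}_g$ is its universal central extension; hence the free factor must have rank equal to the number of orbits, and the claimed $[g/2]+2$ forces one to be careful about exactly which curves \eqref{defdg} admits. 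Second, and more seriously, the heart of the theorem is that the extension is a direct \emph{product} rather than merely a central extension whose kernel contains a free summand; you assert the existence of a splitting ``sending the $\Z^{[g/2]+2}$ factor to the separating-twist and auxiliary relator classes'' but give no reason why such a section is a homomorphism onto a complement of $\mathcal{T}_g$. That verification is exactly the content of \cite{Nos1} and cannot be waved through.
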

As a result, for $g \geq 2$, the category of $\mathcal{T}_g$-modules is essentially equivalent to that of $ \As(\D)$-modules.
Hence, we can apply the quantum representations to Lefschetz fibrations.

In this section, we will focus on the quantum field with $SU(2)$-gauge of level $2 $
\footnote{
In terminologies of \cite{BHMV}, the TQFT
is written as ``$V_8 (\Sigma, q)$ with $r=4$",
where by $q$ we mean the set of all spin structures on $\Sigma$ either of odd parity or of
even parity.
},
and classify some examples of Lefschetz fibrations by the associated bilinear forms.
As is known from Velinder formula \cite[Theorems 1.6 (iii) and 7.17]{BHMV}, the quantum field produces two representations $V_{\rm spin}^{\rm ev}$
and $V_{\rm spin}^{\rm odd}$
of dimension
$$ \mathrm{dim}( V_{\rm spin}^{\rm ev}) = (2^g+1)2^{g-1}, \ \ \ \ \ \ \ \ \ \mathrm{dim}( V_{\rm spin}^{\rm odd}) =(2^g-1)2^{g-1}.$$
These $V_{\rm spin}^{\rm ev}$ and $V_{\rm spin}^{\rm odd}$ are modules of $ \mathcal{T}_g $ and of $\mathcal{T}_{g,1}$, and
are referred to as a quantization of spin structures on $\Sigma_g$ of even parity and of odd parity, respectively.

We now roughly explain the reason that the space $V_{\rm spin}^{\dagger }$ is
reduced to a free module over the ring $\Z[\zeta] \subset \mathbb{C}$,
where $\zeta$ is the $16$-th root of unity: Namely $\zeta:={\rm exp}(\sqrt{-1}/8\pi)$.
To begin, when $g=1$, we can see that the representation is conjugate to the following matrix presentation
with coefficients in $\Z[\zeta]$:
$$ \rho (c_1) := {\small \left(
\begin{array}{ccc}
1 & 0&( \zeta +\zeta^8 )/(1+ \zeta) \\
0 &- \zeta^3 & 0\\
0 & 0& - 1
\end{array}
\right)
} , \ \ \ \ \ \ \rho (c_2) := {\small \left(
\begin{array}{ccc}
0 & \zeta^2 & 0 \\
-\zeta^6 &0 & 0\\
\zeta^4(1- \zeta^{7} )/(1- \zeta) & \zeta^3( 1 - \zeta^{7} )/(1- \zeta) & - \zeta^3
\end{array}
\right)}
.
$$
For larger $g>1$,
it follows from the line of the paper \cite{GMW}
that the matrix presentation of $\rho (c_i)$ with $g>1$ is formulated
as a direct sum of that of $\rho(c_1)$ with $g=1$ via ``some change of coordinate system" by the 6j-symbols.
Then, thanks to the integrality of the 6j-symbols,
the quantum representation is closed over the ring $ \Z[\zeta]$ as required.
However, the details are unnecessary to this paper. Actually,
since we work out only $g \leq 3$ in this section, it is enough only to describe explicitly two matrix presentations of $\rho (c_1), \ \rho (c_2)$ over
$ \Z[\zeta]$.

We mention some properties of the representation.
First, the ring $ \Z[\zeta]$ is a PID, since the class number of the fraction field $\Q(\zeta ) $ is well-known to be 1.
However, we remark that, for $g >2 $, the $\mathcal{T}_g$-invariant Hermitian inner product $W_{\rm spin}^{\rm ev}$ obtained from the TQFT
is not always unimodular: In fact, even if $g=3$, we can check ${\rm det} (W_{\rm spin}^{\rm ev})= 2^{84} \cdot 23 \cdot 7583 \cdot 14110027847$.

Hence, for simplicity, let us consider the reduction on the Artin ring $\mathbb{F}_2[{\tt y}]/ {\tt y}^2$ as in \eqref{opp}.
Then, we can see that the $\mathcal{T}_{g,*}$-modules $V_{\rm spin}^{\dagger }$ are reduced to modules of $ \mathcal{M}_g $. 
Furthermore, we can find two bilinear forms from reduced representations as follows:
\begin{lem}\label{vw}
Let $g=2$ or $=3$.
Consider the $\mathbb{F}_2[{\tt y}]/ {\tt y}^2$-space of $ \mathcal{M}_g $-invariant bilinear forms on
the reduced representation $V_{\rm spin}^{\dagger } $ in coefficients $\mathbb{F}_2[{\tt y}]/ {\tt y}^2$.
Then, the space is spanned by two symmetric
bilinear forms $ \omega_{\rm spin}^{\dagger } $ and $ \nu_{\rm spin}^{\dagger } $
such that $ \omega_{\rm spin}^{\dagger }$ is non-singular and $ \nu_{\rm spin}^{\dagger } $ is singular.
\end{lem}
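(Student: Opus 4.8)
The plan is to reinterpret the space of invariant forms as a $\Hom$-module over the local Artinian ring $R:=\mathbb{F}_2[{\tt y}]/({\tt y}^2)$ and to compute it. I would begin with a remark that explains why the forms come out \emph{symmetric}: under the reduction \eqref{opp} the complex conjugation $\zeta\mapsto\zeta^{-1}$ of $\Z[\zeta]$ becomes the identity on $R$, since $\overline{1+{\tt y}}=(1+{\tt y})^{-1}=1-{\tt y}=1+{\tt y}$ in characteristic $2$ with ${\tt y}^2=0$. Hence the induced involution on $R$ is trivial, so over $R$ a Hermitian form is literally a symmetric bilinear form, and the non-singular $\mathcal{T}_g$-invariant Hermitian form of the TQFT reduces to a symmetric $R$-bilinear form on $V_{\rm spin}^{\dagger}$.

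Next, writing $\mathbf{B}$ for the Gram matrix of a form, a $\mathcal{M}_g$-invariant bilinear form on $V_{\rm spin}^{\dagger}$ is precisely a solution of $\rho(h)^{\top}\mathbf{B}\,\rho(h)=\mathbf{B}$ as $h$ runs over the Dehn-twist generators of $\mathcal{M}_g$; equivalently, the space of invariant forms is $\Hom_{R[\mathcal{M}_g]}(V_{\rm spin}^{\dagger},(V_{\rm spin}^{\dagger})^{*})$, and the whole statement reduces to computing this $\Hom$-module over $R$. Since $R$ is local Artinian with residue field $\mathbb{F}_2$, every finitely generated module is $R^{a}\oplus\mathbb{F}_2^{b}$, and the assertion ``spanned by two forms, one non-singular and one singular'' is exactly the claim that this module is free of rank $1$, i.e. $a=1,\ b=0$: then a generator $\omega_{\rm spin}^{\dagger}$ together with $\nu_{\rm spin}^{\dagger}:={\tt y}\,\omega_{\rm spin}^{\dagger}$ is an $\mathbb{F}_2$-basis, while the alternative $\mathbb{F}_2^{b}$ would force every invariant form to be ${\tt y}$-torsion, hence $({\tt y})$-valued and singular, contradicting non-singularity of one of them.

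For the computation itself, for $g=2,3$ I would assemble the matrices $\rho(c_i)$ over $\Z[\zeta]$ from the $g=1$ presentations of $\rho(c_1),\rho(c_2)$ via the $6j$-symbol change of coordinates, reduce them by \eqref{opp}, and solve the linear system $\rho(h)^{\top}\mathbf{B}\,\rho(h)=\mathbf{B}$ over $R$ by computer. One then checks two things: that the solution space has $\mathbb{F}_2$-dimension $2$, and that it contains a form whose determinant reduces to a unit of $\mathbb{F}_2$ (i.e. $\det\overline{\mathbf{B}}=1$); this simultaneously exhibits the non-singular generator $\omega_{\rm spin}^{\dagger}$ and shows the module is not ${\tt y}$-torsion, hence free of rank $1$. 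Symmetry of the two basis forms is read off from the computed solutions (and is consistent with their origin as reductions of the Hermitian TQFT form, the involution on $R$ being trivial), and $\nu_{\rm spin}^{\dagger}={\tt y}\,\omega_{\rm spin}^{\dagger}$ is singular because its radical contains ${\tt y}\,V_{\rm spin}^{\dagger}\neq 0$, equivalently $\det\nu_{\rm spin}^{\dagger}={\tt y}^{n}\det\omega_{\rm spin}^{\dagger}=0$ for $n=\dim_R V_{\rm spin}^{\dagger}\ge 2$.

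The main obstacle is controlling the dimension and, above all, producing a non-singular form over $R$. The naive candidate — the reduction of the unitary TQFT form $W_{\rm spin}^{\dagger}$ — is in fact singular over $R$: its determinant carries a factor $2^{\ast}$ (for instance $\det(W_{\rm spin}^{\rm ev})=2^{84}\cdot 23\cdot 7583\cdot 14110027847$ when $g=3$) and $2\mapsto 0$ under \eqref{opp}, so $\det(\overline{W_{\rm spin}^{\dagger}})=0$. In the rank-$1$ module this reduced unitary form is exactly the \emph{singular} element $\nu_{\rm spin}^{\dagger}$ (up to a unit), and the non-singular $\omega_{\rm spin}^{\dagger}$ is its primitive companion, which must be extracted from the solution space rather than obtained by simply reducing $W_{\rm spin}^{\dagger}$. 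Pinning down the exact dimension amounts to computing $\Hom_{\mathbb{F}_2[\mathcal{M}_g]}(\overline{V_{\rm spin}^{\dagger}},(\overline{V_{\rm spin}^{\dagger}})^{*})$ in characteristic $2$, where $\overline{V_{\rm spin}^{\dagger}}$ need not be irreducible; this is the genuine work, and for $g=2,3$ it is carried out by the explicit linear-algebra computation above.
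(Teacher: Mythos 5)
The paper gives no proof of this lemma at all --- it says only that the statement ``can be done by a (long) direct calculation, thank to the matrix presentation of $V_{\rm spin}^{\dagger}$'' --- and your plan is precisely that calculation: assemble the reduced matrices $\rho(c_i)$ over $\mathbb{F}_2[{\tt y}]/({\tt y}^2)$ and solve the invariance system $\rho(h)^{\top}\mathbf{B}\,\rho(h)=\mathbf{B}$ by computer. Two of your supporting observations are correct and genuinely clarifying: under the reduction \eqref{opp} the involution $\zeta\mapsto\zeta^{-1}$ becomes the identity (since $(1+{\tt y})^{-1}=1+{\tt y}$ in characteristic $2$ with ${\tt y}^2=0$), so Hermitian and symmetric coincide; and the reduction of the unitary TQFT form is singular because $2\mapsto 0$ kills the factor $2^{\ast}$ in its determinant, so the non-singular generator $\omega_{\rm spin}^{\dagger}$ cannot be obtained by naively reducing $W_{\rm spin}^{\dagger}$.

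The one step I would not let stand is your claim that the lemma is \emph{exactly} the assertion that the $R$-module of invariant forms is free of rank $1$, so that $\nu_{\rm spin}^{\dagger}={\tt y}\,\omega_{\rm spin}^{\dagger}$ and the solution space has $\mathbb{F}_2$-dimension $2$. The lemma speaks of the ``$\mathbb{F}_2[{\tt y}]/{\tt y}^2$-space'' being ``spanned by two'' forms, which more plausibly means two independent $R$-module generators (e.g.\ the module could be $R\omega\oplus\mathbb{F}_2\nu$ or $R\omega\oplus R\nu$); describing a cyclic module as ``spanned by $\omega$ and ${\tt y}\omega$'' would be an odd formulation. Moreover, the paper subsequently reports that ``in computer experience, the image of $\mathcal{Q}_{\nu_{\rm spin}^{\dagger}}$ lies in the ideal $({\tt y})$'' as an empirical finding and treats $\mathcal{Q}_{\nu}$ as carrying information beyond $\mathcal{Q}_{\omega}$; if $\nu={\tt y}\omega$ both points would be tautologies. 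So your proposed verification target (``check the solution space has $\mathbb{F}_2$-dimension $2$'') is likely the wrong one, and as written your argument would declare failure if the computation returns dimension $3$ or $4$. The repair is small: drop the rank-one reduction, let the computation determine the module structure, take for $\omega_{\rm spin}^{\dagger}$ any solution whose determinant is a unit of $R$, and take for $\nu_{\rm spin}^{\dagger}$ an $R$-generator of the singular forms independent of $\omega_{\rm spin}^{\dagger}$; the rest of your write-up, including the ${\tt y}$-torsion-implies-singular argument, goes through unchanged.
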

\noindent
Though we omit describing the proof, it can be done by a (long) direct calculation, thank to the matrix presentation of $ V_{\rm spin}^{\dagger } $.

However, Theorem \ref{aa333c} indicates that
the bilinear form $ \mathcal{Q}$ from the singular form $ \nu_{\rm spin}^{\dagger }$ enjoys
more interesting properties than that of $\omega_{\rm spin}^{\dagger } $.
In fact, in computer experience, the image of $ \mathcal{Q}_{\nu_{\rm spin}^{\dagger }}$
lies in the ideal $ ({\tt y}) \subset \mathbb{F}_2[{\tt y}]/ {\tt y}^2$;
Hence, we can easily determine the class of $ \mathcal{Q}_{\nu_{\rm spin}^{\dagger }} $ as mentioned in \S \ref{ss2112}.
In doing so, here are the fifth and sixth columns in Table \ref{G3} concerning the classes of Lefschetz fibrations.
As a result, we obtain the following:
\begin{prop}\label{vbbw} Fix $s,t \in \Z$.
The triples of the fibrations $(\xi_2 \sharp_{c_0 }^{2s} \xi_1, \ \xi_2 \sharp_{c_0 }^{2t-1} \xi_1,\ \xi_3 )$ are non-isomorphic.
\end{prop}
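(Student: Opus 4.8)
The plan is to distinguish the three fibrations by computing the invariant $\mathcal{Q}_{\nu_{\mathrm{spin}}^{\dagger}}$ on each, exactly as tabulated in Table \ref{G3}, and showing that the resulting isomorphism classes are pairwise distinct for every choice of $s,t \in \Z$. First I would recall that each triple member is a twist fiber sum $\xi_2 \sharp_{c_0}^{k} \xi_1$ built from the tuples $\xi_2$ and a conjugated copy of $\xi_1$ (with $k=2s$, $k=2t-1$, or the comparison fibration $\xi_3$), so by Theorem \ref{aa4221c} the bilinear form $\mathcal{Q}_{\nu_{\mathrm{spin}}^{\dagger}}$ is a well-defined Hurwitz invariant of each. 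Since $\nu_{\mathrm{spin}}^{\dagger}$ is the singular $\M_g$-invariant form of Lemma \ref{vw}, and since in practice the image of $\mathcal{Q}_{\nu_{\mathrm{spin}}^{\dagger}}$ lands in the ideal $({\tt y}) \subset \mathbb{F}_2[{\tt y}]/({\tt y}^2)$, the discussion at the end of \S\ref{ss2112} says the isomorphism class of the form is detected by its rank together with its Arf invariant (the parity of the diagonal entries), via the Witt group $W(\mathbb{F}_2)$.

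The key computational step is therefore to read off, for each of the three families, the rank and Arf-type data of $\mathcal{Q}_{\nu_{\mathrm{spin}}^{\dagger}}$ as a function of the twisting power $k$. I would organize this by first establishing how the form $\mathcal{Q}_{\nu_{\mathrm{spin}}^{\dagger}}$ behaves under the twist fiber sum operation $\sharp_{c_0}$: the underlying module $\Ker(\Gamma_{\mathbf{z}})$ and the submodule $M_{\mathbf{z}}$ of \S\ref{ss21192} grow in a controlled way with the length $m$ of the tuple, and the reduction in Proposition \ref{pprop1} together with the decomposition into $M_{\mathbf{z}}$ and its complement lets one track the rank of the nondegenerate part. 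The parity of $k$ enters precisely through the conjugating element $c_0$ and the quantum monodromy it induces on $V_{\mathrm{spin}}^{\dagger}$; one expects $c_0^{2s}$ and $c_0^{2t-1}$ to differ in the resulting form because the action of $c_0$ on the reduced representation has order-two features modulo ${\tt y}$, so even and odd powers produce genuinely different invariants. I would verify directly, using the matrix presentations of $\rho(c_1),\rho(c_2)$ over $\Z[\zeta]$ reduced via \eqref{opp}, that the even family, the odd family, and $\xi_3$ fall into three distinct classes; the sample entries for $\xi_2\sharp_{\mathrm{id}}\xi_1$ versus $\xi_2\sharp_{d}\xi_1$ in Table \ref{G3} already exhibit exactly this even/odd dichotomy and serve as the base cases $s=t=1$.

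The main obstacle I anticipate is proving the distinctness \emph{uniformly} in $s$ and $t$, rather than for the single computed instances in the table. The honest difficulty is that a finite computer calculation only certifies a few values of the twisting power, so I would need a stabilization argument: I would show that adding one more pair of $\sharp_{c_0}^2$ twists changes $\mathcal{Q}_{\nu_{\mathrm{spin}}^{\dagger}}$ by a predictable summand (plausibly a hyperbolic or trivial block that shifts only the rank, leaving the Arf invariant fixed within each parity class), so that the even family $k=2s$ and the odd family $k=2t-1$ each carry an invariant that is locked to a fixed value of the parity-type obstruction while the rank is pinned down by Theorem \ref{akdsj}-style rank formulas and the type $(m_{\mathrm{ns}}, m-m_{\mathrm{ns}})$ of the sum. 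Concretely, I would argue that the three classes are separated by the pair (rank mod something, Arf invariant), and that no choice of $s,t$ can make an even-twist member coincide with an odd-twist member or with $\xi_3$, because these live in different residues of the detecting invariant. Establishing this monotonic or periodic behavior of the form under repeated twisting — and ruling out accidental coincidences of rank-and-Arf data across the families — is where the real work lies; once it is in place, the pairwise non-isomorphism of the three fibrations follows immediately from the fact that $\mathcal{Q}_{\nu_{\mathrm{spin}}^{\dagger}}$ is a Hurwitz invariant.
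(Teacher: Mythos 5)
You have the right detecting invariant and the right base cases, but the one step you flag as ``where the real work lies'' --- uniformity in $s$ and $t$ --- is exactly the step you do not supply, and the route you sketch for it would not work. You appear to read $\xi_2\sharp_{c_0}^{k}\xi_1$ as $k$ iterated fiber sums, so that increasing $k$ lengthens the monodromy tuple and you must track how $\mathcal{Q}_{\nu_{\rm spin}^{\dagger}}$ grows (``adding one more pair of twists changes the form by a predictable summand''). But $\sharp_{c_0}^{k}$ is the \emph{single} twist fiber sum along the element $h=c_0^{k}$: the tuple is always $(z_1,\dots,z_m,\,h^{-1}z_1'h,\dots,h^{-1}z_{m'}'h)$ of fixed length $m+m'$, and only the conjugating element varies with $k$. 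There is no stabilization to prove and no rank growth to control.

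The argument the paper uses is a one-line reduction that you half-notice (``the action of $c_0$ ... has order-two features'') but never deploy: in the $SU(2)$ level-$2$ representation, $\bigl(\rho_{SU(2),2}(e_{\alpha})\bigr)^2=\mathrm{id}$ for \emph{every} Dehn twist $\alpha$. Since $e_{h^{-1}z'h}=e_{h}^{-1}e_{z'}e_{h}$ in $\As(\D)$, the endomorphisms $e_{z_i}$ entering $\Gamma_{\mathbf z}$ and $\mathcal{Q}_{\nu_{\rm spin}^{\dagger}}$ depend only on $k\bmod 2$; hence the bilinear form of $\xi_2\sharp_{c_0}^{2s}\xi_1$ is \emph{literally the same} for all $s$ (equal to that of $\xi_2\sharp_{\rm id}\xi_1$), and likewise for all odd exponents. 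The proposition then follows from the three distinct entries in Table \ref{G3} at $s=t=0$. Your proposal, as written, leaves the reduction to the base case as an unproven conjecture about the asymptotic behavior of the form, so it does not constitute a proof; replacing the stabilization discussion with the order-two observation closes the gap.
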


\begin{proof} For any Dehn twist $\tau_{\alpha}$, the twice $\bigl( \rho_{SU(2),2} ( e_{\alpha})\bigr)^2 $ is identity.
Hence, the bilinear forms of $\xi_2 \sharp_{c_0 }^{2s} \xi_1$ and $ \xi_2 \sharp_{c_0 }^{2t-1} \xi_1 $
are independent of $s,t \in \Z$. 
According to Table \ref{G3}, the above triple with $s=t=0$
are non-isomorphic. Hence, we have the conclusion as stated.
\end{proof}
However, 
as seen in Table \ref{G3}, the invariants $\mathcal{Q}_{\omega_{\rm spin}^{\dagger}}$ of lower level seem not so strong, in contrast to the Jones knot-polynomial and
with a recent result \cite{EK}.
For example, in the paper \cite{EK},
the two pairs $(\xi_1 \sharp \xi_1,\xi_2)$ and $ (\xi_P,\xi_Q)$ are shown to be non-isomorphic Lefschetz fibrations,
while the associated bilinear forms can not detect them.

To conclude, it is a problem for the future to compute and analyse the bilinear forms $\mathcal{Q} $ arising from the quantum representations with respect to other gauge of other level.

\section{Proofs of theorems in Sections \ref{ss21192}-\ref{ssg2}}\label{ss2092}
We will make the proofs of the statements in Sections \ref{ss21192}-\ref{ssg2}
that remain to be proved.
Since the proof is based on Proposition \ref{aa1133c}, the reader should reread it.

Furthermore, we prepare the universal coefficient theorem over Dedekind domains:
\begin{lem}\label{a2a3}
Let $A$ be a Dedekind domain, and $M$ be a finitely generated $A$-module.
Then there is an exact sequence
$$ 0\lra \Ext^1_A( H_{n-1}(Y;M),A ) \lra H^{n}( Y ;M ) \stackrel{ \mathcal{I}}{ \lra} \Hom ( H_n (Y ;M ), \ A ) \lra 0 . $$
Here $\mathcal{I} $ is the canonical map on the (co)chain group of $Y$.
Furthermore, $\mathcal{I} $ induces an isomorphism $ H^{n}( Y ;M )/\Tor \cong \Hom ( H_n (Y ;M ), \ A ).$
\end{lem}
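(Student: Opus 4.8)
The statement is the universal coefficient theorem, carried over from principal ideal domains to Dedekind domains, so the plan is to isolate its algebraic core and verify that the single place where the PID hypothesis is classically invoked survives over $A$. Let $C_*(Y;M)$ denote the cellular chain complex of $Y$ with local coefficients $M$, a bounded complex of finitely generated $A$-modules since $Y$ is homotopy equivalent to a finite complex; in the cases of interest (Theorem \ref{aa333c} and \S\ref{ss2112}) the module $M$ is free, so $C_*(Y;M)$ is already a complex of free $A$-modules, and $H^*(Y;M)$ is computed by its $A$-dual $\Hom_A(C_*(Y;M),A)$ with $\mathcal{I}$ the evaluation (Kronecker) map. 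It therefore suffices to prove the algebraic universal coefficient theorem: for a bounded complex $C_*$ of finitely generated projective $A$-modules, there is an exact sequence $0 \to \Ext^1_A(H_{n-1}(C_*),A) \to H^n(\Hom_A(C_*,A)) \to \Hom_A(H_n(C_*),A) \to 0$.

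For the algebraic step I would carry out the usual dévissage. Denote by $Z_n \subset C_n$ and $B_n \subset Z_n$ the cycles and boundaries; we have the exact sequences $0 \to Z_n \to C_n \to B_{n-1} \to 0$ and $0 \to B_n \to Z_n \to H_n(C_*) \to 0$. Over a Dedekind domain the semi-hereditary property recalled in \S\ref{ss21192} shows that $B_n$ and $Z_n$, being submodules of the projective module $C_n$, are themselves projective; hence the first sequence splits and $\Ext^i_A(Z_n,A)=\Ext^i_A(B_n,A)=0$ for $i\ge1$. Applying $\Hom_A(-,A)$ and its derived functors to the second sequence and splicing the results along the cochain complex $\Hom_A(C_*,A)$ then produces the asserted short exact sequence, exactly as in the classical proof; the only Dedekind-specific inputs are this projectivity of cycles and boundaries and the vanishing $\Ext^i_A(-,A)=0$ for $i\ge2$, which is the statement that $A$ has global dimension one.

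The one point that genuinely requires attention — and the natural obstacle — is that over a Dedekind domain projective modules need not be free, so the splittings above cannot be produced by choosing bases as over a PID but must be extracted from semi-heredity; moreover, for a torsion-bearing coefficient module $M$ the chains $C_n$ are no longer projective, and then $H^*(Y;M)$ must be computed from a bounded complex of finitely generated projective $A$-modules quasi-isomorphic to $C_*(Y;M)$ (a perfect model, available precisely because $A$ has global dimension one) rather than from the naive $A$-dual of $C_*(Y;M)$ itself. Granting the exact sequence, the final assertion is immediate: $\Hom_A(H_n(C_*),A)$ is torsion-free because $A$ is a domain, whereas $\Ext^1_A(H_{n-1}(C_*),A)$ is a torsion module supported on $\Tor(H_{n-1}(C_*))$ by the structure theorem (i) of \S\ref{ss21192}; hence the torsion submodule of $H^n(Y;M)$ maps to zero under $\mathcal{I}$ and coincides with $\Ext^1_A(H_{n-1}(C_*),A)$, so passing to the quotient by torsion yields the isomorphism $H^n(Y;M)/\Tor \cong \Hom_A(H_n(Y;M),A)$ induced by $\mathcal{I}$.
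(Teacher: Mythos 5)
Your proof is correct, but it takes the elementary route where the paper takes the spectral-sequence one. The paper's argument is two lines: it invokes the universal coefficient spectral sequence of \cite[\S 12.1]{McC} and observes that it collapses at the $E_2$-term because $\Ext^n_A(-,-)=0$ for $n\geq 2$ over a ring of Krull (global) dimension one, which immediately yields the two-column short exact sequence; you instead reconstruct that sequence by hand via the classical d\'evissage $0\to Z_n\to C_n\to B_{n-1}\to 0$ and $0\to B_n\to Z_n\to H_n\to 0$, replacing the PID-style choice of bases by the semi-hereditary property (ii) of \S \ref{ss21192} to split off cycles and boundaries. The two arguments rest on the same homological input --- that a Dedekind domain has global dimension $\leq 1$ --- so neither is more general, but yours is self-contained and, usefully, makes explicit a point the paper passes over in silence: when the coefficient module $M$ has torsion the chain groups $C_*(Y;M)$ are no longer projective over $A$, and one must pass to a projective (perfect) model before dualizing. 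For the final assertion the two proofs essentially coincide: the paper notes that $\Hom(H_n(Y;M),A)$ is projective and $\Ext^1_A(H_{n-1}(Y;M),A)$ is torsion by the structure theorem (i), while you phrase the same dichotomy as torsion-free versus torsion and conclude that the $\Ext^1$-term is exactly $\Tor(H^n(Y;M))$, so that $\mathcal{I}$ induces the stated isomorphism $H^n(Y;M)/\Tor\cong\Hom(H_n(Y;M),A)$.
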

\begin{proof}
Note that $\Ext^n$ with $n \geq 2 $ are zero since $A$ is of Krull dimension 1.
Thus, the universal coefficient spectral sequence (see \cite[\S 12.1]{McC}) collapses at $E_2$-term;
hence we have the exact sequence.
For the latter part, by the reason of (i) explained in \S \ref{ss21192},
$ \Hom ( H_n (Y ;M ), \ A ) $ is projective and $\Ext^1_A( H_{n-1}(Y))$ is torsion.
Thereby, we immediately obtain the isomorphism.
\end{proof}

\begin{proof}[Proof of Theorem \ref{aa333c}]
(I) 
We start by considering the long exact sequence associated with the pair $ ( Y,\partial Y ) $:
\begin{equation}\label{ajsk} 0 \lra H^0( Y ; M ) \lra H^0( \partial Y ; M ) \stackrel{\delta^*}{\lra} H^1( Y,\partial Y ; M ) \stackrel{p^*}{\lra} H^1( Y; M ) \stackrel{\iota^*}{\lra} H^1( \partial Y ; M ) .
\end{equation}
As is seen in \cite[Proposition 4.7]{Nos3}, the cokernel of $\delta^*$ can be identified with the quotient $ \underline{M}_{\mathbf{z} } $ in \eqref{mzmz}.
In particular, the module $ M_{z}$ is, by construction, regarded as $ \Coker (\delta^* )/\Tor$.

Based on the identifications,
we will reformulate the adjoint map of bilinear form $\mathcal{Q}_{\psi}$, and will show its injectivity.
Define a homomorphism
$$ \Upsilon: H^1( Y, \partial Y ; M ) \lra \Hom ( H^1( Y ; M ) , \ (M \otimes M)_{\pi_1(Y)})$$
by setting $\Upsilon (u) := \langle u \smile \bullet , [Y, \partial Y]\rangle \in (M \otimes M)_{\pi_1(Y)} $.
Hence, composing $ \Upsilon $ with the pairing of the bilinear form $\psi$ yields a homomorphism
\begin{equation}\label{aa33c} \Upsilon_{\psi}: H^1( Y, \partial Y ; M ) \lra \Hom ( H^1( Y ; M ) , \ A) .
\end{equation} 
Here, we should notice that the map $ \langle \bullet , [(Y ,\partial Y )] \rangle $
is an isomorphism by Poincar\'{e} duality on the first (co)homology in local coefficients;
thus, this $ \Upsilon_{\psi}$ is injective and the cokernel is torsion,
thanks to the Poincar\'{e} duality and the non-degeneracy of $\psi$.
Here, recall an elementary equality on the cup product:
$$ \langle u \smile p^*(v) , [Y, \partial Y]\rangle =0 \in (M \otimes M)_{\pi_1(Y)} , \ \ \ \ \ \
{\rm for \ any \ } u \in \Im (\delta^*), {\rm \ \ } v \in H^1( Y ; M ) .$$
Hence, by considering the restriction on $\Im (p^*)= \Coker (\delta^* ) $, 
the map $ \Upsilon_{\psi}$ in \eqref{aa33c} induces
\begin{equation}\label{aac3321}\Upsilon_{\psi}': \Coker (\delta^* )/\Tor \lra \Hom ( \Im (p^*) , A).
\end{equation}
By constructions and Proposition \ref{aa1133c}, this $\Upsilon_{\psi}'$ is the adjoint map of $\mathcal{Q}_{\psi}$.
Letting $K$ be the fractional field of $A$, since
$\Upsilon_{\psi} \otimes \mathrm{id}_K$ is injective as mentioned in \eqref{aa33c},
so is $ \Upsilon_{\psi}' \otimes \mathrm{id}_K $.
Hence, we have the injectivity of $ \Upsilon_{\psi}'$, which completes the proof for the non-degeneracy of $\mathcal{Q}_{\psi}$.




(II) Assuming the unimodularity of $\psi$, we will observe that of $\mathcal{Q}_{\psi}$.
To this end, consider the following commutative diagram:
$${\normalsize
\xymatrix{
H^1( Y, \partial Y , M) \ar[r]\ar[d]_{\Upsilon_{\psi} } & \Im (p^*)/\Tor \ar[r] \ar[d]^{\Upsilon_{\psi}' } & \ \ 0\ar[d] \ \ & \\
\Hom( H^1( Y ; M),A ) \ar[r] & \Hom( \Im (p^*),A) \ar[r] & \Ext^1 ( \Im (\iota^*), A) \ar[r]^{\iota^*} & \Ext^1 ( H^1( Y ;M ), A) ,
}}
$$
where the bottom sequence is the $\Ext$-functor obtained from \eqref{ajsk}, and the
horizontal sequences are exact.
Notice that, by Poincar\'{e} duality, the map $\Upsilon_{\psi} $ mentioned in \eqref{aa33c} is an isomorphism.
Therefore the Snake Lemma implies that the cokernel of $\Upsilon_{\psi}'$ is identified with
the kernel of the bottom right map $ \iota^* $.

We will analyse the kernel $\Ker(\iota^* )$,
because the unimodularity of $\mathcal{Q}_{\psi}$ is equivalent to
the injectivity of the map $ \iota^*$.
Here, note that, by 1-dimensional duality of $Y$,
we can regard the image $\Im(\iota^*)$ as
the kernel of the pushfoward $\iota_{*}: H_0(\partial Y;M) \ra H_0( Y;M).$

Therefore, we now analyze this $\iota_{*} $ by computing $H_0(\partial Y;M) $ and $H_0( Y;M) $.
A standard resolution of the free group $\pi_1(Y)$ after tensoring with $M$
is written as
$$ 0 \lra M \otimes \Z [\pi_1(Y)]^{m-1} \xrightarrow{ \ \ \partial_1 \ \ } M \otimes \Z [\pi_1(Y)] \lra 0 . $$
Here, the map $\partial_1$ sends $ a \otimes (g_1 , \dots, g_{m-1} )$ to $\sum_{i=1}^{m-1}( a g_i) \otimes (1 -g_{i})$.
Thereby, the 0-th and first cohomologies are given by
$$ H^0( Y;M) \cong M /\mathcal{A}_{\mathbf{z}} , \ \ \ \ H^1( Y ;M )\cong M^{m-1} /\mathcal{B}_{\mathbf{z}} ,$$
where the submodules $\mathcal{A}_{\mathbf{z}}$ and $\mathcal{B}_{\mathbf{z}}$
were defined in Theorem \ref{aa333c}.
Here, notice by definitions that the 0-th homology of the boundary forms
$H_0(\partial Y;M) \cong \oplus_{i=1}^m \Coker(1 -e_{z_i})$.
Then, the map $\iota_*: H_0(\partial Y;M) \ra H_0( Y;M) $ is identified
with the map $\lambda$
from the definition \eqref{suzuki2}.
Hence. in the Ext$^1$ terms, we have an isomorphism between $ \Ext^1 ( \Im (\iota^*), A) $ and the cokernel of $\chi^*$ mentioned in Theorem \ref{aa333c}.
In a similar fashion, 
we can identify the kernel of the map $ \Ext^1 ( \Im (\iota^*), A) \ra \Ext^1 ( H^1( Y ;M ), A) $
with the kernel of $\chi^* $.
In particular, the cokernel of $\Upsilon_{\psi}'$ is zero if and only if
the kernel of $ \chi^*$ is zero.
Hence $\Upsilon' $ is unimodular if and only if
$ \chi^*$ is injective.
This completes the proof.
\end{proof}
Changing the subject, we will show Theorem \ref{akdsj}.
\begin{proof}[Proof of Theorem \ref{akdsj}]
We start by setting up notation. Let $M= H^1(\Sigma_g;\Q)$.
Let $E_{\mathbf{z}}$ be the total space of $\xi$ as a closed 4-manifold
with a $C^{\infty}$-map $ \pi : E_{\mathbf{z}} \ra S^2$,
and let $E_{\mathbf{z}}^{\rm reg}$ be the complimentary space $ p^{-1}( Y ) \subset E_{\mathbf{z}}$.
Notice that, from the definition of Lefschetz fibrations, the restriction of $\pi : (E^{\rm reg}_{\mathbf{z}}, \partial E_{\mathbf{z}}^{\rm reg}) \ra (Y, \partial Y)$ is a relative $\Sigma_g$-fibration.

Next, we will compute the rank of $\Ker (\Gamma_{\mathbf{z}}) $.
Recalling the isomorphism $\mathcal{H}^1(Y, \partial Y;M ) \cong \Ker (\Gamma_{\mathbf{z}}) \oplus M$ in Proposition \ref{aa1133c},
we will focus on the Leary-Serre spectral sequence of $\pi$, which is written as
$$ E_2^{p,q} \cong \mathcal{H}^p( Y, \partial Y ; \ H^q(\Sigma_g ;\Q ) ) \ \Longrightarrow \ E_{\infty } \cong H^*(E^{\rm reg}_{\mathbf{z}}, \partial E_{\mathbf{z}}^{\rm reg} ;\Q). $$
Here we remark that if $q=1$, the coefficients in the $E_2$-term arise from the symplectic representation on $M= H^1(\Sigma_g;\Q)$.
Then, one can easily notices $ E_2^{0,*} \cong 0$, because
$$ \mathcal{H}^0(Y, \partial Y; H^q(\Sigma_g ;\Q ) ) \cong \mathcal{H}_2(Y; H^q(\Sigma_g ;\Q ) )\cong 0$$ by duality.
Therefore, the spectral sequence collapses at the $E_2$-page, leading to isomorphisms
\begin{equation}\label{a33sk1} H^2( E^{\rm reg}_{\mathbf{z}}, \partial E_{\mathbf{z}}^{\rm reg} ;\Q) \cong \mathcal{H}^1(Y, \partial Y;M ) \oplus \Q ,\end{equation}
\[ H^3( E^{\rm reg}_{\mathbf{z}}, \partial E_{\mathbf{z}}^{\rm reg} ;\Q) \cong \Q^{m-1} \oplus (M _{\pi_1(Y)}), \]
\[ H^4( E^{\rm reg}_{\mathbf{z}}, \partial E_{\mathbf{z}}^{\rm reg} ;\Q) \cong \mathcal{H}^2(Y, \partial Y; H^2(\Sigma_g ;\Q ) )\cong \mathcal{H}_0(Y ; H^2(\Sigma_g ;\Q ) ) \cong \Q . \]
Thus, for the attempt to compute $ \mathcal{H}^1(Y, \partial Y;M )$, we shall study the rational cohomology $ H^*( E^{\rm reg}_{\mathbf{z}}, \partial E_{\mathbf{z}}^{\rm reg} ;\Q)$.
Noticing the excision axiom
\begin{equation}\label{ajddsk1}H^*(E^{\rm reg}_{\mathbf{z}}, \partial E_{\mathbf{z}}^{\rm reg} ;\Q ) \cong H^*(E_{\mathbf{z}}, \ \pi^{-1}(D_1 \sqcup \cdots \sqcup D_m ) ;\Q ),
\end{equation}
and, denoting the union $ \bigsqcup_{i=1}^{ m} \pi^{-1}( D_i ) $ by $ W $, we now consider the long exact sequence
\begin{equation}\label{ajddsk1}
0\lra H^1( E_{\mathbf{z}}^{\rm reg} )\lra H^{1}(W ) \lra H^{2} (E_{\mathbf{z}}^{\rm reg}, \ W )\ra \cdots \ra H^{3} (E_{\mathbf{z}}^{\rm reg}) \lra H^{3} ( W )\lra 0 ,
\end{equation}
where the coefficients are trivial and rational.
It is worth noticing the isomorphisms
$$H^{1}(W ) \cong \Q^{2gm - m_{\rm ns} } , \ \ \ \ \ H^{2}(W ) \cong \Q^{m - m_{\rm ns} } $$
which immediately follow from Lemma \ref{a2a5dd523} below. 
Therefore, with notation $H^i(E_{\mathbf{z}} ) \cong \Q^{b_i}$, the exact sequence \eqref{ajddsk1} reduces to
$$ 0\ra \Q^{b_1} \ra \Q^{2gm - m_{\rm ns} } \ra H^{2} (E^{\rm reg}_{\mathbf{z}}, \partial E_{\mathbf{z}}^{\rm reg}) \ra \Q^{b_2} \ra \Q^{m- m_{\rm ns}} \ra \Q^{m-1} \oplus M _{\pi_1(Y)} \ra \Q^{b_1} \ra 0. $$
Recalling the Euler characteristic $\chi (E_{\mathbf{z}}) =m-4g+ 4$, we readily have $b_2= m-4g+2+2b_1$.
Moreover, noting $ M _{\pi_1(Y)} \cong \Q^{b_1} $ by Lemma \ref{a2a5dd523} again, the exact sequence therefore leads to 
\begin{equation}\label{a33sk221} \mathrm{dim}(H^{2} (E^{\rm reg}_{\mathbf{z}}, \partial E_{\mathbf{z}}^{\rm reg} ) ) = 2gm +b_1 +1. \end{equation}
Hence, by combining the assumption $\mathcal{H}^1(Y, \partial Y;M ) \cong \Ker (\Gamma_{\mathbf{z}}) \oplus M$ with \eqref{a33sk1},
the rank of $ \Ker (\Gamma_{\mathbf{z}}) $ turns out to be $2g(m-1)+b_1$ as desired.

Next, we will compute the rank of the quotient free $\Z$-module $ \underline{M}_{\mathbf{z}}$ defined in \eqref{mzmz}.
According to the proof of Theorem \ref{aa333c},
this module is isomorphic to the cokernel $ \Coker (\delta_* )$.
Notice $ \mathrm{rk}(\mathcal{H}^{1} (Y, \partial Y; M )) = 2gm +b_1$ from \eqref{a33sk221}.
Furthermore, noticing $ H^{0} ( \partial Y;M ) \cong \oplus_i^{m} \Ker( 1 - e_{z_i}) $ by definition,
the rank is $2gm - m_{\rm ns} $, since
the matrix presentation of the symplectic representation implies that, if $z \in \DD$, the rank of $1 - e_{z}$ is $1$;
otherwise the rank of $1 - e_{z}$ is zero.
Finally, noting $ {\rm rk}\mathcal{H}^{0} ( Y;M )={\rm rk}H_1( \Sigma_g ;\Z )_{\pi_1(Y)} =b_1$ from Lemma \ref{a2a5dd523},
the sequence \eqref{ajsk} implies the desired equality $ \mathrm{rk} M_{\mathbf{z}} = \mathrm{rk} \underline{M}_{\mathbf{z}} =m_{\rm ns} - 2g +2 b_1$.


Finally, we will analyze the signature.
Proposition \ref{procol31} says that $\mathcal{Q}_{\psi}$ is symmetric,
and that the signature of $\mathcal{Q}_{\psi}$ is
equal to that of the cup product form on $\mathcal{H}^1(Y, \partial Y;M ) $.
Furthermore, regarding the symplectic form $\omega$ as the ring structure on $H^1(\Sigma_g)$,
one can verify that the (relative) Leary-Serre spectral sequence of $\pi $ preserves the cup product (see \cite[\S 5.3]{McC} or \cite{Meyer}).
Hence the (relative) signature on $\mathcal{H}^1(Y, \partial Y;M ) $ is equal to
that of $E^{\rm reg}_{\mathbf{z}}$.
Moreover $\sigma (E^{\rm reg}_{\mathbf{z}}) = \sigma(E_{\mathbf{z}})+ m-m_{\rm ns} $ is known as a result of Novikov-Wall additivity formula (see
\cite{Ozb,EN}).
In summary, the signature of $\mathcal{Q}_{\psi}$ is
equal to $ \sigma(E_{\mathbf{z}})+ m- m_{\rm ns} $ as claimed.
\end{proof}

\begin{lem}\label{a2a5dd523}
Let $D_i \subset S^2$ be the $i$-th open disk explained above.
Then
$$ H_1 (\pi^{-1}D_i ;\Z) \cong \left\{
\begin{array}{ll}
\Z^{2g-1} , &\quad {\rm if \ } \gamma_i \in \DD, \\
\Z^{2g} , &\quad {\rm otherwise. }
\end{array}
\right. \ \ \ \ \ \ H_2 (\pi^{-1}D_i ;\Z) \cong \left\{
\begin{array}{ll}
\Z , &\quad {\rm if \ } \gamma_i \in \DD, \\
\Z^{2} , &\quad {\rm otherwise. }
\end{array}
\right.
$$
Further, $ H_n (p^{-1}D_i ;\Z)$ vanishes for $n \geq 3$.

In addition, the coinvariant $ H_1(\Sigma_g;\Z)_{\pi_1 (Y)}$ is isomorphic to the integral first homology of $E_{\mathbf{z}}.$
\end{lem}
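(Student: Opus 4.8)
The plan is to reduce everything to the homotopy type of the singular fiber sitting over $D_i$. First I would recall the local complex-analytic normal form $\pi(z_1,z_2)=z_1z_2$: over $D_i$ there is a single critical value whose fiber is the nodal degeneration of $\Sigma_g$ obtained by collapsing the vanishing cycle $\gamma_i$ to a point. By the standard handle description of a Lefschetz fibration over a disk (see \cite{Kas,OS}), $\pi^{-1}(D_i)$ is built from the trivial piece $\Sigma_g\times D^2$ by attaching a single $4$-dimensional $2$-handle along $\gamma_i\times\{\mathrm{pt}\}\subset\Sigma_g\times\partial D^2$. Up to homotopy this attachment only records the attaching circle, so $\pi^{-1}(D_i)$ is homotopy equivalent to $X:=\Sigma_g\cup_{\gamma_i}e^2$, the smooth fiber with a single $2$-cell glued along $\gamma_i$. (Equivalently $\pi^{-1}(D_i)$ deformation retracts onto the singular fiber itself, which is $\Sigma_{g-1}\vee S^1$ when $\gamma_i$ is non-separating and $\Sigma_h\vee\Sigma_{g-h}$ when $\gamma_i$ separates $\Sigma_g$ into pieces of genus $h$ and $g-h$.)

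Second, I would feed $X$ into the long exact sequence of the pair $(X,\Sigma_g)$. Since $X/\Sigma_g\simeq S^2$, the only nonzero relative group is $H_2(X,\Sigma_g)\cong\Z$ and $H_n(X)=0$ for $n\geq 3$, while the interesting part reads
$$ 0\lra H_2(\Sigma_g)\lra H_2(X)\lra \Z \stackrel{\partial}{\lra} H_1(\Sigma_g)\lra H_1(X)\lra 0, $$
with $\partial(1)=[\gamma_i]\in H_1(\Sigma_g)\cong\Z^{2g}$. The two cases of the lemma are now exactly the two possibilities for the class $[\gamma_i]$: when $\gamma_i\in\DD$ the class $[\gamma_i]$ is primitive, so $\partial$ is injective onto a direct summand, giving $H_2(X)\cong\Z$ and $H_1(X)\cong\Z^{2g}/\langle[\gamma_i]\rangle\cong\Z^{2g-1}$; when $\gamma_i$ separates, $[\gamma_i]=0$, so $\partial=0$, giving $H_2(X)\cong\Z^2$ and $H_1(X)\cong\Z^{2g}$. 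This is precisely the asserted table, together with the vanishing in degrees $\geq 3$.

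Third, for the final assertion I would identify both sides with the same explicit quotient of $H_1(\Sigma_g;\Z)$. By the Picard--Lefschetz formula the monodromy around $b_i$ acts on $H_1(\Sigma_g;\Z)$ by $v\mapsto v+\langle v,[\gamma_i]\rangle[\gamma_i]$, so $v-\tau_{\gamma_i}(v)$ ranges over all integer multiples of $[\gamma_i]$ when $\gamma_i$ is non-separating (using unimodularity of the intersection form, so that $\langle\,\cdot\,,[\gamma_i]\rangle$ is surjective onto $\Z$ precisely when $[\gamma_i]$ is primitive) and over $0$ when $\gamma_i$ separates. Since $\pi_1(Y)$ is generated by the loops encircling the punctures, the coinvariant module is
$$ H_1(\Sigma_g;\Z)_{\pi_1(Y)}\cong H_1(\Sigma_g;\Z)\big/\langle [\gamma_1],\dots,[\gamma_m]\rangle. $$
On the other hand, the standard computation of the first homology of a Lefschetz fibration over $S^2$ (attaching the vanishing-cycle $2$-handles kills each $[\gamma_i]$; see \cite{Kas,OS}) yields $H_1(E_{\mathbf{z}};\Z)\cong H_1(\Sigma_g;\Z)/\langle[\gamma_1],\dots,[\gamma_m]\rangle$. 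Comparing the two presentations identifies them, as claimed.

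The main obstacle is the first step: pinning down the local homotopy type of $\pi^{-1}(D_i)$ rigorously from the normal form $\pi(z_1,z_2)=z_1z_2$, i.e.\ justifying the single $2$-handle attachment (or, equivalently, the deformation retraction onto the nodal fiber). Once that model is secured, the remaining work is routine homological bookkeeping; the only point needing care there is the dichotomy between separating and non-separating $\gamma_i$, which is governed entirely by whether $[\gamma_i]$ is null-homologous or primitive, and which is also what makes the separating vanishing cycles invisible to $H_1(E_{\mathbf{z}})$.
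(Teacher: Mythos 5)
Your proposal is correct and follows essentially the same route as the paper, which simply cites the $2$-handle attachment over each singular value (for the local homology computation) and a Mayer--Vietoris argument (for the identification of $H_1(\Sigma_g;\Z)_{\pi_1(Y)}$ with $H_1(E_{\mathbf{z}};\Z)$); your long exact sequence of the pair $(\Sigma_g\cup_{\gamma_i}e^2,\Sigma_g)$ and your Picard--Lefschetz description of the coinvariants are just these two steps written out in detail. The handle-attachment model you flag as the "main obstacle" is exactly the standard fact the paper delegates to \cite[\S 1]{Ozb}, so no gap remains.
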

\begin{proof} The former claim follows from
observing the 2-handle attaching associated with the singular value $b_i$ (see \cite[\S 1]{Ozb} for details).
In addition,
it can be easily seen that a Meyer-Vietoris argument
deduces the latter part on the homology $H_1(E_{\mathbf{z}};\Z) $.
\end{proof}
\subsection*{Acknowledgment}
The author expresses his gratitude to Naoyuki Monden, 
Kenta Hayano, Hisaaki Endo, and
Yoshihisa Sato for useful discussions on Lefschetz fibrations.
He is also grateful to R. Inanc Baykur and Hakho Choi for useful comments.

\appendix
\section{The fundamental quandle of the 2-sphere with $m$-points }\label{s3s3wsd}
We will show that the Hopf fibration $\mu: S^3 \ra S^2 $
yields a quandle isomorphism
between some fundamental quandles (Proposition \ref{isnsdvdef}).
This discussion is analogous to the Dirac belt trick in physical understanding.

We begin by reviewing quandles and giving examples.
A {\it quandle} \cite{Joy} is a set, $X$, is a non-empty set with a binary operation $(x, y) \rightarrow x\lhd y$ such that,
for any $x,y,z \in X,$ the equalities $ x \lhd x = x$ and $(x\lhd y)\lhd z = (x\lhd z)\lhd(y \lhd z)$ hold and
there exists uniquely $w \in X$ satisfying $w \lhd y = x$.
\begin{exa}[Conjugacy quandle]\label{conjex}
Every union of some conjugacy classes of a group $G$ is a quandle with the conjugacy operation $x\tri y:=y^{-1}xy$ for any $x,y \in G$.
For example, the subsets $\D, \ \DD $ of $\M_g$ in Example \ref{LFrei} are regarded as 
conjugacy quandles. 
\end{exa}
\begin{exa}[{Fundamental quandle \cite{Joy}}]\label{Dd}
Let $M$ be a connected oriented $C^{\infty}$-manifold without boundaries, and $N\subset M$ a submanifold of codimension $2$.
For simplicity, assume the case where a tubular neighborhood of $N \subset M$ is diffeomorphic to $D^2 \times N $, where $D^2$ is the 2-disk.
Fix two points $p_0 \in M\setminus N$ and $d_0 \in \partial D^2$.
We define $Q(M,N)$ to be the set of homotopy classes of all pairs $(D^2 \times \{y\}, \alpha)$, where $y$ runs over $N$ and
$\alpha$ is a path in $M \setminus N$ starting from $p_0$ and ending at $d_0$ (see, e.g., Figure \ref{tg2}).
The set $Q(M,N)$ carries a quandle structure, called {\it fundamental quandle}, with an operation defined by
\[ [(D_1 \times \{ y_1\} , \alpha_1)]\tri [(D_2 \times \{ y_2 \}, \alpha_2)] = [(D_1 \times \{ y_1\}, \alpha_1 \cdot_{p_0} \alpha_2^{-1} \cdot_{d_0} \overline{\partial D_2} \cdot_{d_0} \alpha_2 )]. \]
Here the symbol $\overline{\partial D}$ is the path obtained from the boundary $ \partial D$ by cutting at $d_0$, and
the symbol $\cdot_{y}$ means the connection of two paths at a point $y$. See \cite[\S 12--15]{Joy} for details.\end{exa}
\noindent
As examples, let $\mm$ be the set of $m$-points $ \{b_1, \dots ,b_m \}$ in $S^2$ with the same orientations.
Then the Hopf fibration induces a quandle homomorphism $ Q(S^3 ,T_{m,m}) \ra Q(S^2 ,\mm )$. Moreover
\begin{prop}\label{isnsdvdef} The Hopf fibration $\mu$ induces a quandle isomorphism $ Q(S^3 ,T_{m,m}) \cong Q(S^2 ,\mm )$.
\end{prop}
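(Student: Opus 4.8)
The plan is to show that the quandle homomorphism $\mu_*: Q(S^3, T_{m,m}) \to Q(S^2, \mm)$ already produced above from the Hopf map is a bijection on underlying sets; since any bijective quandle homomorphism is automatically a quandle isomorphism (its set-inverse is again a quandle homomorphism), this suffices. The geometric input is the trivialization $S^3 \setminus T_{m,m} \cong S^1 \times (S^2 \setminus \{b_1, \dots, b_m\})$ recorded just before Proposition \ref{aa51433}, which says that $\mu$ restricts to a \emph{trivial} $S^1$-bundle $\mu: S^3 \setminus T_{m,m} \to S^2 \setminus \{b_1, \dots, b_m\}$. I would first check that $\mu$ carries a meridian disk of the $i$-th component of $T_{m,m}$ diffeomorphically onto a meridian disk of $b_i$: near a fibre the Hopf map is the projection $S^1 \times D^2 \to D^2$, so the normal $D^2$ to a fibre maps isomorphically onto a $2$-disk neighbourhood of $b_i$ in $S^2$, taking the basepoint $d_0$ on its boundary to a corresponding boundary point. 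Thus $\mu$ genuinely sends lassos to lassos in the sense of Example \ref{Dd}.

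For surjectivity I would fix a section $s: S^2 \setminus \{b_1,\dots,b_m\} \to S^3 \setminus T_{m,m}$ of the trivial bundle. Given a representative $(D^2 \times \{b_i\}, \beta)$ of a class in $Q(S^2, \mm)$, the path $\beta$ lifts through $s$ to a path $\tilde\beta$ ending on the boundary of the meridian disk of the $i$-th component of $T_{m,m}$, producing a lasso of $S^3 \setminus T_{m,m}$ whose image under $\mu_*$ is the original class. Hence $\mu_*$ is onto.

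The crux — and the step I expect to be the main obstacle — is injectivity, which is precisely where the fibre $S^1$ (equivalently, the central $\Z \subset \pi_1(S^3 \setminus T_{m,m}) \cong \Z \times F_{m-1}$ of Proposition \ref{aa51433}) must be shown to be invisible to the quandle. Suppose two lassos $\ell_1, \ell_2$ of $S^3 \setminus T_{m,m}$ satisfy $\mu_*(\ell_1) = \mu_*(\ell_2)$. Then their projected paths are homotopic rel endpoints in $S^2 \setminus \{b_1, \dots, b_m\}$ and abut the same $b_i$, so $\ell_1$ and $\ell_2$ can differ only in how many times their lifts wind around the fibre circle. The essential point is that such fibre-winding is undone inside $Q(S^3, T_{m,m})$: because the $i$-th component of $T_{m,m}$ is a \emph{circle} (the full $\mu$-fibre over $b_i$), one is permitted, in the defining homotopies of Example \ref{Dd}, to slide the meridian disk $D^2 \times \{y\}$ with $y$ running once around that fibre circle. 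Tracking the endpoint of the path $\alpha$ throughout this slide shows that concatenating $\alpha$ with one fibre loop returns the same lasso class; this is exactly the Dirac-belt-trick phenomenon mentioned at the start of the appendix. Since on the target $S^2 \setminus \{b_1,\dots,b_m\}$ there is no fibre and the $b_i$ are points that cannot be slid, no such ambiguity arises there, and we conclude $\ell_1 = \ell_2$.

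Together, surjectivity and injectivity make $\mu_*$ a bijective quandle homomorphism, hence the desired isomorphism $Q(S^3 ,T_{m,m}) \cong Q(S^2 ,\mm )$. As a consistency check I would compare associated groups: both quandles then have $\As$ equal to $\Z \times F_{m-1}$, the group presented in \eqref{aa13}, the central $\Z$ surviving in $\As$ precisely because the closing relation $a_1 \cdots a_m = 1$ — valid in $\pi_1(S^2 \setminus \{b_1,\dots,b_m\}) = F_{m-1}$ but not in $\pi_1(S^3 \setminus T_{m,m})$ — is a group relation with no quandle counterpart, so it cannot obstruct the isomorphism. An alternative, more combinatorial route would be to write down Wirtinger-type quandle presentations of both sides, from the diagram $D$ of Figure \ref{tg222} and from the $m$ points in $S^2$, and to observe that they agree generator-for-generator and relation-for-relation; but the fibration argument above is cleaner and makes the role of the Hopf fibre transparent.
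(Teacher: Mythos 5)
Your argument is correct in substance, but it follows a genuinely different route from the paper's. The paper never tries to prove bijectivity of $\mu_*$ geometrically: it writes down the Wirtinger presentation of $Q(S^3,T_{m,m})$ (generators $a_i$, relations \eqref{rel}), then computes $Q(S^2,\mm)$ by van Kampen --- $Q(D^2,\mm)$ is the free quandle on the $\overline{c_i}$, and passing from $D^2$ to $S^2$ imposes exactly the relation of Figure \ref{tg2}, which is again \eqref{rel} --- and concludes by matching the two presentations generator-for-generator. That is precisely the ``alternative, more combinatorial route'' you sketch and set aside in your last sentence. What your bundle-theoretic argument buys is transparency about \emph{why} the presentations agree: surjectivity comes for free from a section of the trivial $S^1$-bundle, and injectivity reduces to the single geometric fact that sliding the meridian disk once around the $i$-th component (which is the fibre $\mu^{-1}(b_i)$) concatenates the tail of the lasso with a longitude, and that this longitude is exactly the central fibre class generating $\Ker\bigl(\pi_1(S^3\setminus T_{m,m})\to\pi_1(S^2\setminus\mm)\bigr)\cong\Z$ of Proposition \ref{aa51433}. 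Two small points deserve more care than you give them: equality in $Q(S^2,\mm)$ is a homotopy of \emph{pairs}, not merely of paths rel endpoints (the endpoint may travel around $\partial D^2$, contributing meridian factors), and the homotopy downstairs must be lifted through the bundle before you can assert that $\ell_1,\ell_2$ differ only by fibre winding; both issues are harmless because the peripheral subgroup upstairs contains the meridian as well as the fibre longitude, but they should be said. Your closing consistency check is also correct and worth keeping: both associated groups are $\Z\times F_{m-1}$, since the closing relation $a_1\cdots a_m=1$ of $\pi_1(S^2\setminus\mm)$ only forces centrality, not triviality, at the level of $\As$ --- which is exactly the point of Remark \ref{aa5312}.
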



To show this,
we consider the diagram of $T_{m,m}$ in the left hand side of Figure \ref{tg25}.
Then, 
following from Wirtinger presentation for quandles (see \cite[\S 15]{Joy}), the fundamental quandle $Q(S^3 , T_{m,m})$ is presented by
\[ {\rm generators}: \ \ \ \ \ \ a_i \ \ \ \ \ \ \ \ \ \ \ \ (i \in \Z), \]
\begin{equation}\label{rel} {\rm relations}:\ \ \ \ \ \ (\cdots (a_i \tri a_{i+1}) \tri \cdots )\tri a_{i+m-1}= a_i, \ \ \ \ \ \ \ a_i=a_{m+i} \ \ \ \ \ (i \in \Z).\end{equation}
Here, for $1 \leq i \leq m$, the generator $a_i$ is represented by the path in the left of Figure \ref{tg25}.
Then the image $c_i := \mu (a_i) \in Q(S^2,\mm) $
is illustrated as the path shown in the right of Figure \ref{tg25}.

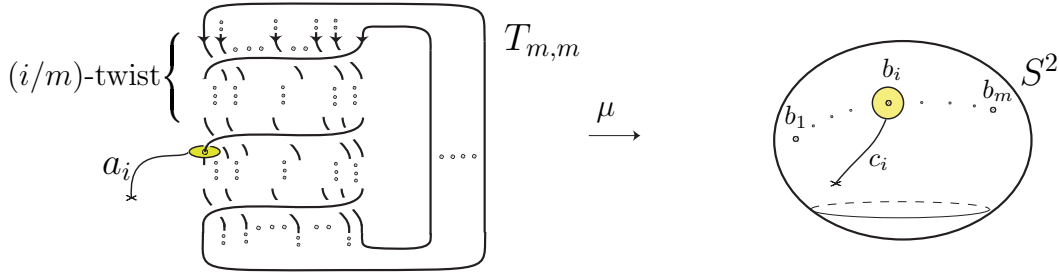
\begin{figure}[h]
$$
\begin{picture}(220,100)
\put(-75,45){\pc{mutorus2}{0.2755}}

\put(-79,35){\Large $a_i$}

\put(73,82){\Large $T_{m,m}$}
\put(268,69){\Large $S^2 $}

\put(-115,69){\large $(i/m)$-twist}

\put(-59,55){\normalsize \rotatebox{90}{$ \overbrace{ \ \ \ \ \ \ \ \ \ \ }^{\ }$ }}

\put(108,57){\large $\mu$}

\put(254,64){\normalsize $b_m $}
\put(216,72){\normalsize $b_i$}
\put(179,54){\normalsize $b_1$}

\put(211,38){\normalsize $c_i$}

\end{picture}
$$

\vskip -0.5pc
\caption{\label{tg25} The quandle homomorphism induced from the Hopf fibration $\mu$.}
\end{figure}
\begin{proof}[Proof of Proposition \ref{isnsdvdef}] We will examine the image $Q(S^2 ,\mm)$ for more details.
Choose a disk $D^2 $ in $S^2$ containing the points $\mm $.
The inclusion $D^2 \subset S^2$ induces a quandle epimorphism $\kappa: Q(D^2 ,\mm) \ra Q(S^2 ,\mm)$ by definitions.
Consider the element $\overline{c_i} \in Q(D^2 ,\mm)$ drawn in the left of Figure \ref{tg2}.
By van-Kampen theorem for the fundamental quandles (see \cite[Theorem 13.1]{Joy}),
$ Q(D^2 ,\mm) $ is the free quandle generated by $\overline{c_i}$ (shown by induction on $m$),
and the kernel of $\kappa$ is generated by the equivalences in Figure \ref{tg2}.
The kernel corresponds to the relation in \eqref{rel};
thus the presentations of $Q(S^3 ,T_{m,m})$ and $Q(S^2 ,\mm) $ imply the desired bijectivity of $\mu$.
\end{proof}
\vskip -2.44pc
\begin{figure}[h]
$$
\begin{picture}(220,100)
\put(-46,35){\pc{mmdisk}{0.3755}}

\put(66,50){\normalsize $b_m $}
\put(22,51){\normalsize $b_i$}
\put(-21,50){\normalsize $b_1$}

\put(78,62){\Large $D^2 $}
\put(268,62){\Large $D^2 $}

\put(31,18){\large $\overline{c_i}$}

\end{picture}
$$

\vskip -1.3944pc
\caption{\label{tg2} The relation in $ Q(D^2,\mm)$.}
\end{figure}

\vskip -3.00944pc

\

\section{Comparison with Meyer 2-cocycles and signature}\label{asss303k1}
This appendix compares Theorem \ref{akdsj} and some results of Meyer \cite{Meyer}, in particular, a group 2-cocycle of the $\M_g$.

We start a brief review of some results of Meyer \cite{Meyer} (See also \cite[Appendix A]{EN}).
Identifying the homology $M:= H^1(\Sigma_g;\R )$ with $\R^{2g}$,
let $\M_g$ act on this $ \R^{2g}$ as the symplectic representation.
For arbitrary two elements $A,B \in\M_g $, we consider the subspace
$$ V_{A,B} := \bigl\{ \ (x, y) \in \R^{2g} \times \R^{2g} \ \bigl| \ \ \ x \cdot (1- A ) + y \cdot (B - 1 ) = 0 \ \bigr\} $$
of the real vector space $\R^{2g} \times \R^{2g}$.
Recalling the symplectic form $ \omega :(\R^{2g})^2 \ra \R $,
let us define a bilinear form $\langle , \rangle_{A,B} : (V_{A,B} )^2 \ra \R$
by setting $ \langle (x_1, y_1 ), (x_2,y_2)\rangle_{A,B} := \omega \bigl( x_1 + y_ 1, \ y_2 \cdot (1 - B) \bigr) , $
We easily see this form $\langle , \rangle_{A,B}$ is symmetric; hence, we can define a map
$$ c_{ \M_g} : \M_g \times \M_g \lra \Z ; \ \ \ (A,B) \longmapsto \mathrm{Sing}( V_{A,B}, \ \langle , \rangle_{A,B} \ ). $$
Then Meyer showed that $c_{ \M_g}$ is a normalized group 2-cocycle of $\M_g$.

We now explain a comparison with the bilinear form $ \mathcal{Q}_{\omega}$ in Definition \ref{aa11c}.
To this aim, 
\begin{prop}\label{a12}
Let $G=Z= \M_g$, and let $M:= H^1(\Sigma_g;\R ) $.
For $A,B \in\M_g $, consider a 3-tuple $\mathbf{z} = (A,B, B^{-1}A^{-1}) \in Z^3$.
Then the splitting surjection $ M^3 \ra M^2$ which sends $(a,b,c)$ to $(a-b,b-c)$
gives rise to an isomorphism of bilinear forms
$$ \bigl( \Ker (\Gamma_{\mathbf{z}}) , \ \mathcal{Q}_{\omega} \bigr) \cong \bigl( V_{A,B} , \ \langle , \rangle_{A,B} \ \bigr) \oplus (M, 0). $$
\end{prop}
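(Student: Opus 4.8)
The plan is to specialize the general formulae of Definition~\ref{aa11c} to the case $m=3$ with $\mathbf{z}=(A,B,B^{-1}A^{-1})$ and then match the resulting data against Meyer's pair $(V_{A,B},\langle\,,\,\rangle_{A,B})$ term by term. First I would record that $z_1z_2z_3=AB(B^{-1}A^{-1})=1_G$, so by Example~\ref{exa1} the hypothesis $e_{z_1}e_{z_2}e_{z_3}=\mathrm{id}_M$ of Lemma~\ref{pro2331} and Proposition~\ref{aa1133c} holds. The case $c=1$ of Lemma~\ref{pro2331} then gives $\Ker(\Gamma_{\mathbf{z}})=\Ker(\Gamma_{\mathbf{z},1})$, so it suffices to handle the single map $\Gamma_{\mathbf{z},1}$. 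Writing out \eqref{aac} for $m=3$, $k=1$ and collecting terms by the differences $u:=x_1-x_2$ and $v:=x_2-x_3$, I get $\Gamma_{\mathbf{z},1}=u\cdot(e_{z_2}e_{z_3}-1)+v\cdot(e_{z_3}-1)$; since $e_{z_2}e_{z_3}$ and $e_{z_3}$ act through $z_2z_3=A^{-1}$ and $z_3=B^{-1}A^{-1}$, the kernel is cut out by a single $M$-valued linear relation in $(u,v)$.

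Next I would extract the module-level statement. The surjection $(a,b,c)\mapsto(a-b,b-c)$ is exactly $(x_1,x_2,x_3)\mapsto(u,v)$; it is split, with kernel the diagonal $\mathrm{Diag}(M)$. Substituting $u=x_1-x_2,\ v=x_2-x_3$ into the kernel relation (and, if convenient, applying the invertible $z_1$ to clear the common factor) matches it with the defining equation $x\cdot(1-A)+y\cdot(B-1)=0$ of $V_{A,B}$, after the symplectic-action convention of \S\ref{ssg2} is normalized to that of the Meyer appendix. Thus the surjection carries $\Ker(\Gamma_{\mathbf{z}})$ onto $V_{A,B}$ with kernel $\mathrm{Diag}(M)$, and since Proposition~\ref{aa1133c} makes the diagonal injection $M\to\Ker(\Gamma_{\mathbf{z}})$ split, this yields the underlying decomposition $\Ker(\Gamma_{\mathbf{z}})\cong\mathrm{Diag}(M)\oplus V_{A,B}$. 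A dimension count ($2g+2g$ for generic $A,B$) serves as a consistency check.

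It then remains to compare the forms. For the $(M,0)$-summand nothing is needed: Proposition~\ref{pprop1} already gives $\mathcal{Q}_\omega(\mathbf{x},\mathbf{y})=\mathcal{Q}_\omega(\mathbf{y},\mathbf{x})=0$ whenever $\mathbf{x}\in\mathrm{Diag}(M)$, so the diagonal is $\mathcal{Q}_\omega$-orthogonal to everything and carries the zero form. On the complementary summand I would expand $\mathcal{Q}_\omega=\mathcal{Q}_{\omega,1}$ from \eqref{bbbdd} for $m=3$, obtaining the two-term expression $\omega\big(u,\,y_2(1-e_{z_2}^{-1})\big)+\omega\big(v+u\cdot e_{z_2},\,y_3(1-e_{z_3}^{-1})\big)$, and then collapse it to Meyer's single term $\omega(x_1+y_1,\,y_2(1-B))$. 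The admissible tools are the kernel relation applied to the second variable together with the $\As(Z)$-invariance $\omega(x\cdot e_z,y\cdot e_z)=\omega(x,y)$ of $\psi=\omega$; Proposition~\ref{procol31}(II) guarantees in advance that the result is symmetric, as is $\langle\,,\,\rangle_{A,B}$.

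I expect this last reduction to be the main obstacle. Fusing the two $\omega$-terms into the single Meyer term forces one to use the kernel relation to eliminate the ``absolute'' component $y_3$ and to transport the factors $e_{z_2}^{\pm1},e_{z_3}^{\pm1}$ across $\omega$ by invariance, and each such move must be reconciled with the coordinates identifying the image of the surjection with $V_{A,B}$. The genuinely delicate part is convention matching: the precise matrix of the symplectic action on $M=H^1(\Sigma_g;\R)$ (equivalently, the placement of inverses such as $B$ versus $B^{-1}$) must be pinned down so that the kernel relation coincides with the defining relation of $V_{A,B}$ and the two symmetric forms agree on the nose, including the overall sign, recalling that $\mathcal{Q}_{\psi,\ell}$ in Proposition~\ref{aa1133c} is only defined up to sign. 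As a conceptual safeguard I would cross-check the identity against Proposition~\ref{aa1133c}: both $\mathcal{Q}_\omega$ and Meyer's form are the cup-product pairing on $H^1(Y,\partial Y;M)$ for $Y$ the thrice-punctured sphere with the symplectic local system, so their agreement is geometrically expected and the algebra should only make it explicit.
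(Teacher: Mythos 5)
Your proposal is correct and matches the paper's approach: the paper's entire proof of Proposition \ref{a12} is ``Straightforward from the definitions,'' and your plan is precisely a careful unwinding of Definition \ref{aa11c} at $m=3$ (including the correct two-term expansion of $\mathcal{Q}_{\omega,1}$ and the kernel relation $u\cdot(e_{z_2}e_{z_3}-1)+v\cdot(e_{z_3}-1)=0$), with the convention-matching against $V_{A,B}$ as the only remaining bookkeeping. The cup-product cross-check you mention is exactly the geometric interpretation the paper records immediately after its one-line proof.
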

\begin{proof}Straightforward from the definitions.
\end{proof}
\noindent
Here we give a geometric interpretation of the isomorphism.
Let $Y$ be the 2-sphere with two 2-disks removed, and
and consider the action $\pi_1(Y) \curvearrowright H^1(\Sigma_g;\R )=M $ similar to the proof of Theorem \ref{akdsj} (see \S \ref{ss2092}).
Meyer \cite{Meyer} showed, using the simplicial cohomology,
that the space $V_{A,B} $ is isomorphic to $ H^1 (Y, \partial Y;M)$,
and that the bilinear form $\langle , \rangle_{A,B} $ coincides with the cup product in the $E_2$-term that we discussed in the proof with $m=3$.
Hence Proposition \ref{aa1133c} readily ensures the desired isomorphism.

Furthermore, we will compare our formulation of signature (Theorem \ref{akdsj}) with
results \cite{EN,Ozb}. 
According to Hopf theorem in group cohomology,
Endo and Nagami showed
$$ \mathrm{Sign } (E)-m+ m_{\rm ns} = \sum_{i=2}^{m} c_{ \M_g}(z_1 \cdots z_{i-1},z_i ), $$
for any Lefschetz fibration $E \ra S^2$ associated with any tuple $(z_1, \dots, z_m)$; or see \cite[Theorem 3]{Ozb}.
However, in general it is hard to compute signatures in $m$-times.
In conclusion, our formula in Theorem \ref{akdsj} is more applicable to computer program
than their formulae.
In computer experience, for $g \leq 9$, we
can easily compute signatures of many Lefschetz fibrations.
Actually, the signatures in Table \ref{G3} can be also computed in this way.

\vskip 1pc

\normalsize

Faculty of Mathematics, Kyushu University, 
744, Motooka, Nishi-ku, Fukuoka, 819-0395, Japan

\

E-mail address: {\tt nosaka@math.kyushu-u.ac.jp}

\end{document}